\newcommand{\baton}[1]{\mathbb #1}
\newcommand{\C}{{\baton C}}
\newcommand{\E}{{\baton E}}
\newcommand{\N}{{\baton N}} 
\newcommand{\R}{{\baton R}}     
\newcommand{\T}{{\baton T}}
\newcommand{\Z}{{\baton Z}}
\newcommand{\CC}{{\mathcal C}} 
\newcommand{\CD}{{\mathcal D}}
\newcommand{\CF}{{\mathcal F}}
\newcommand{\CH}{{\mathcal H}}
\newcommand{\CI}{{\mathcal I}}
\newcommand{\CP}{{\mathcal P}}    
\newcommand{\CQ}{{\mathcal Q}}  
\newcommand{\CS}{{\mathcal S}}
\newcommand{\bt}{{\mathbf t}}
\newcommand{\bx}{{\mathbf x}}
\newcommand{\bu}{{\mathbf u}}
\newcommand{\bj}{{\mathbf j}}
\newcommand{\bzero}{{\vec 0}}
\newcommand{\vt}{{\vec{t}}}
\newcommand{\vs}{{\vec{s}}}
\newcommand{\vu}{{\vec{u}}}
\newcommand{\valpha}{{\vec{\alpha}}}
\newcommand{\vepsilon}{{\vec{\epsilon}}}
\newcommand{\veta}{{\vec{\eta}}}
\newcommand{\vtheta}{{\vec{\theta}}}
\newcommand{\norm}[1]{\lVert #1\rVert}
\newcommand{\nnorm}[1]{\lvert\!|\!| #1|\!|\!\rvert}
\newcommand{\one}{{\boldsymbol 1}}
\newcommand{\inv}{^{-1}}
\newcommand{\dis}{\displaystyle}
\newcommand{\wh}{\widehat}
\newcommand{\wt}{\widetilde}
\DeclareMathOperator{\Proj}{P}
\DeclareMathOperator{\uap}{U\!A\!P}
\newtheorem{theorem}{Theorem}[section]
\newtheorem{lemma}[theorem]{Lemma}
\newtheorem{proposition}[theorem]{Proposition}
\newtheorem{corollary}[theorem]{Corollary}
\newtheorem{question}[theorem]{Question}
\newtheorem*{CSG}{Cauchy-Schwarz-Gowers Inequality}
\newtheorem*{inverse}{Inverse Theorem for Gowers Norms 
(Green, Tao, and Ziegler~\cite{GTZ})}
\newtheorem*{dualinverse}{Inverse Theorem, Dual Form}
\newtheorem*{dualinverse2}{Inverse Theorem, Reformulated Version}
\theoremstyle{definition}
\newtheorem{definition}[theorem]{Definition}
\theoremstyle{remark}
\newtheorem{remark}[theorem]{Remark}
\begin{document}

\title{A point of view on Gowers uniformity norms}
\author{Bernard Host}
\address{Laboratoire d'analyse et de math\'ematiques appliqu\'{e}es, 
Universit\'e Paris-Est Marne-la-Vall\'ee \& CNRS UMR 8050\\
5 Bd. Descartes, Champs sur Marne\\
77454 Marne la Vall\'ee Cedex 2, France}
\email{bernard.host@univ-mlv.fr}

\author{Bryna Kra}
\address{ Department of Mathematics, Northwestern University \\ 
2033 Sheridan Road \\Evanston, IL 60208-2730, USA} 
\email{kra@math.northwestern.edu}

\begin{abstract}

Gowers norms have been studied extensively both in the direct 
sense, starting with a function and understanding the associated norm, 
and in the inverse sense, starting with the norm and deducing properties 
of the function.  Instead of
focusing on the norms themselves, we study associated 
dual norms and dual functions.  Combining this study 
with a variant of the Szemer\'edi Regularity Lemma, we
give a decomposition theorem for dual functions, 
linking the dual norms to classical norms and 
indicating that the dual norm is easier to understand than the 
norm itself.  Using the dual functions, 
we introduce higher order algebras that are analogs of the 
classical Fourier algebra, which in turn can be used to 
further characterize the dual functions. 
\end{abstract}

\thanks{The first author was partially supported by the Institut Universitaire de France and the second author by NSF grant 0900873.}

\maketitle

\section{Introduction}

In his seminal work on Szemer\'edi's Theorem, 
Gowers~\cite{gowers} introduced uniformity norms $U(d)$ for 
each integer $d\geq 1$, now referred to as Gowers norms or Gowers uniformity norms, that have played an 
important role in the developments in additive combinatorics 
over the past ten years.  In particular, Green and Tao~\cite{GT} 
used Gowers norms as a tool in their proof that the primes contain 
arbitrarily long 
arithmetic progressions in the primes; shortly thereafter, they made a 
conjecture~\cite{GT3}, the Inverse Conjecture for the Gowers norms,  on the algebraic structures 
underlying these norms.  
Related seminorms were introduced by the 
authors~\cite{HK1} in the setting of ergodic theory,  and the ergodic 
structure theorem provided a source of motivation in the formulation of the Inverse Conjecture.  For each integer $d\geq 1$ and $\delta > 0$, 
Green and Tao introduce a class  
 $\CF(d,\delta)$ of ``$(d-1)$-step nilsequences 
of bounded complexity,'' which we do not define here, and the proof 
of the Inverse Conjecture was given:
\begin{inverse}
\label{th:inverse}
For each integer $d\geq 1$ and $\delta > 0$, there exists a constant 
$C = C(d, \delta)> 0 $ such that for every function $f$ on $\Z/N\Z$ 
with $|f|\leq 1$ and $\norm f_{U(d)}\geq \delta$, there exists $g\in\CF(d, \delta)$ 
with $\langle g;f\rangle \geq C$.
\end{inverse}
See also Szegedy's approach to the Inverse Conjecture, outlined 
in the announcement~\cite{CS} for the article~\cite{S}.

We are motivated by the work of Gowers in~\cite{gowers1}.  Several 
ideas come out of this work, in particular the motivation 
that algebra norms are easier to study.  
The Gowers norms $U(d)$ are classically defined
in $\Z/N\Z$, but we choose to work in a general compact abelian group.  
For most of the results presented here, we take care to 
distinguish between the group $\Z/N\Z$ and the interval $[1,\ldots, N]$,
of the natural numbers $\N$, whereas for applications in additive 
combinatorics, the results may be more 
directly proved without this separation. 
This is a conscious choice that 
allows us to separate what about Gowers norms is particular 
to  the combinatorics of $\Z/N\Z$ and what is more general.
Our point of view is  that of harmonic analysis, rather than combinatorial.

More generally, the Gowers norms can be defined on a nilmanifold.
This is particularly important in the ergodic setting where analogous
seminorms were defined by the authors in~\cite{HK1} in an arbitrary measure
space; these seminorms are exactly norms when the space is a
nilmanifold.  While we restrict ourselves to abelian groups in this
article, most of the results can be carried out in the more general
setting of a nilmanifold without significant changes.

Instead of focusing on the Gowers norms themselves, we study 
the associated dual norms that fit within this framework and the 
associated dual functions. Moreover, in the statement 
of the inverse theorem, and more generally in uses of the Gowers 
norms, one typically assumes that the functions are bounded by $1$.  
>From the duality point of view, instead we study functions in the dual space itself,
we can consider functions that are within a small  $L^{1}$ error from functions in this space.  This allows us to restrict ourselves 
to dual functions of functions in a certain $L^{p}$ class 
(Theorem~\ref{th:k}).
Moreover, we  rephrase the Inverse Theorem in terms of 
dual functions 
(see Section~\ref{subsec:def_dual_functions} for 
precise meanings of the term) in 
certain $L^p$ classes, and in this form the Gowers norms do not 
appear explicitly (Section~\ref{subsec:approx}).
This reformulates the Inverse Theorem more in a classical 
analysis context.

The dual functions allow us to introduce algebras of functions on the 
compact abelian group $Z$.  For $d=2$, this corresponds to the classical 
Fourier algebra.
Finding an interpretation for the higher order uniformity 
norms is hard and no analogs of Fourier analysis and simple 
formulas, such as Parseval, exist.  
For $d> 2$, the higher order Fourier algebra are analogs of 
the classical case of the Fourier algebra.  
These algebras allow us to further describe the dual 
functions.  Starting with a dual function of level $d$, we find that it lies 
in the Fourier algebra of order $d$, giving us information on its 
dual norm $U(d)^{*}$, and by an approximation result, we understand 
further the original function.

We obtain a result on compactness (Theorem~\ref{th:main}) of dual functions, 
by applying a variation of the classical Szemer\'edi Regularity Lemma.

\section{Gowers norms: definition and elementary bounds }

\subsection{Notation}
Throughout, we assume that 
$Z$ is a compact abelian group and let $\mu$ denote Haar measure on $Z$.
If $Z$ is finite, then $\mu$ is the uniform measure; the classical 
case to keep in mind is when $Z = \Z_{N}=\Z/N\Z$ and the measure 
of each element is $1/N$.  

All functions are implicitly assumed to be real valued.  When $Z$ is 
infinite, we also implicitly assume that all functions 
and sets are measurable.
For $1\leq p\leq\infty$, $\norm\cdot_p$ denotes the $L^p(\mu)$ norm; 
if there is a need to specific the measure, write 
$\norm\cdot_{L^{p}(\mu)}$
or $\norm\cdot_{L^{p}(Z)}$ when we wish to emphasize 
the space.

We fix an integer $d\geq 1$ throughout and the dependence 
on $d$ is implicit in all statements. 

We have various spaces of various dimensions: $1$, $d$, $2^{d}$.  
Ordinary letters $t$ are reserved for spaces of one dimension, 
vector notation $\vt$ for dimension $d$, and bold face 
characters $\bt$ for dimension $2^{d}$.

If $f$ is a function on $Z$ and $t\in Z$, we write $f_t$ for the 
function on $Z$ defined by
$$
 f_t(x)=f(x+t),
$$
where $x\in Z$.  
If $f$ is a $\mu$-integrable function on $Z$, we write 
$$
\E_{x\in Z}f(x)=\int f(x)\,d\mu(x)\ .
$$
We use similar notation for multiple integrals.
If $f$ and $g$ are functions on $Z$, we write
$$
 \langle f ;g\rangle =\E_{x\in Z}f(x)g(x),
$$
assuming that the integral on the right hand side is defined.

If $d$ is a positive integer, we set 
$$
 V_d=\{0,1\}^d.
$$
Elements of $V_d$ are written as 
$\vepsilon=\epsilon_1\epsilon_2\cdots\epsilon_d$, without commas or
parentheses. Writing $\vec 0 = 00\cdots 0\in V_d$, we set 
$$
 \wt V_d=V_d\setminus\{\bzero\}.
$$

For $\bx\in Z^{2^d}$, we write $\bx = (x_\vepsilon\colon \vepsilon\in
V_d)$.

 For $\vepsilon\in V_d$ and $\vt=(t_1,t_2,\dots,t_d)\in Z^d$ we 
write
$$
\vepsilon\cdot \vt=\epsilon_1t_1+\epsilon_2t_2+\cdots+\epsilon_dt_d\ .
$$

\subsection{The uniformity norms and the dual functions:
 definitions}
\label{subsec:def_dual_functions}

The \emph{uniformity norms}, or \emph{Gowers norms}, $\norm 
f_{U(d)}$, $d\geq 2$, of a function $f\in L^\infty(\mu)$ are defined 
inductively by
$$
 \norm f_{U(1)}=\bigl|E_xf(x)|
$$
and for $d\geq 2$,
$$
 \norm f_{U(d)}=\Bigl(E_t\norm{f.f_t}_{U(d-1)}^{2^{d-1}}\Bigr)^{1/2^{d}}.
$$
Note that $\norm \cdot_{U(1)}$ is not actually a norm.  (See~\cite{gowers} 
for more on these norms and~\cite{HK1} for a related 
seminorm in ergodic theory.)
If there is ambiguity as to the underlying 
group $Z$, we 
write $\norm\cdot_{U(Z,d)}$.  

These norms can also be defined by closed formulas:
\begin{equation}
\label{eq:def-norm}
 \norm f_{U(d)}^{2^d}=\E_{x\in Z,\; \vt\in Z^d}
\prod_{\vepsilon\in V_d}f(x+\vepsilon\cdot \vt).
\end{equation}

We can rewrite this formula. Let $Z_d$ be the subset of 
$Z^{2^d}$ defined by 
\begin{equation}
\label{eq:Zd}
 Z_d=\bigl\{ (x+\vepsilon\cdot \vt\colon\vepsilon\in V_d)
\colon x\in Z,\ \vt\in\Z^d\bigr\}.
\end{equation}
 This set can be viewed 
as the ``set of cubes of dimension $d$'' (see, for example, \cite{gowers} 
or~\cite{HK1}).  
It is easy to check that $Z_d$ is a closed subgroup of $Z^{2^d}$.
Let $\mu_d$ denote its 
Haar measure.  Then $Z_d$ is the image of
$Z^{d+1}=Z\times Z^d$ under the map $(x,\vt)\mapsto
(x+\vepsilon\cdot \vt\colon\vepsilon\in V_d)$. Furthermore, $\mu_d$ is the 
image of $\mu\times\mu\times\ldots\times\mu$ 
(taken $d+1$ times) under the 
same map. If $f_\vepsilon$, $\vepsilon\in V_d$, are functions in 
$L^\infty(\mu)$, then
$$
 \E_{x\in Z,\; \vt\in Z^d}\prod_{\vepsilon\in V_d}
f_\vepsilon(x+\vepsilon\cdot\vt)
=\int_{Z_d}\prod_{\vepsilon\in V_d}
f_\vepsilon(x_\vepsilon)\,d\mu_d(\bx).
$$
In particular, for $f\in L^\infty(\mu)$,
\begin{equation}
\label{eq:def-norm2}
\norm f_{U(d)}^{2^d}=\int_{Z_d}
\prod_{\vepsilon\in V_d}
f(x_\vepsilon)\,d\mu_d(\bx).
\end{equation}
Associating the coordinates of the set $V_d$ with 
the coordinates of the Euclidean cube, we have that 
the measure $\mu_d$ is invariant under permutations 
that are associated to the isometries of the Euclidean cube. 
These permutations act transitively on  $V_d$.

For $d=2$, by Parseval's identity we have that 
\begin{equation}
\label{eq:U2Parseval}
 \norm f_{U(2)}=\norm{\wh f}_{\ell^4(\wh Z)},
\end{equation}
where $\wh Z$ is the dual group of $Z$ and $\wh f$ is the Fourier 
transform of $f$.
For $d\geq 3$, no analogous simple formula is known and 
the interpretation of the Gowers uniformity norms is more 
difficult.  A deeper understanding of the higher order 
norms is, in part, motivation for the current work.

We make use of the ``Cauchy-Schwarz-Gowers Inequality'' (CSG)
used in the proof of the subadditivity of Gowers norms:
\begin{CSG}
\label{prop:CSG}
Let $f_\vepsilon$, $\vepsilon\in V_d$, be $2^d$ functions belonging 
to $L^\infty(\mu)$. Then
\begin{multline*}
\tag{CSG}
 \Bigl|\E_{x\in Z,\;\vt\in Z^d}f_\vepsilon(x+\vepsilon\cdot \vt)\Bigr|\\
 =
 \Bigl|\int_{Z_d} \prod_{\vepsilon\in V_d}
f_\vepsilon(x_\vepsilon)\,d\mu_d(\bx)\Bigr|
 \leq  \prod_{\vepsilon\in\{0,1\}^d}\norm{f_\vepsilon}_{U(d)}.
\end{multline*}
\end{CSG}

Applying the Cauchy-Schwarz-Gowers Inequality with half 
of the functions equal to $f$ and the other half equal to the constant $1$, 
we deduce that 
$$
\norm f_{U(d+1)}\geq\norm f_{U(d)}
$$ 
for every $f\in L^{\infty}(Z)$.

\begin{definition}
For $f\in L^\infty(\mu)$, define the \emph{dual function} $\CD_d f$ 
on $Z$ by
\begin{equation}
 \CD_df(x)=\E_{\vt\in Z^d}\prod_{\vepsilon\in \wt V_d}
f(x+\vepsilon\cdot \vt).
\end{equation}
\end{definition}
It follows from the definition that
\begin{equation}
\label{eq:norme_Dd}
 \norm f_{U(d)}^{2^d}=\langle\CD_d f;f\rangle.
\end{equation}

More generally, we define:
\begin{definition} 
If $f_{\vepsilon}\in L^{\infty}$ for $\epsilon\in\wt V_{d}$, we denote
\begin{equation}
\CD_{d}(f_{\vepsilon}\colon\vepsilon\in \wt V_{d})(x) = 
\E_{\vt\in Z^d}\prod_{\vepsilon\in \wt V_d}
f_\vepsilon(x+\vepsilon\cdot \vt).
\end{equation}
We call such a function the {\em cubic convolution product of 
the functions $f_{\vepsilon}$}.  
\end{definition}

There is a 
formal similarity between the cubic convolution product and 
the classic convolution product; for example, 
$$
\CD_{2}(f_{01}, f_{10}, f_{11})(x) = 
\E_{t_{1}t_{2}\in Z} f_{01}(x+t_1)f_{10}(x+t_2)f_{11}(x+t_1+t_2).
$$

\subsection{Elementary bounds}
For $\vepsilon\in V_d$ and $\alpha\in\{0,1\}$, 
we write $\vepsilon\alpha=\epsilon_1\dots\epsilon_d\alpha
\in V_{d+1}$, maintaining the convention that such elements 
are written without commas or parentheses.
Thus 
$$V_{d+1} = \{\vepsilon 0\colon\vepsilon\in V_{d}\}\cup
\{\vepsilon 1\colon\vepsilon\in V_{d}\}.
$$

The image of  $Z_{d+1}$ under each of the two natural projections on 
$Z^{2^d}$ is $Z_d$, and the image of the measure
$\mu_{d+1}$ under these projections is $\mu_d$.

\begin{lemma}
\label{lem:Ddf}
Let $f_\vepsilon$, $\vepsilon\in\wt V_d$, be $2^d-1$ functions 
in $L^\infty(\mu)$. Then for all $x\in Z$, 
\begin{equation}
\label{eq:boud_DProduct}
\Bigl|
\CD_{d}(f_{\vepsilon}\colon\vepsilon\in \wt V_{d})(x)\Bigr|
\leq
\prod_{\vepsilon\in \wt V_d}
\norm{f_\vepsilon}_{2^{d-1}}.
\end{equation}
In particular, for every $f\in L^\infty(\mu)$,
\begin{equation}
\label{eq:bouns_Dd}
\norm{\CD_d f}_\infty\leq\norm f_{2^{d-1}}^{2^d-1}.
\end{equation}
\end{lemma}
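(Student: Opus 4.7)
The plan is to prove the more general inequality~\eqref{eq:boud_DProduct} by induction on $d\geq 1$; the second bound~\eqref{eq:bouns_Dd} then follows immediately by taking $f_\vepsilon = f$ for all $\vepsilon\in\wt V_d$, since $|\wt V_d|=2^d-1$. For the base case $d=1$, the set $\wt V_1$ reduces to $\{1\}$, the definition gives $\CD_1 f_1(x) = \E_{t\in Z} f_1(x+t) = \int f_1\,d\mu$, and this is bounded in absolute value by $\norm{f_1}_1 = \norm{f_1}_{2^0}$, as required.

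For the inductive step, I would decompose $\vepsilon\in V_d$ as $\vepsilon = \vepsilon'\alpha$ with $\vepsilon'\in V_{d-1}$ and $\alpha\in\{0,1\}$, and write $\vt = (\vt',t_d)\in Z^{d-1}\times Z$. The index set $\wt V_d$ then partitions as $\{\vepsilon'0\colon\vepsilon'\in\wt V_{d-1}\}\sqcup\{\vepsilon'1\colon\vepsilon'\in V_{d-1}\}$, and since $\vepsilon\cdot\vt = \vepsilon'\cdot\vt' + \alpha\,t_d$, Fubini yields
\begin{equation*}
\CD_d(f_\vepsilon\colon\vepsilon\in\wt V_d)(x) = \E_{\vt'\in Z^{d-1}}\!\left[\prod_{\vepsilon'\in\wt V_{d-1}} f_{\vepsilon'0}(x+\vepsilon'\cdot\vt') \cdot \E_{t_d\in Z}\prod_{\vepsilon'\in V_{d-1}} f_{\vepsilon'1}(x+\vepsilon'\cdot\vt'+t_d)\right].
\end{equation*}
To the inner expectation, which is an average over $t_d$ of a product of $2^{d-1}$ functions, I would apply H\"older's inequality with every exponent equal to $2^{d-1}$ (so that $2^{d-1}\cdot(1/2^{d-1})=1$). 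By translation invariance of Haar measure, its absolute value is bounded by $\prod_{\vepsilon'\in V_{d-1}}\norm{f_{\vepsilon'1}}_{2^{d-1}}$, uniformly in $\vt'$ and~$x$.

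Pulling this uniform bound out of the outer expectation and moving absolute values inside reveals exactly $\CD_{d-1}(|f_{\vepsilon'0}|\colon\vepsilon'\in\wt V_{d-1})(x)$, to which the induction hypothesis applies and gives the further bound $\prod_{\vepsilon'\in\wt V_{d-1}}\norm{f_{\vepsilon'0}}_{2^{d-2}}$. Since $\mu$ is a probability measure, $\norm{\cdot}_{2^{d-2}}\leq\norm{\cdot}_{2^{d-1}}$, and combining the two products yields precisely $\prod_{\vepsilon\in\wt V_d}\norm{f_\vepsilon}_{2^{d-1}}$, closing the induction. The only real obstacle is the bookkeeping: one must verify that the split of $\wt V_d$ produces $(2^{d-1}-1)+2^{d-1}=2^d-1$ factors and that the exponent $2^{d-1}$ is dictated by the H\"older balance, but no genuine analytic difficulty appears.
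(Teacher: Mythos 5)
Your proof is correct and follows essentially the same route as the paper's: split the last coordinate of $\wt V_d$, apply H\"older with exponents $2^{d-1}$ to the inner average over the last variable, and invoke the induction hypothesis together with monotonicity of $L^p$ norms on a probability space for the remaining $\vepsilon'0$ factors. The only cosmetic difference is that you start the induction at the trivial case $d=1$ rather than at the explicit $d=2$ computation given in the paper, which changes nothing of substance.
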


\begin{proof}
Without loss, we can assume that all functions are nonnegative. 
We proceed by induction on $d\geq 2$. 

For  nonnegative $f_{01}, f_{10}$ and $f_{11}\in L^\infty(\mu)$,
\begin{align*}
\CD_{2}(f_{01}, f_{10},f_{11})(x) & = \E_{t_1\in Z} f_{01}(x+t_1)\E_{t_2\in Z}
f_{10}(x+t_2)f_{11}(x+t_1+t_2) \\
& \leq \E_{t_1\in Z} f_{01}(x+t_1) \norm{f_{10}}_{L^2(\mu)}
\norm{f_{11}}_{L^2(\mu)}\\
& \leq \norm{f_{01}}_{L^2(\mu)}
\norm{f_{10}}_{L^2(\mu)}
\norm{f_{11}}_{L^2(\mu)}.
\end{align*}
This proves the case $d=2$.  Assume that the result holds for 
some $d\geq 2$. Let $f_\vepsilon$, $\vepsilon\in\wt V_{d+1}$,
be nonnegative and belong 
to $L^{2^d(\mu)}$.  Then
\begin{multline*}
\CD_{d+1}(f_{\vepsilon}\colon\vepsilon\in\wt V_{d+1})(x)\\
= \E_{\vs\in \wt Z^d}
\Bigl(
\prod_{\veta\in \wt V_d} f_{\veta 0}(x+\veta\cdot \vs) 
\E_{u\in Z}\prod_{\vtheta\in V_d} f_{\vtheta 1}
(x+\vtheta\cdot \vs+u) \Bigr).
\end{multline*}
But, for every $\vs\in Z^d$ and every $x\in Z$, by the H\"older Inequality,
$$
\E_{u\in Z}\prod_{\vtheta\in V_d} f_{\vtheta 1}
(x+\vtheta\cdot \vs+u) \leq\prod_{\vtheta\in V_d}
\norm{f_{\vtheta 1}}_{2^d}.
$$
On the other hand, by the induction hypothesis, for every $x\in Z$, 
$$
 \E_{\vs\in \wt Z^d}
\prod_{\veta\in \wt V_d}f_{\veta 0}(x+\veta\cdot \vs)
\leq\prod_{\veta\in \wt V_d}\norm{f_{\veta 0}}_{2^{d-1}}
\leq\prod_{\veta\in \wt V_d}\norm{f_{\veta 0}}_{2^{d}}
$$
and~\eqref{eq:boud_DProduct} holds for $d+1$.
 \end{proof}

\begin{corollary}
\label{cor:boud_2d}
 Let $f_\vepsilon$, $\vepsilon\in V_d$, be $2^d$ 
functions belonging to $L^\infty(\mu)$. Then
\begin{equation}
\label{eq:boud_2d}
\Bigl|\E_{x\in Z, \vt\in Z^d}\prod_{\vepsilon\in V_d}
f_\vepsilon(x+\vepsilon\cdot \vt)\Bigr|
\leq
 \prod_{\vepsilon\in V_d}\norm{f_\vepsilon}_{2^{d-1}}.
\end{equation}
In particular, for $f\in L^\infty(\mu)$,
\begin{equation}
\label{eq:boud_Norm}
\norm f_{U(d)}\leq\norm f_{2^{d-1}}.
\end{equation}
\end{corollary}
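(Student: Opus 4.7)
The plan is to reduce the $2^d$-fold integral to a pairing between $f_{\vec 0}$ and a cubic convolution product of the remaining $2^d-1$ functions, then apply the bound from Lemma~\ref{lem:Ddf}. The key observation is that $\vec 0 \cdot \vt = 0$ for every $\vt$, so $f_{\vec 0}(x + \vec 0 \cdot \vt) = f_{\vec 0}(x)$ is independent of $\vt$. Thus
\[
\E_{x\in Z,\;\vt\in Z^d}\prod_{\vepsilon\in V_d}f_\vepsilon(x+\vepsilon\cdot\vt)
= \E_{x\in Z}\,f_{\vec 0}(x)\,\CD_d(f_\vepsilon\colon\vepsilon\in\wt V_d)(x)
= \bigl\langle f_{\vec 0};\CD_d(f_\vepsilon\colon\vepsilon\in\wt V_d)\bigr\rangle,
\]
by separating out the $\vec 0$-coordinate and recognizing the remaining average over $\vt\in Z^d$ as the cubic convolution product.

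Next I would apply the trivial bound $|\langle g;h\rangle|\leq \norm g_1\norm h_\infty$ together with Lemma~\ref{lem:Ddf} to obtain
\[
\Bigl|\E_{x\in Z,\;\vt\in Z^d}\prod_{\vepsilon\in V_d}f_\vepsilon(x+\vepsilon\cdot\vt)\Bigr|
\leq \norm{f_{\vec 0}}_1\cdot\prod_{\vepsilon\in\wt V_d}\norm{f_\vepsilon}_{2^{d-1}}.
\]
Since $\mu$ is a probability measure, Jensen's inequality (or equivalently Hölder against the constant $1$) gives $\norm{f_{\vec 0}}_1\leq \norm{f_{\vec 0}}_{2^{d-1}}$, and combining yields \eqref{eq:boud_2d}.

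For the second inequality \eqref{eq:boud_Norm}, I would simply specialize by taking all $f_\vepsilon=f$. The formula \eqref{eq:def-norm} expresses $\norm f_{U(d)}^{2^d}$ as precisely the left-hand side of \eqref{eq:boud_2d} (and this quantity is already nonnegative, so the absolute value is harmless), so the previous estimate gives $\norm f_{U(d)}^{2^d}\leq \norm f_{2^{d-1}}^{2^d}$, and taking $2^d$-th roots finishes the argument.

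There is no substantive obstacle: once Lemma~\ref{lem:Ddf} is in hand, the corollary is essentially a packaging statement, and the only mild subtlety is remembering that on a probability space the $L^1$ norm is dominated by any $L^p$ norm with $p\geq 1$, which is what allows the $2^d-1$ factors coming from the cubic convolution product to be upgraded to $2^d$ factors of equal weight.
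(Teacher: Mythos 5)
Your proof is correct and is precisely the argument the paper intends: the corollary is stated without proof because, exactly as you do, one separates the $\bzero$-coordinate, recognizes the remaining average as the cubic convolution product $\CD_d(f_\vepsilon\colon\vepsilon\in\wt V_d)$, and applies Lemma~\ref{lem:Ddf} together with $\norm{f_{\bzero}}_1\leq\norm{f_{\bzero}}_{2^{d-1}}$ on the probability space. The specialization $f_\vepsilon=f$ and the $2^d$-th root for~\eqref{eq:boud_Norm} are likewise the intended (and correct) conclusion.
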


By the corollary, the definition~\eqref{eq:def-norm} 
of the Gowers norm $U(d)$ 
can be extended by continuity to the space $L^{2^{d-1}}(\mu)$, 
and if $f\in L^{2^{d-1}}(\mu)$, then the integrals defining
$\norm f_{U(d)}$ in Equation~\eqref{eq:def-norm} exist and~\eqref{eq:boud_Norm}
holds.   Using similar reasoning, if 
$f_\vepsilon$, $\vepsilon\in V_d$, are $2^d$ functions 
belonging to $L^{2^{d-1}}(\mu)$, then the integral on the left hand side 
of~\eqref{eq:boud_2d} exists, Inequality CSG remains valid, 
and~\eqref{eq:boud_2d} holds.  If we have $2^{d-1}$ functions 
in $L^{2^{d-1}}(\mu)$, then Inequality~\eqref{eq:boud_DProduct} 
remains valid.  Similarly, the definitions and results extend to 
$\CD_{d}f$ and to cubic convolution products 
for functions  belonging to $L^{2^{d-1}}(\mu)$.

The bounds given here (such as~\eqref{eq:boud_Norm}) can be improved and made sharp.  
In particular, one can show that 
$$
\norm f_{U(d)}\leq \norm f_{2^{d}/(d+1)}
$$
and 
$$
\norm{\CD f}_{\infty}\leq \norm f_{(2^{d}-1)/d}^{2^{d}-1}.
$$
We omit the proofs, as they are not used in the sequel.

When $Z$ is infinite, we define the \emph{uniform space of level $d$} 
to be the completion of 
$L^\infty(\mu)$ under the norm $U(d)$.  
As $d$ increases, the corresponding uniform spaces shrink.
A difficulty is that the uniform
space may contain more than just functions.  
For example, if 
$Z=\T:=\R/\Z$, the uniform 
space of level $2$ consists of the distributions $T$ on $\T$
whose Fourier transform $\wh T$ satisfies
$\sum_{n\in\Z}|\wh T(n) |^4 <+\infty$.

\begin{corollary}
\label{cor:alpha}
Let $f_\vepsilon$, $\vepsilon\in V_d$, be $2^d$ functions 
on $Z$  and let $\valpha\in V_d$.  Assume that 
$f_\valpha\in L^1(\mu)$ and $f_\vepsilon\in L^{2^{d-1}}(\mu)$ for 
$\vepsilon\neq\valpha$. Then
$$
\Bigl|
\E_{x\in Z,\;\vt\in Z^d} 
\prod_{\vepsilon\in V_d}
f_\vepsilon(x+\vepsilon\cdot \vt)\Bigr|\leq
\norm{f_\alpha}_1
\prod_{\substack{\vepsilon\in V_d \\ \vepsilon\neq\valpha}}
\norm{f_\vepsilon}_{L^{2^{d-1}}(\mu)}.
$$
\end{corollary}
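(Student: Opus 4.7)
The plan is to reduce to the case $\valpha=\bzero$ by symmetry and then apply Lemma~\ref{lem:Ddf} together with a trivial $L^{1}$--$L^{\infty}$ H\"older estimate.

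First I would rewrite the left-hand side as an integral against $\mu_{d}$, namely
$$
\E_{x\in Z,\;\vt\in Z^{d}}\prod_{\vepsilon\in V_{d}}f_{\vepsilon}(x+\vepsilon\cdot\vt)
=\int_{Z_{d}}\prod_{\vepsilon\in V_{d}}f_{\vepsilon}(x_{\vepsilon})\,d\mu_{d}(\bx).
$$
Since $\mu_{d}$ is invariant under the permutations of coordinates coming from isometries of the cube, and since (as noted after equation~\eqref{eq:def-norm2}) these permutations act transitively on $V_{d}$, there is a measure-preserving permutation of the coordinates of $Z^{2^{d}}$ sending $\valpha$ to $\bzero$. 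Relabeling the $f_{\vepsilon}$ accordingly, it suffices to treat the case $\valpha=\bzero$.

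Assuming now $\valpha=\bzero$, I would disintegrate the expectation over the variable $x$: since $\bzero\cdot\vt=0$, the factor $f_{\bzero}(x)$ comes out of the $\vt$-average, leaving
$$
\E_{x,\vt}\prod_{\vepsilon\in V_{d}}f_{\vepsilon}(x+\vepsilon\cdot\vt)
=\E_{x\in Z}\Bigl(f_{\bzero}(x)\cdot
\CD_{d}(f_{\vepsilon}\colon\vepsilon\in\wt V_{d})(x)\Bigr).
$$
Applying the H\"older inequality with exponents $1$ and $\infty$ and then Lemma~\ref{lem:Ddf} to bound the sup-norm of the cubic convolution product by $\prod_{\vepsilon\in\wt V_{d}}\norm{f_{\vepsilon}}_{2^{d-1}}$, I obtain
$$
\Bigl|\E_{x,\vt}\prod_{\vepsilon\in V_{d}}f_{\vepsilon}(x+\vepsilon\cdot\vt)\Bigr|
\leq \norm{f_{\bzero}}_{1}\prod_{\vepsilon\in\wt V_{d}}\norm{f_{\vepsilon}}_{2^{d-1}},
$$
which is the desired inequality after relabeling.

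There is no real obstacle here; the only point requiring a touch of care is the symmetry reduction, i.e.\ making sure the cube isometries used act transitively on $V_{d}$ and preserve $\mu_{d}$, which is precisely the observation made in the paragraph following~\eqref{eq:def-norm2}. Once $\valpha=\bzero$, everything reduces to the $L^{\infty}$ bound on cubic convolution products already established in Lemma~\ref{lem:Ddf}. As remarked after Corollary~\ref{cor:boud_2d}, the required integrals are finite under the stated $L^{p}$ hypotheses, so the manipulations are justified.
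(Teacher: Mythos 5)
Your proof is correct and follows essentially the same route as the paper: rewrite the average as an integral over $Z_d$ against $\mu_d$, use the cube symmetries of $\mu_d$ to reduce to $\valpha=\bzero$, and then conclude from the pointwise bound of Lemma~\ref{lem:Ddf}. The only difference is that you spell out the final $L^1$--$L^\infty$ H\"older step, which the paper leaves implicit in ``follows immediately from Lemma~\ref{lem:Ddf}.''
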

\begin{proof}
The left hand side is equal to 
$$
\Bigl|
 \int_{Z_d} f_\valpha(x_\valpha)
\prod_{\substack{\vepsilon\in V_d \\ \vepsilon\neq\valpha}}
f_\vepsilon(x_\vepsilon)\, d\mu_d(\bx)\Bigr |
$$
Using the symmetries of the measure $\mu_d$, we can reduce to the 
case that $\valpha=\bzero$, and then the result follows 
immediately from Lemma~\ref{lem:Ddf}.  
\end{proof}

We note for later use:
\begin{lemma}
\label{prop:continuous}
For every $f\in L^{2^{d-1}}(\mu)$, $\CD_df(x)$ is a continuous function on 
$Z$.  

More generally, if $f_\vepsilon$, $\vepsilon\in\wt V_d$ are $2^d-1$ 
functions belonging to $L^{2^{d-1}}(\mu)$, then the cubic convolution 
product $\CD_{d}(f_{\vepsilon}\colon\vepsilon\in \wt V_{d})(x)$ is a 
continuous function on $Z$.  
\end{lemma}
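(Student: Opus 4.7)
The plan is to prove the more general statement about cubic convolution products, from which the statement about $\CD_df$ follows as the special case where every $f_\vepsilon$ equals $f$. The strategy is the classical one for bilinear/multilinear integrals: verify continuity for continuous arguments directly, and then use a density and uniform approximation argument to pass to $L^{2^{d-1}}(\mu)$.

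First I would handle the case of continuous inputs. Suppose each $f_\vepsilon$ is continuous on $Z$. Since $Z$ is a compact abelian group, each $f_\vepsilon$ is uniformly continuous, so for every $\delta>0$ there is a neighborhood $U$ of $0\in Z$ with $|f_\vepsilon(y+h)-f_\vepsilon(y)|<\delta$ whenever $h\in U$, uniformly in $y$ and in $\vepsilon$. For $h\in U$ and any $x\in Z$, $\vt\in Z^d$, the product $\prod_{\vepsilon\in\wt V_d}f_\vepsilon(x+h+\vepsilon\cdot\vt)$ differs from $\prod_{\vepsilon\in\wt V_d}f_\vepsilon(x+\vepsilon\cdot\vt)$ by at most a constant (depending on the $\sup$-norms of the $f_\vepsilon$) times $\delta$. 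Integrating over $\vt$ shows that $\CD_d(f_\vepsilon\colon\vepsilon\in\wt V_d)$ is (uniformly) continuous in $x$.

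Next I would remove the continuity assumption by approximation. Continuous functions are dense in $L^{2^{d-1}}(\mu)$ on a compact abelian group. So for each $f_\vepsilon\in L^{2^{d-1}}(\mu)$, pick sequences $g_\vepsilon^{(n)}$ of continuous functions with $\|g_\vepsilon^{(n)}-f_\vepsilon\|_{2^{d-1}}\to 0$. Using multilinearity of $\CD_d(\cdot\,;\dots;\cdot\,)$, write the difference $\CD_d(f_\vepsilon\colon\vepsilon\in\wt V_d)-\CD_d(g_\vepsilon^{(n)}\colon\vepsilon\in\wt V_d)$ as a telescoping sum of $2^d-1$ cubic convolution products, in each of which exactly one input is of the form $f_\vepsilon-g_\vepsilon^{(n)}$ and the other inputs are either $f_{\vepsilon'}$ or $g_{\vepsilon'}^{(n)}$. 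Applying the pointwise bound~\eqref{eq:boud_DProduct} from Lemma~\ref{lem:Ddf} (which we already noted extends to inputs in $L^{2^{d-1}}(\mu)$) to each summand gives
$$
\bigl\|\CD_d(f_\vepsilon\colon\vepsilon\in\wt V_d)-\CD_d(g_\vepsilon^{(n)}\colon\vepsilon\in\wt V_d)\bigr\|_\infty
\;\leq\;\sum_{\valpha\in\wt V_d}\|f_\valpha-g_\valpha^{(n)}\|_{2^{d-1}}\!\!\prod_{\vepsilon\neq\valpha}C_\vepsilon^{(n)},
$$
where the $C_\vepsilon^{(n)}$ are uniformly bounded in $n$ (say by $\|f_\vepsilon\|_{2^{d-1}}+1$ for $n$ large). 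Thus $\CD_d(g_\vepsilon^{(n)}\colon\vepsilon\in\wt V_d)$ converges to $\CD_d(f_\vepsilon\colon\vepsilon\in\wt V_d)$ in the uniform norm.

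Finally, each approximant $\CD_d(g_\vepsilon^{(n)}\colon\vepsilon\in\wt V_d)$ is continuous by the first step, so the uniform limit is continuous as well, which proves the lemma. The only slightly delicate point is the telescoping multilinear bound, but this is mechanical once Lemma~\ref{lem:Ddf} is in hand; I do not foresee a real obstacle.
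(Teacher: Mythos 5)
Your proof is correct and follows essentially the same route as the paper: both arguments rest on density in $L^{2^{d-1}}(\mu)$ together with a telescoping/multilinear application of the bound~\eqref{eq:boud_DProduct} to obtain uniform convergence of the cubic convolution products, so that continuity only needs to be verified for nice inputs. The sole difference is the base case: the paper stops the reduction at $|f_\vepsilon|\leq 1$ and concludes from continuity of translation in $L^1(\mu)$ via $|g(x)-g(y)|\leq\sum_{\vepsilon\in\wt V_d}\norm{f_{\vepsilon,x}-f_{\vepsilon,y}}_1$, whereas you reduce further to continuous $f_\vepsilon$ and use uniform continuity on the compact group; the two base cases are interchangeable.
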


\begin{proof}
By density and~\eqref{eq:boud_DProduct}, it suffices to prove the result when 
$f_\vepsilon\in L^\infty(\mu)$ for every $\vepsilon\in\wt V_d$.   
Furthermore, we can 
assume that $|f_\vepsilon| \leq 1$ for every $\vepsilon\in\wt V_d$.  
Let $g$ be the function on $Z$ defined in the statement.  For 
$x,y\in Z$, we have that 
$$
|g(x)- g(y) | \leq \sum_{\vepsilon\in\wt V_d}\norm{f_{\vepsilon,x} - 
f_{\vepsilon,y}}_1
$$ 
and the result follows.  
\end{proof}

\section{Duality}

\subsection{Anti-uniform spaces}
Consider the space $L^{2^{d-1}}(\mu)$ endowed with the norm $U(d)$.
By~\eqref{eq:boud_Norm}, the dual of this normed space can be viewed
as a subspace of $L^{2^{d-1}/(2^{d-1}-1)}(\mu)$, 
with  the duality given by the pairing 
$\langle\cdot ;\cdot\rangle$.
Following Green and Tao~\cite{GT}, we define
\begin{definition}
The \emph{anti-uniform space of level 
$d$} is defined to be the dual space of 
$L^{2^{d-1}}(\mu)$ endowed with the norm $U(d)$.  
Functions belonging to this space are called
\emph{anti-uniform functions of level $d$}. 
The norm on the anti-uniform space 
given by  duality is called the \emph{anti-uniform norm of level 
$d$} and is denoted by $\norm\cdot_{U(d)}^*$.
\end{definition}

Obviously, when $Z$ is finite, then every function on $Z$ is an 
anti-uniform function.
It follows from the definitions that 
$$\norm f_{U(d+1)}^{*}\leq \norm f_{U(d)}^{*}$$
for every $f\in L^{\infty}(Z)$, and thus as $d$ increases, 
the corresponding anti-uniform spaces increase.

More explicitly, a function $g\in L^{2^{d-1}/(2^{d-1}-1}(\mu)$ 
is an anti-uniform function of level $d$ if
$$
 \sup\bigl\{|\langle g ; f\rangle|\colon
f\in L^{2^{d-1}}(\mu),\ 
\norm f_{U(d)}\leq 1\bigr\} <+\infty
$$
and in this case, $\norm g_{U(d)}^*$ is defined to be equal to this 
supremum. 
Again, in case of ambiguity about the underlying space $Z$, 
we write $\norm\cdot_{U(Z,d)}^{*}$.
We conclude:

\begin{corollary}
\label{cor:anti}
For every anti-uniform function $g$ of level $d$,
$\norm g_{U(d)}^*\geq \norm g_{2^{d-1}/(2^{d-1}-1)}$.
\end{corollary}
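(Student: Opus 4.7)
The plan is to exploit the inclusion of unit balls: the bound $\norm f_{U(d)}\leq \norm f_{2^{d-1}}$ established in Corollary~\ref{cor:boud_2d} (equation~\eqref{eq:boud_Norm}) says that the unit ball of $L^{2^{d-1}}(\mu)$ sits inside the unit ball of the norm $U(d)$. Taking suprema of $|\langle g;f\rangle|$ over a larger set can only make the supremum larger, so the anti-uniform norm should dominate the dual-$L^p$ norm, which is exactly $\norm g_{2^{d-1}/(2^{d-1}-1)}$ by the standard $L^p$–$L^q$ duality.

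More concretely, I would first invoke the classical duality identity
\begin{equation*}
\norm g_{2^{d-1}/(2^{d-1}-1)}=\sup\bigl\{|\langle g;f\rangle|\colon f\in L^{2^{d-1}}(\mu),\ \norm f_{2^{d-1}}\leq 1\bigr\},
\end{equation*}
valid since $2^{d-1}/(2^{d-1}-1)$ and $2^{d-1}$ are H\"older conjugates. Then, using Corollary~\ref{cor:boud_2d}, every $f\in L^{2^{d-1}}(\mu)$ with $\norm f_{2^{d-1}}\leq 1$ automatically satisfies $\norm f_{U(d)}\leq 1$, so the set of admissible $f$ in the displayed supremum is contained in the set of admissible $f$ in the definition of $\norm g_{U(d)}^*$. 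Passing to suprema yields $\norm g_{2^{d-1}/(2^{d-1}-1)}\leq \norm g_{U(d)}^*$.

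There is really no obstacle here; the only thing worth double-checking is that $g$, being an anti-uniform function of level $d$, really does lie in $L^{2^{d-1}/(2^{d-1}-1)}(\mu)$, which is already built into the definition recalled just before the statement (the dual of $L^{2^{d-1}}(\mu)$ endowed with $\norm\cdot_{U(d)}$ is viewed as a subspace of $L^{2^{d-1}/(2^{d-1}-1)}(\mu)$). Hence both suprema are taken against the same pairing $\langle\cdot;\cdot\rangle$, and the argument is just two lines long.
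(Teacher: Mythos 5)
Your argument is correct and is exactly the one the paper intends: since $\norm f_{U(d)}\leq\norm f_{2^{d-1}}$ by~\eqref{eq:boud_Norm}, the $L^{2^{d-1}}$-unit ball lies inside the $U(d)$-unit ball, so the supremum defining $\norm g_{U(d)}^*$ dominates the $L^{2^{d-1}}$--$L^{2^{d-1}/(2^{d-1}-1)}$ duality pairing. The paper leaves this two-line comparison implicit ("We conclude"), and your write-up simply makes it explicit.
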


For $d=2$, the anti-uniform space consists in functions $g\in 
L^2(\mu)$ with 
$\norm{\wh g}_{\ell^{4/3}(\wh Z)}$ finite, and for these functions,
\begin{equation}
\label{eq:U2dual}
\norm g_{U(2)}^*=\norm{\wh g}_{\ell^{4/3}(\wh Z)}.
\end{equation}  
>From this example, 
we see that there is no bound for the converse direction of 
Corollary~\ref{cor:anti}.

The dual spaces allow us to give an equivalent reformulation of the Inverse Theorem in terms of dual norms: 
For each integer $d\geq 1$ and each $\delta > 0$, there exists a family 
of  ``$(d-1)$-step nilsequences of bounded 
complexity,'' which we do not 
define here, such that its convex hull $\CF'(d,\delta)$ satisfies

\begin{dualinverse}
\label{th:dualinverse}
For each integer $d\geq 1$ and each $\delta > 0$, 
every function $g$ on $\Z_{N}$ with $\norm g_{U(d)}^{*}\leq 1$ 
can be written as $g = h+\psi$ with $h\in\CF'(d, \delta)$ and $\norm \psi_{1}\leq
\delta$.  
\end{dualinverse}
\begin{remark}
In this statement, there is no hypothesis on $\norm g_\infty$, and the 
function $g$ is not assumed to be bounded.
\end{remark}

\begin{proof}
We show that this statement is equivalent to the Inverse Theorem.
First assume the Inverse Theorem and let $\CF = \CF(d,\delta)$ be the class of 
nilsequences and $C=C(d,\delta)$ be as in the formulation of  the 
Inverse Theorem.  Let 
$$
K =\wt{\CF} + B_{L^{1}(\mu)}(C),
$$
where $\wt\CF$ denotes the convex hull of $\CF$ and the 
second term is the ball in $L^{1}(\mu)$ of radius $C$.  
Let $g$ be a function with $g\leq C$ on $K$.  In particular, $|g|\leq 1$ and 
$g\leq C$ on $\CF$.  By the Inverse Theorem, we have that 
$\norm g_{U(d)}< \delta$.  By the Hahn-Banach Theorem, 
$K \supset B_{U(d)^{*}} (C/\delta)$.  Thus 
$$
B_{U(d)^{*}} (1)\subset  (\delta /C) \wt\CF + 
B_{L^{1}(\mu)}(\delta).
$$
Taking $\CF'(d,\delta)$ to be 
$(\delta/C)\wt\CF$, we have the statement.  

Conversely, we assume the Dual Form and prove the Inverse Theorem.
Say that $\CF' = \CF'(d,\delta/2)$ is the convex hull of 
$\CF_{0}=\CF_{0}(d,\delta)$.  Assume that $f$ satisfies $|f|\leq 1$ and 
$\norm f_{U(d)}\geq\delta$.  Then there exists 
$g$ with $\norm g_{U(d)}^{*}\leq 1$ and $\langle g;f\rangle \geq
\delta$.  By the dual version, there exists $h\in\CF'$ 
and $\psi$ with $\norm\psi_{1}< \delta/2$ such that $g = h+\psi$.  Since 
$$
\delta\leq \langle g;f\rangle = \langle h+\psi;f\rangle = 
\langle h;f\rangle + \langle \psi;f\rangle
$$
and $\langle \psi;f\rangle\leq\delta/2$, we have that 
$\langle h;f\rangle\geq \delta/2$.  Since $h\in\CF'$, there exists 
$h'\in\CF_{0}$ with $\langle h';f\rangle> \delta/2$ and we have the 
statement.  
\end{proof}

\subsection{Dual functions and anti-uniform spaces}

\begin{lemma}
\label{lem:norm_Dprod}
Let $f_\vepsilon$, $\vepsilon\in\wt V_d$, belong to $L^{2^{d-1}}(\mu)$.  
Then 
$$\norm{\CD_{d}(f_{\vepsilon}\colon\vepsilon\in \wt V_{d})}_{U(d)}^*\leq\prod_{\vepsilon\in \wt 
V_d}\norm{f_\vepsilon}_{2^{d-1}}.$$
\end{lemma}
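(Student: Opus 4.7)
The plan is to unfold the duality definition and reduce the claim directly to the Cauchy-Schwarz-Gowers inequality (in its $L^{2^{d-1}}$-extended form noted after Corollary~\ref{cor:boud_2d}).

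First, I would take an arbitrary $f\in L^{2^{d-1}}(\mu)$ with $\norm f_{U(d)}\leq 1$ and examine the pairing
$$
\bigl\langle \CD_{d}(f_{\vepsilon}\colon\vepsilon\in \wt V_{d});f\bigr\rangle
= \E_{x\in Z}\,f(x)\,\E_{\vt\in Z^d}\prod_{\vepsilon\in\wt V_d}f_\vepsilon(x+\vepsilon\cdot \vt).
$$
Setting $f_{\bzero}:=f$ and folding the factor $f(x)=f_{\bzero}(x+\bzero\cdot \vt)$ inside the product, this rewrites as
$$
\bigl\langle \CD_{d}(f_{\vepsilon}\colon\vepsilon\in \wt V_{d});f\bigr\rangle
= \E_{x\in Z,\;\vt\in Z^d}\prod_{\vepsilon\in V_d}f_\vepsilon(x+\vepsilon\cdot \vt).
$$
That the right-hand side is well-defined follows from Corollary~\ref{cor:alpha} (with the $L^{1}$ slot assigned to $f_{\bzero}$ after noting $f\in L^{2^{d-1}}\subset L^{1}$, or directly from the extension of CSG to $L^{2^{d-1}}$).

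Next I would apply the Cauchy-Schwarz-Gowers inequality, in the $L^{2^{d-1}}$ version justified in the paragraph after Corollary~\ref{cor:boud_2d}, to obtain
$$
\Bigl|\E_{x,\vt}\prod_{\vepsilon\in V_d}f_\vepsilon(x+\vepsilon\cdot \vt)\Bigr|
\leq \prod_{\vepsilon\in V_d}\norm{f_\vepsilon}_{U(d)}
= \norm{f}_{U(d)}\prod_{\vepsilon\in\wt V_d}\norm{f_\vepsilon}_{U(d)}.
$$
Now I would use the two inputs: $\norm f_{U(d)}\leq 1$ by hypothesis, and, for each $\vepsilon\in\wt V_d$, the comparison $\norm{f_\vepsilon}_{U(d)}\leq\norm{f_\vepsilon}_{2^{d-1}}$ from~\eqref{eq:boud_Norm}. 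Combining these gives
$$
\bigl|\bigl\langle \CD_{d}(f_{\vepsilon}\colon\vepsilon\in \wt V_{d});f\bigr\rangle\bigr|
\leq \prod_{\vepsilon\in\wt V_d}\norm{f_\vepsilon}_{2^{d-1}}.
$$

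Finally, I would take the supremum over all $f\in L^{2^{d-1}}(\mu)$ with $\norm f_{U(d)}\leq 1$, which by the very definition of the anti-uniform norm yields the desired inequality. There is no real obstacle here; the only point requiring a little care is confirming that CSG and Corollary~\ref{cor:alpha} do apply in the $L^{2^{d-1}}$ setting rather than just in $L^{\infty}$, but this is exactly the extension already recorded in the excerpt.
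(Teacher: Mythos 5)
Your proposal is correct and follows essentially the same route as the paper: pair the cubic convolution product with a test function placed in the $\bzero$ slot, apply the Cauchy--Schwarz--Gowers inequality (in its $L^{2^{d-1}}$-extended form), bound each $\norm{f_\vepsilon}_{U(d)}$ by $\norm{f_\vepsilon}_{2^{d-1}}$ via~\eqref{eq:boud_Norm}, and take the supremum over test functions with $U(d)$ norm at most $1$. Your extra attention to why the integrals are well defined in the $L^{2^{d-1}}$ setting is a fine addition but matches what the paper already records after Corollary~\ref{cor:boud_2d}.
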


\begin{proof}
For every $h\in L^{2^{d-1}}(\mu)$, we have that
\begin{align*}
 \bigl|\langle h; g\rangle\bigr| =  &
\Bigl|\E_{x\in Z,\;\vt\in Z^d} h(x+\bzero\cdot \vt)
\prod_{\vepsilon\in\wt 
V_d}f_\vepsilon(x+\vepsilon\cdot \vt)\Bigr|\\
\leq & \norm h_{U(d)}\cdot\prod_{\vepsilon\in\wt V_d}
\norm {f_\vepsilon}
\leq\norm h_{U(d)}\cdot\prod_{\vepsilon\in\wt V_d}
\norm {f_\vepsilon}_{2^{d-1}}
\end{align*}
by the Cauchy-Schwarz-Gowers Inequality and Inequality~\eqref{eq:boud_Norm}.  
\end{proof}

In particular, for $f\in L^{2^{d-1}}(\mu)$, we  have that
$ \norm{\CD_df}_{U(d)}^*\leq\norm f_{U(d)}^{2^d-1}$. On the other 
hand, by~\eqref{eq:norme_Dd}, 
$$
\norm f_{U(d)}^{2^d}= \langle\CD_d f;f\rangle\leq
\norm{\CD_d f}_{U(d)}^*\cdot\norm f_{U(d)}
$$ and thus $\norm{\CD_d f}_{U(d)}^*\geq\norm f_{U(d)}^{2^d-1}$. We 
conclude:
\begin{proposition}
\label{prop:norm_Df}
For every $f\in L^{2^{d-1}}(\mu)$, 
$\norm{\CD_d f}_{U(d)}^*=\norm f_{U(d)}^{2^d-1}$.
\end{proposition}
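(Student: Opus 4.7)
The plan is to prove the equality by establishing both inequalities, with the pairing identity $\langle\CD_d f;f\rangle=\norm f_{U(d)}^{2^d}$ from~\eqref{eq:norme_Dd} as the bridge between them.

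For the upper bound $\norm{\CD_d f}_{U(d)}^*\leq\norm f_{U(d)}^{2^d-1}$, I would take an arbitrary $h\in L^{2^{d-1}}(\mu)$ and expand
$$
\langle h;\CD_d f\rangle=\E_{x\in Z,\;\vt\in Z^d}\;h(x+\bzero\cdot\vt)\prod_{\vepsilon\in\wt V_d}f(x+\vepsilon\cdot\vt),
$$
which fits the form to which the Cauchy--Schwarz--Gowers inequality applies, with $h$ assigned to the vertex $\bzero$ and $f$ assigned to every vertex in $\wt V_d$. CSG then gives $|\langle h;\CD_d f\rangle|\leq\norm h_{U(d)}\cdot\norm f_{U(d)}^{2^d-1}$. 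Taking the supremum over $h$ with $\norm h_{U(d)}\leq 1$ produces the desired bound. This is essentially the argument of Lemma~\ref{lem:norm_Dprod}, but stopping at the $U(d)$ bound before invoking~\eqref{eq:boud_Norm} to replace it with a $2^{d-1}$-norm bound.

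For the lower bound, I would combine~\eqref{eq:norme_Dd} with the defining inequality of the dual norm, which gives
$$
\norm f_{U(d)}^{2^d}=\langle\CD_d f;f\rangle\leq\norm{\CD_d f}_{U(d)}^*\cdot\norm f_{U(d)}.
$$
When $\norm f_{U(d)}>0$, dividing by $\norm f_{U(d)}$ yields $\norm{\CD_d f}_{U(d)}^*\geq\norm f_{U(d)}^{2^d-1}$. In the degenerate case $\norm f_{U(d)}=0$, both sides are zero: the right-hand side trivially, and the left-hand side by the upper bound established in the previous paragraph.

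There is no real obstacle here; the proof is a clean duality computation in which CSG plays the role that H\"older's inequality plays in the Riesz-type identification of the norm of a functional with the norm of its representing function. The only mild subtlety is the vanishing case $\norm f_{U(d)}=0$, but this is painlessly absorbed into the upper bound.
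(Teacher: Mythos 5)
Your proof is correct and follows essentially the same route as the paper: the upper bound is the CSG step in the proof of Lemma~\ref{lem:norm_Dprod} (before weakening $\norm f_{U(d)}$ to $\norm f_{2^{d-1}}$), and the lower bound is the same duality computation via~\eqref{eq:norme_Dd}. Your explicit treatment of the case $\norm f_{U(d)}=0$ is a small tidy addition that the paper leaves implicit.
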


While the following proposition is not used in the 
sequel, it gives a helpful description of the anti-uniform space:
\begin{proposition}
\label{prop:convex_hull}
The unit ball of the anti-uniform space of level $d$ is the closed 
convex hull in $L^{2^{d-1}/(2^{d-1}-1)}(\mu)$ of the set
$$
 \bigl\{ \CD_d f\colon f\in L^{2^{d-1}}(\mu),\
 \norm f_{U(d)}\leq 1\bigr\}.
$$
\end{proposition}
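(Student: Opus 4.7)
The plan is to prove the two inclusions between $B^{*} := \{g \in L^{p'}(\mu) : \norm{g}_{U(d)}^{*} \leq 1\}$ (where $p' = 2^{d-1}/(2^{d-1}-1)$) and the set $K$ defined as the closed convex hull in $L^{p'}(\mu)$ of $S := \{\CD_{d} f : f \in L^{2^{d-1}}(\mu),\ \norm{f}_{U(d)} \leq 1\}$.

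For the easy inclusion $K \subseteq B^{*}$: Proposition~\ref{prop:norm_Df} gives $\norm{\CD_{d} f}_{U(d)}^{*} = \norm{f}_{U(d)}^{2^{d}-1} \leq 1$, so $S \subseteq B^{*}$. Since $B^{*}$ is convex, it suffices to verify that $B^{*}$ is closed in $L^{p'}(\mu)$. This follows from writing
$$B^{*} = \bigcap_{\substack{f \in L^{2^{d-1}}(\mu)\\ \norm{f}_{U(d)}\leq 1}} \bigl\{g \in L^{p'}(\mu) : |\langle g; f\rangle| \leq 1\bigr\},$$
since each $\langle \cdot ; f\rangle$ is $L^{p'}$-continuous by H\"older's inequality, so each half-space is $L^{p'}$-closed.

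For the hard inclusion $B^{*} \subseteq K$ I would argue by contradiction using Hahn-Banach. Suppose $g \in B^{*} \setminus K$. Since $K$ is a closed convex subset of $L^{p'}(\mu)$ containing $0 = \CD_{d}(0)$, strict separation produces $h \in L^{2^{d-1}}(\mu)$ (the topological dual of $L^{p'}(\mu)$) and $\alpha \in \R$ with
$$\sup_{\phi \in K} \langle h; \phi\rangle \;\leq\; \alpha \;<\; \langle h; g\rangle.$$
The crux is to sandwich both sides by $\norm{h}_{U(d)}$, which is finite by Corollary~\ref{cor:boud_2d}. On the upper side, the very definition of the anti-uniform norm gives $\langle h; g\rangle \leq \norm{h}_{U(d)}\,\norm{g}_{U(d)}^{*} \leq \norm{h}_{U(d)}$. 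On the lower side, assuming $\norm{h}_{U(d)}>0$, the extremal choice $f := h/\norm{h}_{U(d)}$ satisfies $\norm{f}_{U(d)} = 1$ and the $(2^{d}-1)$-homogeneity of $\CD_{d}$ together with~\eqref{eq:norme_Dd} yields
$$\langle h; \CD_{d} f\rangle = \frac{\langle h; \CD_{d} h\rangle}{\norm{h}_{U(d)}^{2^{d}-1}} = \frac{\norm{h}_{U(d)}^{2^{d}}}{\norm{h}_{U(d)}^{2^{d}-1}} = \norm{h}_{U(d)},$$
so $\sup_{\phi \in K}\langle h;\phi\rangle \geq \norm{h}_{U(d)}$. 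Combining these forces $\norm{h}_{U(d)} \leq \alpha < \langle h; g\rangle \leq \norm{h}_{U(d)}$, a contradiction. The degenerate case $\norm{h}_{U(d)} = 0$ is handled separately: the Cauchy-Schwarz-Gowers inequality then forces $\langle h; \CD_{d} f\rangle = 0$ for every $f$ with $\norm{f}_{U(d)}\leq 1$, and also $\langle h; g\rangle = 0$ (via $\norm{g}_{U(d)}^{*}\leq 1$), so $0 \in K$ yields $\alpha \geq 0$, still contradicting $\alpha < \langle h; g\rangle$.

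I expect the main obstacle to be the extremal identity $\sup_{\norm{f}_{U(d)}\leq 1} \langle h; \CD_{d} f\rangle = \norm{h}_{U(d)}$, which is precisely where the specific structure of the dual function and the identity~\eqref{eq:norme_Dd} enter; once one spots the correct test function $f\propto h$, the remainder of the argument is bookkeeping together with the boundedness comparisons already established in the previous section.
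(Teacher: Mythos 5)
Your proof is correct and follows essentially the same route as the paper: the easy inclusion via Proposition~\ref{prop:norm_Df} together with closedness and convexity of the dual unit ball, and the hard inclusion by Hahn--Banach separation with the extremal test function $f=h/\norm h_{U(d)}$ and the identity $\langle h;\CD_d h\rangle=\norm h_{U(d)}^{2^d}$. The only differences are cosmetic: the paper normalizes the separating functional to threshold $1$ and establishes closedness of the unit ball via weak* compactness, whereas you keep a general $\alpha$, write the ball as an intersection of closed half-spaces, and treat the degenerate case $\norm h_{U(d)}=0$ explicitly.
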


\begin{proof}
The proof is a simple application of duality. 

Let $B\subset L^{2^{d-1}/(2^{d-1}-1)}(\mu)$  
be the unit ball 
of the 
anti-uniform norm $\norm\cdot_{U(d)}^*$. 
Let $K$ be the convex hull of 
the set in the statement and let $\overline K$ be its closure in 
$L^{2^{d-1}/(2^{d-1}-1)}(\mu)$.

By Proposition~\ref{prop:norm_Df}, for every $f$ with $\norm 
f_{U(d)}\leq 1$, we have 
$\CD_d f\in B$. Since $B$ is convex, $K\subset B$.
Furthermore, $B$ is contained in the unit ball of 
$L^{2^{d-1}/(2^{d-1}-1)}(\mu)$ and is a weak* compact subset of this space.
Therefore, $B$ is closed in $L^{2^{d-1}/(2^{d-1}-1)}(\mu)$ and 
$\overline K\subset B$.

We check that $\overline K\supset B$. If this does not hold, there 
exists 
$g\in  L^{2^{d-1}/(2^{d-1}-1)}(\mu)$ 
satisfying $\norm g_{U(d)}^*\leq 1$ and $g\notin\overline K$.
By the Hahn-Banach Theorem, there exists $f\in L^{2^{d-1}}(\mu)$ with 
$\langle f ; h\rangle\leq 1$ for every $h\in K$ and 
$\langle f;g\rangle> 1$. This last property implies that 
$\norm f_{U(d)}>1$.  
Taking $\phi=\norm f_{U(d)}\inv\cdot f$, we have that
$\norm\phi_{U(d)}=1$ and $\CD_d\phi\in K$.  
Thus by the first property of $f$, 
$\langle \CD_d\phi;f \rangle\leq 1$.  But 
$$
 \langle \CD_d\phi;f\rangle=\norm f_{U(d)}^{-2^d+1}
\langle\CD_d f;f\rangle= \norm f_{U(d)}
$$
and we have a contradiction.
\end{proof}

It can be shown that when $Z$ is finite, the set appearing in 
Proposition~\ref{prop:convex_hull} is already closed and convex:
\begin{proposition}
Assume $Z$ is finite.  Then the set 
$$
 \bigl\{ \CD_d f\colon \norm f_{U(d)}\leq 1\bigr\}
$$
is the unit ball of the anti-uniform norm. 
\end{proposition}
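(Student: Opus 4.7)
The inclusion $\{\CD_d f : \norm f_{U(d)}\leq 1\}\subseteq B$, where $B$ denotes the unit ball of $\norm\cdot_{U(d)}^*$, is immediate from Proposition~\ref{prop:norm_Df}, which gives $\norm{\CD_d f}_{U(d)}^*=\norm f_{U(d)}^{2^d-1}\leq 1$. The content of the proposition is the reverse inclusion, and the natural strategy is a finite-dimensional variational argument exploiting the fact that $\CD_d$ is the gradient of $\frac{1}{2^d}\norm\cdot_{U(d)}^{2^d}$. Since the previous proposition already gives that the closed convex hull of this set is $B$, attempting instead to verify closedness and convexity directly from the definition of $\CD_d$ appears unappealing; the variational route is cleaner.

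The first step is to identify $\CD_d$ with a gradient. Define $\phi(f)=\frac{1}{2^d}\norm f_{U(d)}^{2^d}$ on the finite-dimensional space of functions on $Z$. By~\eqref{eq:def-norm}, $\phi$ is a homogeneous polynomial of degree $2^d$ in the values $\{f(z)\}_{z\in Z}$, and in particular smooth. Differentiating $\phi(f+sh)$ in $s$ at $s=0$ gives a sum of $2^d$ terms indexed by $\valpha\in V_d$, and the invariance of $\mu_d$ under the isometries of the cube, which act transitively on $V_d$, lets me collapse this to
$$\nabla\phi(f)[h]=\E_{x\in Z,\;\vt\in Z^d}h(x)\prod_{\vepsilon\in\wt V_d}f(x+\vepsilon\cdot\vt)=\langle\CD_df;h\rangle.$$
This step is the main piece of actual content; the rest is formal.

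Given $g$ with $\norm g_{U(d)}^*\leq 1$, I then consider $F(f)=\langle g;f\rangle-\phi(f)$. Using $|\langle g;f\rangle|\leq\norm g_{U(d)}^*\norm f_{U(d)}\leq\norm f_{U(d)}$,
$$F(f)\leq\norm f_{U(d)}-\frac{1}{2^d}\norm f_{U(d)}^{2^d}.$$
For $d\geq 2$, $\norm\cdot_{U(d)}\geq\norm\cdot_{U(2)}=\norm{\wh\cdot}_{\ell^4(\wh Z)}$ is a genuine norm, hence equivalent to any other norm on this finite-dimensional space, so $F$ is continuous and coercive and attains its maximum at some $f^*$. (The case $d=1$ is degenerate: $\CD_1 f=\E f$ is a constant, and both sides reduce to $\{c\in\R:|c|\leq 1\}$.) The first-order condition $\nabla F(f^*)=0$ combined with the gradient identity yields $g=\CD_df^*$. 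Finally, Proposition~\ref{prop:norm_Df} gives
$$\norm{f^*}_{U(d)}^{2^d-1}=\norm{\CD_df^*}_{U(d)}^*=\norm g_{U(d)}^*\leq 1,$$
so $\norm{f^*}_{U(d)}\leq 1$, placing $g$ in the set as required.

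The only nontrivial step is the gradient calculation; everything else is routine finite-dimensional convex analysis. Conceptually, the proposition is a manifestation of Legendre duality: $\phi$ and $\phi^*(g)=\frac{2^d-1}{2^d}(\norm g_{U(d)}^*)^{2^d/(2^d-1)}$ are convex conjugates, and the maps $\CD_d=\nabla\phi$ and its inverse $\nabla\phi^*$ send each unit ball bijectively onto the other.
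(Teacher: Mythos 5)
Your proof is correct and follows essentially the approach the paper intends: the paper omits the argument as being ``similar to Theorem~\ref{th:k}'', and your unconstrained maximization of $\langle g;f\rangle-\frac{1}{2^d}\norm f_{U(d)}^{2^d}$ is exactly that variational argument with the $\delta$-regularization (which is only needed in the infinite-dimensional setting) stripped out, the key ingredient being the gradient identity $\nabla\bigl(\norm\cdot_{U(d)}^{2^d}\bigr)(f)=2^d\,\CD_d f$, which is the paper's~\eqref{eq:fth_Ud}. Your handling of the degenerate case $d=1$ and the use of Proposition~\ref{prop:norm_Df} to bound $\norm{f^*}_{U(d)}$ are both fine, so nothing further is needed.
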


We omit the proof of this result, as the proof (for finite $Z$) 
 is similar to that of 
Theorem~\ref{th:k} below, which seems more useful.
For the general case, 
the analogous statement is not as clear because the ``uniform space'' does 
not consist only of functions.

\subsection{Approximation results for anti-uniform functions}

\label{subsec:approx}
\begin{theorem}
\label{th:k}
Assume $d\geq 1$ is an integer.  For every anti-uniform function $g$ with 
$\norm g_{U(d)}^*=1$, integer $k\geq d-1$, and $\delta>0$, 
the function $g$ can be written as
$$
 g=\CD_d f+h,
$$
where
\begin{gather*}
\norm f_{2^k}\leq 1/\delta;\\
\norm h_{2^k/(2^k-1)}\leq\delta;\\
\norm f_{U(d)}\leq 1.
\end{gather*}
\end{theorem}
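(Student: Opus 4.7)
The plan is to mirror the Hahn-Banach duality argument behind Proposition~\ref{prop:convex_hull}, enriched to incorporate the additional $L^{2^k}$-constraint on the dual function and the prescribed $L^{2^k/(2^k-1)}$ error term $h$. I would work in the Banach space $L^{2^k/(2^k-1)}(\mu)$: since $k\geq d-1$ one has $L^{2^{d-1}/(2^{d-1}-1)}(\mu)\subset L^{2^k/(2^k-1)}(\mu)$ on a probability space, so the anti-uniform function $g$ belongs to this space; and by Lemma~\ref{lem:Ddf} every $\CD_d f$ with $\norm f_{2^k}\leq 1/\delta$ is bounded, hence a fortiori in $L^{2^k/(2^k-1)}(\mu)$.

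Set
$$
A=\bigl\{\CD_d f:\norm f_{U(d)}\leq 1,\ \norm f_{2^k}\leq 1/\delta\bigr\},\qquad B=\bigl\{h\in L^{2^k/(2^k-1)}(\mu):\norm h_{2^k/(2^k-1)}\leq\delta\bigr\},
$$
and let $K=\overline{\mathrm{conv}(A)}+B$, the closure and convex hull being taken in $L^{2^k/(2^k-1)}(\mu)$. The heart of the argument is to show $g\in K$, which I would establish by contradiction. If $g\notin K$, the Hahn-Banach Theorem supplies $\phi\in L^{2^k}(\mu)$ with $\sup_{u\in K}\langle\phi;u\rangle\leq 1<\langle\phi;g\rangle$. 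From $B\subset K$ we extract $\delta\norm\phi_{2^k}\leq 1$; since $k\geq d-1$ this gives $\phi\in L^{2^{d-1}}(\mu)$, so $\norm\phi_{U(d)}$ is well-defined, and $\langle\phi;g\rangle>1=\norm g_{U(d)}^*$ forces $\norm\phi_{U(d)}>1$. Normalize $\psi=\phi/\norm\phi_{U(d)}$: then $\norm\psi_{U(d)}=1$ and $\norm\psi_{2^k}<\norm\phi_{2^k}\leq 1/\delta$, so $\CD_d\psi\in A\subset K$. Using Proposition~\ref{prop:norm_Df} and the homogeneity $\CD_d(cf)=c^{2^d-1}\CD_d f$,
$$
\langle\phi;\CD_d\psi\rangle=\norm\phi_{U(d)}^{-(2^d-1)}\langle\phi;\CD_d\phi\rangle=\norm\phi_{U(d)}^{-(2^d-1)}\norm\phi_{U(d)}^{2^d}=\norm\phi_{U(d)}>1,
$$
contradicting $\sup_K\langle\phi;\cdot\rangle\leq 1$. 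Hence $g\in K$, so $g=u+h$ with $u$ in the closed convex hull of $A$ and $\norm h_{2^k/(2^k-1)}\leq\delta$.

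The part I expect to be the main obstacle is that this argument produces $u$ only as a limit of convex combinations $\sum_i c_i\CD_d f_i$, whereas the theorem demands a single $f$ with $u=\CD_d f$. In the finite-$Z$ unnumbered proposition following Proposition~\ref{prop:convex_hull}, the analogous statement is precisely that $\{\CD_d f:\norm f_{U(d)}\leq 1\}$ is already convex; for $d=2$ this convexity is transparent from a Fourier calculation (one has $\widehat{\CD_2 f}(n)=\hat f(n)|\hat f(n)|^2$, and the homeomorphism $z\mapsto z|z|^2$ of $\C$ combined with the convexity of $x\mapsto x^{4/3}$ allows one to solve for a preimage with controlled $U(2)$ norm), but for general $d$ it is not obvious. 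To close the gap in general I would instead attempt a constrained variational approach, minimizing $\tfrac{1}{2^d}\norm f_{U(d)}^{2^d}+\tfrac{\mu}{2^k}\norm f_{2^k}^{2^k}-\langle g;f\rangle$ over $f\in L^{2^k}(\mu)$ for a well-chosen penalty $\mu$: the Euler-Lagrange equation $\CD_d f+\mu|f|^{2^k-1}\operatorname{sgn}(f)=g$ at the minimizer automatically produces a single-$f$ decomposition $g=\CD_d f+h$, and tuning $\mu$ together with weak-compactness of sublevel sets in the reflexive space $L^{2^k}(\mu)$ (to ensure existence of the minimizer) should yield the three required norm bounds simultaneously.
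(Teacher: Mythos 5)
Your first (Hahn--Banach separation) argument is, as you yourself note, not a proof of the stated theorem: it only places $g$ within $\delta$ (in $L^{2^k/(2^k-1)}$) of the \emph{closed convex hull} of dual functions, and nothing in it recovers a single $\CD_d f$. So the substance rests on your second, variational plan, and that plan is essentially the paper's own proof in penalized rather than constrained form: the paper introduces the auxiliary norm $\nnorm f=\bigl(\norm f_{U(d)}^{2^k}+\delta^{2^k}\norm f_{2^k}^{2^k}\bigr)^{1/2^k}$ (modified when $k=d-1$), uses reflexivity to produce a norming element $f'$ for $g$ with respect to $\nnorm\cdot$, and differentiates $t\mapsto\nnorm{f'+t\phi}$ to read off $g=c\norm{f'}_{U(d)}^{2^k-2^d}\CD_d f'+c\delta^{2^k}f'^{2^k-1}$, after which a rescaling using the homogeneity of $\CD_d$ gives the three bounds. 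Your unconstrained minimization of $\tfrac1{2^d}\norm f_{U(d)}^{2^d}+\tfrac\mu{2^k}\norm f_{2^k}^{2^k}-\langle g;f\rangle$ is the Lagrangian counterpart of the same idea, and the derivative computation it needs, $\tfrac{d}{dt}\norm{f+t\phi}_{U(d)}^{2^d}\bigr|_{t=0}=2^d\langle\CD_d f;\phi\rangle$, is exactly the paper's~\eqref{eq:fth_Ud}.

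The one place your write-up stops short is the assertion that ``tuning $\mu$ \dots should yield the three required norm bounds,'' which is precisely the quantitative heart of the theorem; it does go through, and more easily than you feared, so this is a gap of omission rather than a failure of the method. Take $\mu=\delta^{2^k}$. A minimizer $f$ exists by convexity, coercivity in $L^{2^k}(\mu)$ and reflexivity (the $U(d)$ term is convex and $L^{2^k}$-continuous, hence weakly lower semicontinuous), and the Euler--Lagrange identity reads $g=\CD_d f+\delta^{2^k}|f|^{2^k-1}\operatorname{sgn} f$. Pairing it with $f$ and using~\eqref{eq:norme_Dd} gives $\langle g;f\rangle=\norm f_{U(d)}^{2^d}+\delta^{2^k}\norm f_{2^k}^{2^k}$, while $\norm g_{U(d)}^*=1$ gives $\langle g;f\rangle\leq\norm f_{U(d)}$. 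Hence $\norm f_{U(d)}^{2^d}\leq\norm f_{U(d)}$, so $\norm f_{U(d)}\leq1$; and $\delta^{2^k}\norm f_{2^k}^{2^k}\leq1$, so $\norm f_{2^k}\leq1/\delta$ and $h:=\delta^{2^k}|f|^{2^k-1}\operatorname{sgn} f$ satisfies $\norm h_{2^k/(2^k-1)}=\delta^{2^k}\norm f_{2^k}^{2^k-1}\leq\delta$. With this short completion your penalized scheme even treats $k\geq d$ and $k=d-1$ uniformly, avoiding the case distinction in the paper's definition of $\nnorm\cdot$; without it, the proposal leaves the decisive estimates unproven.
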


As in the Dual Form of the Inverse Theorem, there is no hypothesis 
on $\norm g_\infty$ and we do not assume that the function $g$ is bounded.

\begin{proof}
Fix $k\geq d-1$ and $\delta>0$. 
For $f\in L^{2^k}(\mu)$, define
\begin{equation}
\label{eq:def_nnorm}
 \nnorm f=
\begin{cases}
\bigl(
\norm f_{U(d)}^{2^k}+\delta^{2^k}\norm f_{2^k}^{2^k}
\bigr)^{1/2^k}
& \text{if }k\geq d;\\
\bigl( 
\norm f_{U(d)}^{2^d}+\delta^{2^d}\norm f_{2^{d-1}}^{2^d}
\bigr)^{1/2^d}
& \text{if }k=d-1.
\end{cases}
\end{equation}

Since $\norm f_{U(d)}\leq\norm f_{2^{d-1}}\leq\norm f_{2^k}$
for every $f\in L^{2^k}(\mu)$, $\nnorm f$ is well defined on 
$L^{2^k}(\mu)$ and $\nnorm\cdot$ is a norm on this space, equivalent 
to the norm $\norm\cdot_{2^k}$.

Let $\nnorm\cdot ^*$ be the dual norm of $\nnorm\cdot$: for 
$g\in L^{2^k/(2^k-1)}(\mu)$,
$$
 \nnorm g^*=
\sup\Bigl\{
\bigl|\langle f; g\rangle\bigr|\colon
f\in L^{2^k}(\mu),\ \nnorm f\leq 1\Bigr\}.
$$

This dual norm is equivalent to the norm $\norm\cdot_{2^k/(2^k-1)}$.
Since $\nnorm f\geq\norm f_{U(d)}$ for every $f\in L^{2^k}(\mu)$, we have that 
$$
 \nnorm g^*\leq\norm g_{U(d)}^*\text{ for every }g.
$$

Fix an anti-uniform function $g$ with $\norm g_{U(d)}^*\leq 1$.
Since $\nnorm f\geq\norm f_{U(d)}$ for every $f$, we have that 
$$
 c:= \nnorm g^*\leq\norm g_{U(d)}^*\leq 1 .
$$
Set $g'=c\inv g$, and so $\nnorm {g'}^*=1$.

Since the norm $\nnorm\cdot$ is equivalent to the norm 
$\norm\cdot_{2^k}$ and the Banach space 
$(L^{2^k}(\mu),\norm\cdot_{2^k})$ is reflexive, 
the Banach space $(L^{2^k}(\mu),\nnorm\cdot)$ is also
reflexive. This means that $(L^{2^k}(\mu),\nnorm\cdot)$ 
is the dual of the Banach space
$(L^{2^k/(2^k-1)}(\mu),\nnorm\cdot^*)$. Therefore, there exists 
$f'\in L^{2^k}(\mu)$ with
$$
\nnorm {f'}=1\text{ and }\langle g';f'\rangle = 1.
$$
By definition~\eqref{eq:def_nnorm} of $\nnorm{f'}$, 
\begin{equation}
\label{eq:borne}
\norm{f'}_{U(d)}\leq 1\text{ and }\norm{f'}_{2^k}\leq 1/\delta.
\end{equation}

Assume first that $k\geq d$. (We explain the modifications 
needed for the case $k=d-1$ after.)

By~\eqref{eq:def-norm2}, \eqref{eq:norme_Dd}, and the symmetries of the
measure $\mu_d$,
for every $\phi\in L^{2^k}(\mu)$ and every $t\in\R$,
\begin{equation}
\label{eq:fth_Ud}
\norm{f'+t\phi}_{U(d)}^{2^d}=\norm {f'}_{U(d)}^{2^d}+2^dt\langle\CD_d 
f';\phi\rangle+o(t),
\end{equation}
where by $o(t)$ we mean any function such that $o(t)/t\to 0$ as $t\to 0$.

Raising this to the power $2^{k-d}$, we have that 
$$
\norm{f'+t\phi}_{U(d)}^{2^k}=
\norm {f'}_{U(d)}^{2^k}+2^kt\norm {f'}_{U(d)}^{2^k-2^d}
\langle\CD_d f';\phi\rangle+o(t).
$$
On the other hand,
$$
 \norm {f'+t\phi}_{2^k}^{2^k}=
\norm {f'}_{2^k}^{2^k}+2^kt\langle f'^{2^k-1};\phi\rangle+o(t).
$$
Combining these expressions and using the definition~\eqref{eq:def_nnorm}
of $\nnorm{f'+t\phi}$ and of $\nnorm {f'}$, 
we have that 
\begin{align*}
\nnorm{f'+t\phi}^{2^k}= & \norm{f'+t\phi}_{U(d)}^{2^k}+
\delta^{2^k}\norm{f'+t\phi}_{2^k}^{2^k}\\
= & \nnorm{f'}^{2^k}
+
2^kt\norm {f'}_{U(d)}^{2^k-2^d}
\langle\CD_d f';\phi\rangle
+
\delta^{2^k}2^kt\langle f'^{2^k-1};\phi\rangle+o(t)\\
= & 1+ 2^kt\norm {f'}_{U(d)}^{2^k-2^d}
\langle\CD_d f';\phi\rangle
+
\delta^{2^k}2^kt\langle f'^{2^k-1};\phi\rangle+o(t).
\end{align*}
Raising this to the power $1/2^k$, we have that
$$
\nnorm{f'+t\phi}=
1+t\norm {f'}_{U(d)}^{2^k-2^d}
\langle\CD_d f';\phi\rangle
+
\delta^{2^k}t\langle f'^{2^k-1};\phi\rangle+o(t).
$$
Since for every $\phi\in L^{2^k}(\mu)$ and every $t\in\R$ we have
$$
1+t\langle g';\phi\rangle= \langle g';f'+t\phi\rangle
\leq \nnorm{f'+t\phi},
$$
it follows that
$$
 1+t\langle g';\phi\rangle\leq
1+t\norm {f'}_{U(d)}^{2^k-2^d}
\langle\CD_d f';\phi\rangle
+
\delta^{2^k}t\langle f'^{2^k-1};\phi\rangle+o(t)\ .
$$
Since this holds for every $t$, we have
$$
\langle g';\phi\rangle =
\norm {f'}_{U(d)}^{2^k-2^d}
\langle\CD_d f';\phi\rangle
+
\delta^{2^k}\langle f'^{2^k-1};\phi\rangle.
$$
Since this holds for every $\phi$, we conclude that
$$ g'=\norm {f'}_{U(d)}^{2^k-2^d}\CD_d f'+ 
\delta^{2^k}f'^{2^k-1}.
$$
Thus
$$
g= c\norm {f'}_{U(d)}^{2^k-2^d}\CD_d f'+ c\delta^{2^k}f'^{2^k-1}.
$$
Set
$$
 f=\bigl(c\norm {f'}_{U(d)}^{2^k-2^d}\bigr)^{1/(2^d-1)} f'
\text{ and }
h=c\delta^{2^k}f'^{2^k-1}.
$$
Then 
$$
 g=\CD_d f+h
$$
and by~\eqref{eq:borne},
\begin{gather*}
 \norm f_{U(d)}\leq 1\ ;\ \norm f_{2^k}\leq 1/\delta \\
\norm h_{2^k/(2^k-1)}= 
c\delta^{2^k}\norm{f'}_{2^k}^{2^k-1}\leq\delta.
\end{gather*}

For the case $k=d-1$, 
for every $\phi\in L^{2^k}(\mu)$ and every $t\in\R$, we 
have~\eqref{eq:fth_Ud} and
$$
\norm {f'+t\phi}_{2^{d-1}}^{2^d}=
\norm {f'}_{2^{d-1}}^{2^d}+2^dt  \norm {f'}_{2^{d-1}}^{2^{d-1}} \langle 
f'^{2^{d-1}-1};\phi\rangle+o(t).
$$
Thus 
$$
 \nnorm{f'+t\phi}=1+t\langle\CD_d f';\phi\rangle+
\delta^{2^d}
\norm {f'}_{2^{d-1}}^{2^{d-1}} \langle 
f'^{2^{d-1}-1};\phi\rangle+o(t).
$$
As above, we deduce that 
$$
 g'=\CD_df'+\delta^{2^d}\norm {f'}_{2^{d-1}}^{2^{d-1}} 
f'^{2^{d-1}-1}.
$$
Taking
$$
 f=c^{1/(2^d-1)}f'\text{ and } h=c \delta^{2^d}\norm {f'}_{2^{d-1}}^{2^{d-1}} 
f'^{2^{d-1}-1},
$$
we have the statement.
\end{proof}

When $Z$ is finite, we can say more:
\begin{theorem}
\label{th:borne}
Assume that $Z$ is finite.  
Given a  function $g$ with $\norm g_{U(d)}^*=1$ and  $\delta>0$, 
the function $g$ can be written as
$$
 g=\CD_d f+h, 
$$
where
\begin{gather*}
\norm f_\infty\leq 1/\delta;\\
\norm h_1\leq\delta;\\
\norm f_{U(d)}\leq 1.
\end{gather*}
\end{theorem}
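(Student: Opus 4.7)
The plan is to deduce Theorem~\ref{th:borne} from Theorem~\ref{th:k} by a compactness and limit argument, taking advantage of the fact that when $Z$ is finite the space of functions on $Z$ is finite-dimensional. Write $N=|Z|$. With the normalized counting Haar measure, the elementary bounds $\norm\phi_\infty\leq N^{1/p}\norm\phi_p$ and $\norm\phi_1\leq\norm\phi_p$ hold for every function $\phi$ on $Z$ and every $p\geq 1$.

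First, for each integer $k\geq d-1$, apply Theorem~\ref{th:k} to $g$ with the given $\delta$; every function on a finite group is anti-uniform, so the hypotheses are satisfied. This produces
$$
g=\CD_d f_k+h_k,\qquad \norm{f_k}_{U(d)}\leq 1,\qquad \norm{f_k}_{2^k}\leq 1/\delta,\qquad \norm{h_k}_{2^k/(2^k-1)}\leq\delta.
$$
The two elementary bounds then give $\norm{f_k}_\infty\leq N^{1/2^k}/\delta$ and $\norm{h_k}_1\leq\delta$. In particular, $\{f_k\}$ is uniformly bounded in the finite-dimensional space of functions on $Z$, so along some subsequence $k_j\to\infty$ we have $f_{k_j}\to f$ in every norm. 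Setting $h:=g-\CD_df$ and using the continuity of the cubic convolution product in finite dimensions, also $h_{k_j}\to h$. Passing to the limit, with all relevant norms being continuous, yields
$$
\norm f_{U(d)}\leq 1,\qquad \norm f_\infty\leq\lim_{j\to\infty}N^{1/2^{k_j}}/\delta=1/\delta,\qquad \norm h_1\leq\delta,
$$
which is the desired decomposition.

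I do not foresee any serious obstacle: once Theorem~\ref{th:k} is available, the remaining step is a routine compactness argument. The only subtle point is that the bound $\norm{f_k}_{2^k}\leq 1/\delta$ only yields $\norm{f_k}_\infty\leq N^{1/2^k}/\delta$, a bound that depends on both $N$ and $k$ and is weaker than $1/\delta$ for any fixed $k$; the gain is that $N^{1/2^k}\to 1$ as $k\to\infty$, so the sharp bound $1/\delta$ appears only in the subsequential limit. A more direct alternative would mimic the proof of Theorem~\ref{th:k} using the norm $\nnorm f=\max\bigl(\norm f_{U(d)},\delta\norm f_\infty\bigr)$, but this norm is not smooth, so one would have to handle subdifferentials rather than Gateaux derivatives, which is less transparent than the limit argument proposed here.
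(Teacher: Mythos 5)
Your argument is correct and is essentially the proof in the paper: apply Theorem~\ref{th:k} for every $k\geq d-1$, use finite-dimensionality to extract a subsequence with $f_k\to f$ and $h_k\to h$, and pass all bounds to the limit. The only (harmless) difference is how the bound $\norm f_\infty\leq 1/\delta$ is reached: you use $\norm{f_k}_\infty\leq N^{1/2^k}\norm{f_k}_{2^k}$ with $N^{1/2^k}\to 1$, while the paper notes $\norm{f_\ell}_{2^k}\leq\norm{f_\ell}_{2^\ell}\leq 1/\delta$ for $\ell\geq k$, deduces $\norm f_{2^k}\leq 1/\delta$ for every $k$, and lets $k\to\infty$.
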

\begin{proof}
By Theorem~\ref{th:k}, for every $k\geq d-1$ we can write
$$
 g=\CD_d f_k+h_k, 
$$
where
$$
 \norm{f_k}_{2^k}\leq 1/\delta;\\
\norm{h_k}_{2^k/(2^k-1)}\leq\delta;\\
\norm{f_k}_{U(d)}\leq 1.
$$
Let $N=|Z|$.  Since $\norm{f_k}_{2^k}\leq 1/\delta$, we have that
$\norm{f_k}_\infty\leq N/\delta$.  In the same way,
$\norm{h_k}_\infty\leq N\delta$. 
By passing to a subsequence, since the functions are 
uniformly bounded we can therefore 
assume that $f_k\to f$ and that $h_k\to 
h$ pointwise as $k\to+\infty$.   Thus $\CD_df_k\to\CD_d f$ 
pointwise and so
$$
 g=\CD_d f+h.
$$
Since $\norm{f_k}_{U(d)}\to\norm f_{U(d)}$, it follows that $\norm 
f_{U(d)}\leq 1$.  For every $k\geq d-1$, we have that $\norm{h_k}_1\leq
\norm{h_k}_{2^k/(2^k-1)}\leq\delta$. Since $\norm{h_k}_1\to\norm 
h_1$, it follows that $\norm h_1\leq \delta$.  
For $\ell\geq k\geq d-1$, 
$$
\norm{f_\ell}_{2^k}\leq\norm{f_\ell}_{2^\ell}
\leq1/\delta.
$$
Taking the limit as $\ell\to+\infty$, we have that
$\norm f_{2^k}\leq 1/\delta$ for every $k\geq d-1$ and so
$\norm f_\infty\leq 1/\delta$.
\end{proof}

\begin{question}
Does  Theorem~\ref{th:borne} also hold when $Z$ 
is infinite?
\end{question}
We conjecture that the answer is positive,
 but the proof given does not carry through to this case.

\subsection{Applications}
Theorems~\ref{th:k} and~\ref{th:borne}
give insight into the $U(d)$ norm, connecting it to the 
classical $L^p$ norms.  For example, we have:
\begin{corollary}
Let $\phi$ be a function with $\norm \phi \leq 1$ and 
$\norm \phi_{U(d)}=\theta>0$. Then for every $p\geq 2^{d-1}$, 
there exists a function $f$ such that  
$\norm f_p\leq 1$ 
and $\langle \CD_{d}f;\phi\rangle > (\theta/2)^{2^d}$.  

If $Z$ is finite, there exists a function $f$ with $\norm 
f_\infty\leq 1$ and $\langle \CD_{d}f;\phi\rangle > (\theta/2)^{2^d}$.
\end{corollary}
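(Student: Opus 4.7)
The plan is to deduce this from Theorem~\ref{th:k} applied to the dual function $\CD_d\phi$ itself. By Proposition~\ref{prop:norm_Df}, $\norm{\CD_d\phi}_{U(d)}^{*}=\theta^{2^d-1}$, so the normalized function $g:=\theta^{-(2^d-1)}\CD_d\phi$ has anti-uniform norm equal to $1$. Given $p\geq 2^{d-1}$, I would choose $k\geq d-1$ with $2^k\geq p$ and apply Theorem~\ref{th:k} to $g$ with a parameter $\delta\in(0,\theta)$ to be optimized, producing a decomposition $g=\CD_d f'+h$ with $\norm{f'}_{U(d)}\leq 1$, $\norm{f'}_{2^k}\leq 1/\delta$, and $\norm{h}_{2^k/(2^k-1)}\leq\delta$.

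The next step is to take the inner product of both sides with $\phi$. On the left, using~\eqref{eq:norme_Dd},
$$\langle g;\phi\rangle=\theta^{-(2^d-1)}\langle\CD_d\phi;\phi\rangle=\theta^{-(2^d-1)}\norm{\phi}_{U(d)}^{2^d}=\theta.$$
On the right, since $\mu$ is a probability measure and $\norm{\phi}_{\infty}\leq 1$ we have $\norm{\phi}_{2^k}\leq 1$, and H\"older's inequality yields $|\langle h;\phi\rangle|\leq\delta$. Hence $\langle\CD_d f';\phi\rangle\geq\theta-\delta$. By monotonicity of $L^{p}$-norms on a probability space, $\norm{f'}_p\leq\norm{f'}_{2^k}\leq 1/\delta$, so rescaling $f:=\delta f'$ gives $\norm{f}_p\leq 1$ and
$$\langle\CD_d f;\phi\rangle=\delta^{2^d-1}\langle\CD_d f';\phi\rangle\geq\delta^{2^d-1}(\theta-\delta).$$

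The final step is to choose $\delta$ so that the right-hand side exceeds $(\theta/2)^{2^d}$. The function $\delta\mapsto\delta^{2^d-1}(\theta-\delta)$ is maximized at $\delta^{\ast}=(1-2^{-d})\theta$, where its value $\theta^{2^d}\cdot 2^{-d}(1-2^{-d})^{2^d-1}$ strictly exceeds $\theta^{2^d}\cdot 2^{-2^d}=(\theta/2)^{2^d}$ for every $d\geq 2$ (a routine elementary check). For the second assertion (finite $Z$) I would invoke Theorem~\ref{th:borne} in place of Theorem~\ref{th:k}, obtaining $\norm{f'}_{\infty}\leq 1/\delta$ and $\norm{h}_1\leq\delta$; the bound $|\langle h;\phi\rangle|\leq\delta$ then follows directly from $\norm{\phi}_\infty\leq 1$, and the same rescaling produces $f$ with $\norm{f}_{\infty}\leq 1$.

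The main subtlety is obtaining a \emph{strict} inequality: the naive choice $\delta=\theta/2$ gives $\delta^{2^d-1}(\theta-\delta)=(\theta/2)^{2^d}$ exactly, so one must either optimize $\delta$ slightly (as above) or exploit the slack in the bound $\norm{f'}_{2^k}\leq 1/\delta$. Apart from this, the argument is fairly mechanical once the correct object $g=\theta^{-(2^d-1)}\CD_d\phi$ is identified as the input to Theorem~\ref{th:k}.
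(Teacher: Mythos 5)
Your proof is correct and follows essentially the same route as the paper: reduce to $p=2^k$, take $g$ with $\norm g_{U(d)}^*=1$ and $\langle g;\phi\rangle=\theta$, apply Theorem~\ref{th:k} (Theorem~\ref{th:borne} in the finite case) with $\delta$ of order $\theta$, and rescale. Your two refinements---constructing $g$ explicitly as $\theta^{-(2^d-1)}\CD_d\phi$ via Proposition~\ref{prop:norm_Df} rather than invoking an abstract norming functional, and optimizing $\delta$ to get the strict inequality that the paper's terse choice $\delta=\theta/2$ only yields non-strictly (for $d\geq 2$; the case $d=1$ is trivial anyway)---are careful touches but do not change the method.
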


\begin{proof}
It suffices to prove the result when $p= 2^k$ for some integer $k\geq 
d-1$.
There exists $g$ 
with $\norm g_{U(d)}^{*}=1$ and 
$\langle g;\phi\rangle = \theta$.  Taking $\delta = \theta/2$ in 
Theorem~\ref{th:k}, we 
have the first statement.  For the second statement, apply 
Theorem~\ref{th:borne}.
\end{proof}

Theorem~\ref{th:borne} leads to an  equivalent 
reformulation of the Inverse Theorem, without any explicit
reference to the Gowers norms. 
 For all $d\geq 1$ and $\delta > 0$, there exists a family of 
``$(d-1)$-step nilsequences 
  of bounded complexity'' whose convex hull $\CF''(d, \delta)$
satisfies:

\begin{dualinverse2}
\label{th:reformulated}
For every $\delta > 0$, every function $\phi$ on $\Z_{N}$
with $\norm \phi_\infty\leq 1$, 
the function $\CD_d\phi$  can be written as $\CD_d\phi = g +h$ 
with $g\in\CF''(d,\delta)$ and $\norm h_{1}\leq\delta$.
\end{dualinverse2}

\begin{proof}
We show that the statement is equivalent to the 
Dual Form of the Inverse Theorem. First assume the Dual Form.  Given 
$\phi$ with $\norm \phi_\infty\leq 1$, we have that
$\norm \phi_{U(d)}\leq 1$ and thus
$\norm{\CD_d\phi}_{U(d)}^{*}\leq 1$.
By the Dual Form, $\CD_{d}\phi = h+\psi$, 
where $h\in\CF'(d,\delta)$ and $\norm\psi_{1}\leq\delta$, 
which is exactly the Reformulated Version.

Conversely, assume the Reformulated Version.  Let 
$g\in B_{U(d)^{*}}(1)$.  
Then by Theorem~\ref{th:k},
$g = \CD_{d}h +\psi$, where $\norm h_\infty\leq 2/\delta$ 
and $\norm\psi_{1}\leq \delta/2$.
  Define $\CF' = \CF'(d,\delta)$ to be equal to 
$(2/\delta)^{2^d-1} \CF''(d,\eta)$, where $\eta$
is a positive constant to 
be defined later and
$\CF''(d,\eta)$ is as in the 
 Reformulated Version.  By the Reformulated Version, 
$\CD_{d}h = f+\psi$, with 
$$
f\in\CF'\text{ and }
\norm\psi_1\leq (2/\delta)^{2^d-1}\eta\  .
$$  
  Then 
$g = f+\phi+\psi$ with $f\in\CF'$ and 
$\norm{\phi+\psi}_{1}\leq \delta/2+(2/\delta)^{2^d-1}\eta$.
Taking $\eta=(\delta/2)^{2^d}$, we have the result.
\end{proof}

\subsection{Anti-uniformity norms and embeddings}
\label{sec:embedding}

This section is a conjectural, and somewhat optimistic, exploration 
of the possible
uses of the theory of anti-uniform norms we have 
developed.  The main interest is not the sketches of proofs 
included, but rather the questions posed and the
directions that we conjecture may be approached using these methods.

\begin{definition}
If $G$ is a $(d-1)$-step nilpotent Lie group and $\Gamma$ is a
discrete, cocompact subgroup of $G$, the compact manifold $X = G/\Gamma$ 
is {\em $(d-1)$-step nilmanifold}. 
The natural action of $G$ on $X$ by left translations is written as 
$(g,x)\mapsto g.x$ for $g\in G$ and $x\in X$.
\end{definition}

We recall  the following ``direct'' result (a converse
to the Inverse Theorem), proved along the lines of 
arguments in~\cite{HK1}: 
\begin{proposition}[Green and Tao (Proposition~12.6, \cite{GT2})]
\label{prop:12_6}
Let $X=G/\Gamma$ be a $(d-1)$-step nilmanifold,
 $x\in X$, $g\in G$, 
$F$ be a continuous function on  $X$, and $N\geq 2$ be an integer.
Let $f$ be a function on $\Z_N$ 
with $|f|\leq 1$. Assume that for some $\eta>0$, 
$$
 \bigl|\E_{0\leq n<N} f(n)F(g^n\cdot x)\bigr|\geq\eta.
$$
Then there exists a constant $c = c(X,F, \eta) > 0$ such that 
$$
 \norm f_{U(d)}\geq c.
$$
\end{proposition}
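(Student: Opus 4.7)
The plan is to reduce everything to a uniform bound on the anti-uniform norm of nilsequences. Write $h(n)=F(g^n\cdot x)$, viewed as a function on $\Z_N$. By the duality between $U(d)$ and $U(d)^*$, for any such $h$ one has
$$
 \eta \leq \bigl|\langle f;h\rangle\bigr| \leq \norm f_{U(d)}\cdot\norm h_{U(d)}^*,
$$
so the whole proposition would follow if we can show the uniform estimate $\norm h_{U(d)}^*\leq C(X,F)$, with $C$ independent of $N$ and of the base point $x$. With such a bound we set $c=\eta/C$.

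To establish the anti-uniform bound, I would unfold the definition and test against an arbitrary $\phi\in L^{2^{d-1}}$: we must control
$$
 \E_{n\in\Z_N}\phi(n)\,F(g^n\cdot x) \quad\text{by}\quad C(X,F)\cdot\norm\phi_{U(d)}.
$$
The natural way is an iterative Cauchy--Schwarz (van der Corput) argument in $d$ steps, introducing successive shifts $t_1,\dots,t_d$. After $d$ such applications, the left-hand side to the power $2^d$ is bounded, up to a lower-order error, by
$$
 \E_{n,\,\vt\in\Z_N^d}\;\prod_{\vepsilon\in V_d}\phi(n+\vepsilon\cdot\vt)
 \cdot\prod_{\vepsilon\in V_d} F\bigl(g^{n+\vepsilon\cdot\vt}\cdot x\bigr).
$$
The first product is precisely the $2^d$-fold cubic average defining $\norm\phi_{U(d)}^{2^d}$. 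The second product is a $2^d$-fold cubic average of $F$ along the $G$-orbit, which is the point where the $(d-1)$-step hypothesis on $X$ enters crucially.

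The core of the argument, and the main obstacle, is to show that the second product is bounded by a constant depending only on $(X,F)$. This rests on the structure theory of nilmanifolds: the point $(g^{n+\vepsilon\cdot\vt}\cdot x)_{\vepsilon\in V_d}$ lies in the closed subnilmanifold $X^{[d]}\subset X^{2^d}$ of ``$d$-dimensional Host--Kra cubes'' on $X$. Because $X$ is only $(d-1)$-step nilpotent, this cube manifold degenerates: the $\vepsilon=\bzero$ coordinate is determined, modulo a factor whose fibers have controlled volume, by the remaining coordinates, and $F$ is continuous hence bounded. Combining the Leibman--Green--Tao equidistribution of such polynomial orbits in $X^{[d]}$ with the degeneration, we obtain a uniform $L^\infty$ bound $M(X,F)$ on the cubic average of $F$. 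Applying this bound and then Corollary~\ref{cor:boud_2d} to the $\phi$-factors (or absorbing the estimate directly as a factor in front of $\norm\phi_{U(d)}^{2^d}$) gives the desired inequality
$$
 \Bigl|\E_n\phi(n)F(g^n\cdot x)\Bigr|^{2^d}\leq M(X,F)\cdot\norm\phi_{U(d)}^{2^d}+o_{N\to\infty}(1),
$$
hence $\norm h_{U(d)}^*\leq M(X,F)^{1/2^d}$, and the proposition follows.
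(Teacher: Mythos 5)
Your reduction hinges on the claim that $\norm h_{U(d)}^*\leq C(X,F)$ uniformly in $N$ and $x$, from which you set $c=\eta/C$. This is precisely the step that fails, and the paper itself points this out immediately after Proposition~\ref{prop:12_6bis}: the proposition does \emph{not} imply that $\norm h_{U(d)}^*$ is bounded independently of $N$, and using~\eqref{eq:U2dual} one can already build a counterexample with $d=2$, $X=\T$ and $F$ merely continuous (for continuous $F$ the Fourier coefficients of the restricted orbit need not be summable in $\ell^{4/3}(\Z_N)$ uniformly in $N$). Whether such a uniform dual bound holds even for \emph{smooth} $F$ is exactly the open Question~\ref{qu:smooth_dual}, answered positively only in the special case of genuine embeddings $g^N\cdot x=x$ (Proposition~\ref{prop:embed-ZN}). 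So your strategy proves something strictly stronger than the statement, and that stronger statement is false; in the true result the constant $c(X,F,\eta)$ must be allowed to depend on $\eta$ in a nonlinear way, which is incompatible with the linear relation $\eta\leq C\norm f_{U(d)}$ you aim for.

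There is also a quantitative problem inside your Cauchy--Schwarz step. After the $d$-fold van der Corput you propose to bound the factor $\prod_{\vepsilon}F(g^{n+\vepsilon\cdot\vt}\cdot x)$ by a constant $M(X,F)$ and pull it out; but once the nil-factor is removed in $L^\infty$, the remaining average is $\E_{n,\vt}\prod_\vepsilon\phi(n+\vepsilon\cdot\vt)$ with no absolute values inside, and estimating it after discarding the weights amounts to bounding $\norm{|\phi|}_{U(d)}^{2^d}$, not $\norm\phi_{U(d)}^{2^d}$; all the cancellation in $\phi$ is lost (take $\phi$ random signs). The actual argument, along the lines of~\cite{HK1} and~\cite{GT2}, uses the $(d-1)$-step hypothesis differently: on the cube nilmanifold the $\vec 0$-coordinate of a $d$-cube is a continuous function of the other $2^d-1$ coordinates, so $F(g^n\cdot x)$ can be rewritten, after averaging over cubes, as $\wt F$ evaluated at the remaining vertices; one then approximates $\wt F$ uniformly to within $\eta/2$ by a finite sum of product functions (Stone--Weierstrass), attaches those factors to the vertices $\vepsilon\neq\vec 0$, and applies the Cauchy--Schwarz--Gowers inequality with $\phi$ sitting at the $\vec 0$ vertex. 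The number and size of the approximating products depend on $\eta$, which is exactly why $c$ depends on $\eta$, and no uniform anti-uniform norm bound is ever claimed. Finally, the unquantified $o_{N\to\infty}(1)$ error in your last display is not acceptable for a statement required to hold for every $N\geq 2$ with a constant $c(X,F,\eta)$.
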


The key point is that the constant $c$ 
depends only on $X$,  $F$, and $\eta$, and not on
 $f$, $N$, $g$ or $x$.

\begin{remark}
In~\cite{GT2}, the average is taken over 
the interval $[-N/2,N/2]$ instead of $[0,N)$, but
the proof of Proposition~\ref{prop:12_6} is the same 
for the modified choice of interval.

A similar result  is given in Appendix G of~\cite{GTZ1}, and 
proved using simpler methods, 
but there the conclusion is about the norm 
 $\norm f_{U_d(\Z_{N'})}$, where $N'$ is sufficiently large with 
respect to $N$.
\end{remark}

By duality, Proposition~\ref{prop:12_6} can be rewritten as
\begin{proposition}
\label{prop:12_6bis}
Let $X=G/\Gamma, x,g,F$  be as in 
Proposition~\ref{prop:12_6}.
Let $N\geq 2$ be an integer and let  $h$ denote the 
function $n\mapsto F(g^n\cdot x)$ 
restricted to $[0,N)$ and considered as a function on $\Z_N$.
Then for  every $\eta>0$,  we can write
$$
h=\phi+\psi
$$
where $\phi$ and $\psi$
are functions on $Z_N$ with
$\norm{\phi}_{U(d)}^*\leq c(X,F,\eta)$ and
 $\norm{\psi}_1\leq\eta$.
\end{proposition}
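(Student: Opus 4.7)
The plan is to deduce Proposition~\ref{prop:12_6bis} from Proposition~\ref{prop:12_6} by a Hahn--Banach separation argument, in direct parallel to the derivation of the Dual Form of the Inverse Theorem from the Inverse Theorem itself earlier in the paper. Set $c_0=c(X,F,\eta)$ as in Proposition~\ref{prop:12_6}, fix any $A>\eta/c_0$ (for instance $A=2\eta/c_0$), and consider the subset
$$
K=\bigl\{\phi+\psi\,:\, \norm{\phi}_{U(d)}^{*}\leq A,\ \norm{\psi}_1\leq\eta\bigr\}
$$
of the finite-dimensional space of functions on $\Z_N$. As a Minkowski sum of two closed, convex, symmetric neighborhoods of $0$, the set $K$ is itself closed, convex, and symmetric, and the goal reduces to showing that $h\in K$, which then yields the desired decomposition with $c(X,F,\eta)$ taken equal to $A$.

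Suppose toward contradiction that $h\notin K$. By the Hahn--Banach separation theorem applied in this finite-dimensional setting, there is a function $f$ on $\Z_N$ with $\langle f;k\rangle\leq 1$ for every $k\in K$ and $\langle f;h\rangle>1$. Because $K$ is the Minkowski sum of the two balls, the first condition decouples along the summands: using the duality built into the definition of $\norm\cdot_{U(d)}^{*}$, namely $\sup\{\langle f;\phi\rangle:\norm{\phi}_{U(d)}^{*}\leq A\}=A\norm f_{U(d)}$, together with the standard $L^1$--$L^\infty$ duality on $\Z_N$, one obtains
$$
A\norm f_{U(d)}+\eta\norm f_\infty\leq 1.
$$

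Rescale by setting $\tilde f=\eta f$, so that $\norm{\tilde f}_\infty\leq 1$ and $\norm{\tilde f}_{U(d)}\leq \eta/A=c_0/2$. Since $\langle f;h\rangle>1$ translates to
$$
\bigl|\E_{0\leq n<N}\tilde f(n)F(g^n\cdot x)\bigr|=|\langle\tilde f;h\rangle|>\eta,
$$
Proposition~\ref{prop:12_6} applied to $\tilde f$ (with parameter $\eta$) forces $\norm{\tilde f}_{U(d)}\geq c_0$, contradicting $\norm{\tilde f}_{U(d)}\leq c_0/2$. Hence $h\in K$, which is exactly the statement.

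The argument is essentially formal and presents no substantial obstacle; the full analytic content is packed into Proposition~\ref{prop:12_6}. The one point to execute with care is the splitting of $\sup_{k\in K}\langle f;k\rangle$ into the two separate norm dualities (which uses only the definition of $\norm\cdot_{U(d)}^{*}$ given in Section~2), and the choice of $A$ strictly greater than $\eta/c_0$ so that the strict separation inequality $\langle\tilde f;h\rangle>\eta$ is incompatible with the contrapositive of Proposition~\ref{prop:12_6}.
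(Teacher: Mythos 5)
Your proof is correct, and it is exactly the ``by duality'' argument the paper has in mind (the paper gives no separate proof of this proposition, and your Hahn--Banach separation mirrors the paper's own derivation of the Dual Form of the Inverse Theorem). The only cosmetic difference is that your constant comes out as $2\eta/c(X,F,\eta)$ rather than literally the $c(X,F,\eta)$ of Proposition~\ref{prop:12_6}, which is harmless since all that matters is that it depends only on $X$, $F$ and $\eta$.
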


Proposition~\ref{prop:12_6} does not imply that
$\norm{h}_{U(d)}^*$ is bounded independent of $N$, and 
using~\eqref{eq:U2dual}, one can easily construct a
counterexample for $d=2$ and $X=\T$. 
On the other hand, 
for $d=2$ we do have that $\norm{h}_{U(d)}^*$ 
is bounded independent of $N$ when 
the function $F$ is sufficiently smooth.
Recalling that the 
Fourier series of a continuously differentiable function on $\T$ is
absolutely convergent and directly computing using Fourier coefficients, we have:
\begin{proposition}
Let $F$ be a 
continuously differentiable function on $\T$ and let $\alpha\in\T$.
Let $N\geq 2$ be an integer and let  $h$ denote the restriction of the
function $n\mapsto F(\alpha^n)$ to $[0,N)$, considered as a function on $\Z_N$.
Then
$$
 \norm{h}_{U(2)}^*\leq c\norm{\wh F}_{\ell^1(\Z)},
$$
where $c$ is a universal constant.
\end{proposition}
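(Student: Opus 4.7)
\emph{Proof plan.} The plan is to apply the identity \eqref{eq:U2dual}, expand $h$ using the Fourier series of $F$, and reduce the problem to a uniform estimate on the dual norm of a single pure character. Since $F\in C^1(\T)$, the series $F(x)=\sum_{k\in\Z}\wh F(k)e(kx)$ (with $e(x)=e^{2\pi i x}$) converges absolutely, and so on $[0,N)$, considered as a function on $\Z_N$,
$$
h=\sum_{k\in\Z}\wh F(k)\,\chi_k,\qquad\chi_k(n):=e(kn\alpha).
$$
Although $\chi_k$ is complex-valued while $F$ and $h$ are real, one may either group complex-conjugate pairs into real trigonometric pieces or extend \eqref{eq:U2dual} to complex-valued functions (the natural complex extension of $U(2)$, using one conjugation, still yields the $\ell^4$-Parseval formula). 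Combining this with the triangle inequality in $\ell^{4/3}(\wh{\Z_N})$ and \eqref{eq:U2dual} gives
$$
\|h\|_{U(2)}^{*}=\|\wh h\|_{\ell^{4/3}(\wh{\Z_N})}\leq\sum_{k\in\Z}|\wh F(k)|\cdot\|\chi_k\|_{U(2)}^{*}.
$$
It therefore suffices to produce a universal constant $c$ such that $\|\chi_k\|_{U(2)}^{*}\leq c$ uniformly in $k\in\Z$, $\alpha\in\T$, and $N\geq 2$.

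To establish this uniform bound I would compute $\wh{\chi_k}$ on $\Z_N$ directly. The sum
$$
\wh{\chi_k}(j)=\frac{1}{N}\sum_{n=0}^{N-1}e\bigl(n(k\alpha-j/N)\bigr)
$$
is geometric, and with $\theta_j:=(k\alpha-j/N)\bmod 1\in[-1/2,1/2)$ the standard Dirichlet-kernel estimate yields $|\wh{\chi_k}(j)|\leq\min\bigl(1,(2N|\theta_j|)^{-1}\bigr)$. As $j$ ranges over $\Z_N$, the points $\theta_j$ form a translate in $\T$ of the equally spaced set $\{m/N\colon 0\leq m<N\}$, so for each $\ell=0,1,\dots,\lfloor N/2\rfloor$ at most two indices $j$ satisfy $\ell/N\leq|\theta_j|<(\ell+1)/N$. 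Summing $|\wh{\chi_k}(j)|^{4/3}$ shell by shell bounds $\|\wh{\chi_k}\|_{\ell^{4/3}(\wh{\Z_N})}^{4/3}$ by a constant multiple of $1+\sum_{\ell\geq 1}\ell^{-4/3}$, which is finite and independent of $k$, $\alpha$, and $N$. Taking a $3/4$-th power gives the required uniform constant $c$, and the proposition follows.

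The argument has no serious obstacle: the main technical point is the uniform $\ell^{4/3}$-bound on the DFT of a single character, which succeeds precisely because for $d=2$ the Parseval-type identity \eqref{eq:U2dual} converts everything into an explicit, absolutely convergent sum. The $C^1$ hypothesis is used only to guarantee $\wh F\in\ell^1(\Z)$, so the same proof gives the analogous bound for any $F\colon\T\to\R$ with absolutely convergent Fourier series.
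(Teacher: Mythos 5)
Your proof is correct and follows exactly the route the paper intends (the paper only sketches it: "recalling that the Fourier series of a $C^1$ function is absolutely convergent and directly computing using Fourier coefficients"), namely expanding $h$ via the absolutely convergent series of $F$, invoking~\eqref{eq:U2dual}, and bounding the $\ell^{4/3}(\wh{\Z_N})$ norm of the DFT of a single restricted exponential uniformly via the Dirichlet-kernel estimate. Your handling of the real/complex issue by pairing conjugate frequencies is a legitimate way to stay within the paper's real-valued conventions.
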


A similar result holds for functions on $\T^k$.

It is natural to ask whether a similar result holds for 
$d>2$.  For the remained of this section, we assume that every nilmanifold $X$ is 
endowed with a smooth Riemannian metric. For $k\geq 1$,
we let $\CC^k(X)$ denote the space of $k$-times continuously 
differentiable functions on $X$, endowed with the usual norm
$\norm\cdot_{\CC^k(X)}$.  We ask if the dual 
norm is bounded independent of $N$:
\begin{question}
\label{qu:smooth_dual}
Let $X=G/\Gamma$ be a $(d-1)$-step nilmanifold. Does there exist 
an integer 
$k\geq 1$ and a positive constant $c$ such that for all choices 
of a function $F\in\CC^k(X)$, $g\in G$, $x\in X$
 and integer $N\geq 2$,  writing $h$ for the restriction to $[0,N)$ of 
the function $n\mapsto F(g^n\cdot x)$, considered as a function on $\Z_N$, 
we have 
 $$
 \norm {h}_{U(d)}^*\leq c\norm F_{\CC^k(X)}?
$$
\end{question}

\begin{definition}
If $g\in G$ and $x\in X$ are such that $g^N\cdot x=x$, we say that the 
map $n\mapsto g^n\cdot x$ is an \emph{embedding} of $\Z_N$ in $X$.
\end{definition}
\begin{proposition}
\label{prop:embed-ZN}
The answer to Question~\ref{qu:smooth_dual} is positive under the 
additional hypothesis that 
$n\mapsto g^n\cdot x$ is an embedding of $\Z_N$ in $X$,
that is, that 
$g^N\cdot x=x$. 
\end{proposition}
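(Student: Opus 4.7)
The plan is to transfer the anti-uniform norm from $\Z_N$ to the finite orbit $Y:=\{g^n\cdot x\colon 0\leq n<N\}$ in $X$, and then bound the dual norm on $Y$ via a cubic-convolution decomposition of $F$ on the ambient nilmanifold. Under the embedding hypothesis the map $\pi\colon n\mapsto g^n\cdot x$ is a bijection $\Z_N\to Y$ and endows $Y$ with the structure of a cyclic group of order $N$ generated by translation by $g$. For any $f\colon\Z_N\to\R$, a direct change of variables in the definitions gives
$$
 \langle h;f\rangle_{\Z_N}=\langle F|_Y;\tilde f\rangle_Y
 \quad\text{and}\quad
 \norm{\tilde f}_{U(Y,d)}=\norm f_{U(\Z_N,d)},
$$
where $\tilde f=f\circ\pi\inv$ and $Y$ carries its uniform probability measure. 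Consequently $\norm h_{U(\Z_N,d)}^*=\norm{F|_Y}_{U(Y,d)}^*$, so the problem reduces to bounding $\norm{F|_Y}_{U(Y,d)}^*\leq c\,\norm F_{\CC^k(X)}$ uniformly in the orbit $Y$.

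To obtain such a bound, I would produce a decomposition on the nilmanifold $X$ of the form
$$
 F=\sum_{j}\CD_d\bigl(\phi_j^\vepsilon\colon\vepsilon\in\wt V_d\bigr)+R,
$$
where the cubic convolution product is taken with respect to the cube measure of $X$ in the sense of~\cite{HK1}, the $\phi_j^\vepsilon$ are smooth functions on $X$, and both $\sum_j\prod_{\vepsilon\in\wt V_d}\norm{\phi_j^\vepsilon}_\infty$ and $\norm R_\infty$ are controlled by $c'\,\norm F_{\CC^k(X)}$ for some $k$ sufficiently large depending on $X$. Such a decomposition should follow by Fourier expansion along the vertical torus of $X$ combined with induction on the nilpotent step, in the spirit of the cube constructions of~\cite{HK1}. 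Assuming one can show that for $y\in Y$ the value of the cubic convolution on $X$ coincides, up to a uniformly small error, with the cubic convolution $\CD_d$ of the restrictions $\phi_j^\vepsilon|_Y$ computed on $Y$ as a cyclic group, Lemma~\ref{lem:norm_Dprod} applied on $Y$ then yields
$$
 \norm{\CD_d(\phi_j^\vepsilon|_Y\colon\vepsilon\in\wt V_d)}_{U(Y,d)}^*\leq\prod_{\vepsilon\in\wt V_d}\norm{\phi_j^\vepsilon}_\infty,
$$
and summing over $j$ together with the $L^1$ contribution of $R$ gives the conclusion.

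The main obstacle is this last transfer step: the cubic convolution on $X$ averages over $\vt\in X^d$ while that on $Y$ averages over $\vec s\in Y^d$, and uniform comparison requires that the $d$-fold product action of $g$ equidistribute in an appropriate sub-nilmanifold of the cube space of $X$. The embedding hypothesis $g^N\cdot x=x$ is precisely what makes this equidistribution close up quantitatively, since the orbit in the cube space is forced to lie on a periodic set and a quantitative Green--Tao equidistribution argument then identifies its averages with Haar cube averages over the sub-nilmanifold spanned by that orbit. Without this hypothesis there is no such closing-up, and indeed the $d=2$ counterexample on $X=\T$ mentioned just after Proposition~\ref{prop:12_6bis} already shows that the conclusion can fail.
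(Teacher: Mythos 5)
The paper omits its own proof, referring to Proposition 5.6 of \cite{HK3}, so I judge your argument on its merits; as written it has two genuine gaps. The decisive one is your treatment of error terms. You decompose $F$ on $X$ into cubic convolution products plus a remainder $R$ controlled only in $L^\infty$, you allow the transfer from $X$-cube averages to orbit averages to hold ``up to a uniformly small error,'' and you finish by invoking ``the $L^1$ contribution of $R$.'' But the quantity to be bounded is $\norm h_{U(d)}^*$ on $\Z_N$, uniformly in $N$, and there is no inequality bounding $\norm\cdot_{U(d)}^*$ by $\norm\cdot_\infty$ or $\norm\cdot_1$ uniformly in $N$: Corollary~\ref{cor:anti} goes only in the opposite direction, and already for $d=2$ a random-sign function on $\Z_N$ has sup norm $1$ but dual norm of order $N^{1/4}$ by~\eqref{eq:U2dual}. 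A remainder that is merely bounded (or even uniformly small) in $L^\infty$ therefore contributes an uncontrolled amount when paired with $f$ satisfying $\norm f_{U(d)}\leq 1$, whose $L^1$ norm is not controlled. Any proof must exhibit $h$ exactly (or up to an error small in the $A(d)$/dual norm itself) as an absolutely convergent sum of cubic convolution products of bounded sequences on $\Z_N$; nothing weaker suffices.

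The second gap is the reliance on quantitative equidistribution of the cube orbit, which is both unavailable uniformly in $g,x,N$ (the orbit may only fill a subnilmanifold depending on $g$ and $x$, so your constants $c$ and $k$ would inherit that dependence) and not what makes the periodic case work. The mechanism is algebraic, not ergodic: on a $(d-1)$-step nilmanifold the coordinate $x_{\bzero}$ of a Host--Kra cube is a smooth function $\Phi$ of the remaining $2^d-1$ coordinates, so $F(g^n\cdot x)=(F\circ\Phi)\bigl((g^{n+\vepsilon\cdot\vec m}\cdot x)_{\vepsilon\neq\bzero}\bigr)$ holds for every $n$ and every $\vec m\in\Z^d$, with no error at all. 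Averaging over $\vec m\in\Z_N^d$ is exact because $g^N\cdot x=x$ (this, and only this, is where the embedding hypothesis enters), and expanding $F\circ\Phi$ in an absolutely convergent series of product functions, with coefficients summable thanks to smoothness --- this is where $\CC^k$ with $k$ large is used --- exhibits $h$ exactly as an element of $A(\Z_N,d)$ with norm at most $c\norm F_{\CC^k(X)}$; Lemma~\ref{lem:norm_Dprod} then gives the dual bound, with no equidistribution input and no remainder. Two smaller points: $g^N\cdot x=x$ does not make $n\mapsto g^n\cdot x$ injective, so your ``bijection onto $Y$'' (which in any case merely relabels $\Z_N$) is unjustified; and the $d=2$ counterexample mentioned after Proposition~\ref{prop:12_6bis} concerns continuous $F$, so it does not show that the conclusion fails for smooth $F$ without periodicity --- that is precisely the open content of Question~\ref{qu:smooth_dual}.
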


The proof of this proposition is similar to that of 
Proposition 5.6 in~\cite{HK3} and so we omit it.

More generally, we can phrase these results and the resulting 
question for groups other than 
$\Z_N$.  We restrict ourselves to the case of $\T$, 
as the extension to $\T^k$ is clear.  
By the same argument used for Proposition~\ref{prop:12_6}, we have:
\begin{proposition}
\label{prop:12_6_T}
Let $X=G/\Gamma$ be a $(d-1)$-step nilmanifold,
 $x\in X$, $u$ be an element in the Lie algebra of $G$, and 
$F$ be a continuous function on  $X$.
Let $f$ be a function on $\T$ 
with $|f|\leq 1$. Assume that for some $\eta>0$ we have
$$
\Bigl|\int f(t)F\bigl(\exp(tu)\cdot  x\bigr)\,dt\Bigr|\geq\eta,
$$
where we identify $\T$ with $[0,1)$ in this integral.  
Then there exists a constant $c = c(X,F, \eta) > 0$ such that 
$$
 \norm f_{U(d)}\geq c.
$$
\end{proposition}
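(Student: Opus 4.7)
The plan is to follow the strategy of Proposition~\ref{prop:12_6} verbatim in the continuous setting of $\T$, proceeding by induction on the nilpotency step $d-1$ of the nilmanifold $X$. The base case is $d=2$, where $X$ is a $1$-step nilmanifold, i.e.\ a compact abelian Lie group. In this case $F$ admits an absolutely convergent Fourier expansion $F=\sum_\xi\wh F(\xi)\chi_\xi$, and the hypothesis becomes $\bigl|\sum_\xi\wh F(\xi)\chi_\xi(x)\wh f(\xi(u))\bigr|\geq\eta$. Since $\sum_\xi|\wh F(\xi)|<\infty$, at least one Fourier coefficient $|\wh f(\alpha)|$ is bounded below by $\eta/\norm{\wh F}_{\ell^1}$, so $\norm f_{U(2)}=\norm{\wh f}_{\ell^4}\geq \norm{\wh f}_{\ell^\infty}\geq c(X,F,\eta)$.

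For the inductive step $d\geq 3$, I would apply Cauchy--Schwarz to the squared correlation, introducing a shift variable $r\in\T$:
\begin{equation*}
\eta^2\leq \int_\T\Bigl|\int_\T f(s+r)f(s)\,\Phi_r(s)\,ds\Bigr|\,dr,
\end{equation*}
where $\Phi_r(s):=F\bigl(\exp((s+r)u)\cdot x\bigr)F\bigl(\exp(su)\cdot x\bigr)$. The key structural input, as in the derivative constructions of~\cite{HK1} and~\cite{GT2}, is that for each $r$ the function $\Phi_r$ is itself a nilsequence of step $d-2$: one passes to a quotient $Y=(X\times X)/K$, where $K$ is determined by the last nontrivial term of the central series of $G$, producing a continuous function $\wt F$ on $Y$ and an element $\wt u$ of its Lie algebra such that $\Phi_r(s)=\wt F_r(\exp(s\wt u)\cdot \wt y_r)$. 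Crucially, the pair $(Y,\wt F)$ depends only on $(X,F)$.

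By Fubini and a pigeonhole in $r$, there is a set of $r\in\T$ of measure at least $\eta^2/2$ for which the inner integral exceeds $\eta^2/2$. For each such $r$, the inductive hypothesis applied to $f\cdot f_r$ against the $(d-2)$-step nilsequence $\Phi_r$ yields $\norm{f\cdot f_r}_{U(d-1)}\geq c_1(X,F,\eta)$. Raising to the $2^{d-1}$-th power and averaging in $r$, the inductive definition
\begin{equation*}
\norm f_{U(d)}^{2^d}=\E_{r\in\T}\norm{f\cdot f_r}_{U(d-1)}^{2^{d-1}}
\end{equation*}
produces the desired lower bound $\norm f_{U(d)}\geq c(X,F,\eta)$.

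The main obstacle lies in the derivative construction: exhibiting a $(d-2)$-step nilmanifold $Y$ together with a continuous function on it that represents $\Phi_r$, uniformly in the choices of $x$ and $u$. Once this standard machinery is in place, the uniformity of the constant $c$ in $(x,u)$ is automatic from the compactness of $X$, the uniform continuity of $F$, and the continuous dependence of $\wt F_r$ on the auxiliary parameter $r$.
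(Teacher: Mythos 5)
Your overall strategy is the right one, and in fact it is the only proof the paper offers: the paper does not prove this proposition at all, but states that it follows ``by the same argument used for Proposition~\ref{prop:12_6}'', which is itself quoted from Green and Tao (\cite{GT2}, Proposition~12.6, proved along the lines of~\cite{HK1}) --- precisely the induction on the step via Cauchy--Schwarz and the reduction of a ``derivative'' of a nilsequence to a nilsequence of lower step. So in outline you and the paper agree. As a standalone argument, however, your sketch has concrete gaps.

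First, the base case is wrong as stated: a continuous function on a compact abelian Lie group need not have an absolutely convergent Fourier series, so you must first approximate $F$ uniformly by a trigonometric polynomial (with error depending only on $F$ and $\eta$). More importantly, the characters evaluated along the flow give $t\mapsto\xi(\exp(tu))=e^{2\pi i\lambda t}$ with $\lambda$ an arbitrary real number, not an integer, so the quantities you call $\wh f(\xi(u))$ are not Fourier coefficients of $f$ on $\T$; one needs an extra step, e.g.\ expanding $e^{2\pi i\lambda t}$ on $[0,1)$ in integer frequencies, whose coefficient sequence is bounded in $\ell^{4/3}$ uniformly in $\lambda$, and then applying H\"older together with~\eqref{eq:U2Parseval}. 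Second, the same non-periodicity undermines your Cauchy--Schwarz step: in $\int f(s+r)f(s)\Phi_r(s)\,ds$ the argument $s+r$ of $f$ is taken mod $1$, while $\exp((s+r)u)$ uses real addition; since $t\mapsto F(\exp(tu)\cdot x)$ is not $1$-periodic unless $\exp(u)\cdot x=x$ (exactly the embedding case the paper isolates later in the section), your $\Phi_r$ is not the function that actually arises on the wrap-around region $s\geq 1-r$. One must split the range and carry an interval cutoff through the induction; this is the interval issue the paper explicitly flags in the remark following Proposition~\ref{prop:12_6}. Third, the assertion that $\Phi_r$ is a $(d-2)$-step nilsequence is not true for a general continuous $F$: it requires first decomposing $F$ into vertical characters along the last nontrivial subgroup of the central series, treating the trivial frequency separately (that component already factors through a lower-step quotient), and only for a nontrivial vertical frequency does $F\otimes F$ become invariant under the relevant diagonal subgroup and descend to a lower-step nilmanifold, with the base point (not the function) carrying the dependence on $r$ so that the inductive constant is uniform in $r$. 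You yourself call this ``the main obstacle'' and leave it out; since it is the entire content of the inductive step, the proposal is incomplete as written --- what is missing is precisely the machinery of~\cite{GT2} and~\cite{HK1} that the paper invokes by reference.
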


By duality, Proposition~\ref{prop:12_6_T} can be rewritten as
\begin{proposition}
\label{prop:12_6_Tbis}
Let $X=G/\Gamma, x,u,F$, and $c=c(X,F,\eta)$ be as in 
Proposition~\ref{prop:12_6_T}.
Let $h$ denote the restriction of the
function $t\mapsto F\bigl(\exp(tu)\cdot x)$ to $[0,1)$, considered as a function on 
$\T$.
Then for every  $\eta>0$,  we can write
$$
h=\phi+\psi,
$$
where $\phi$ and $\psi$ are functions on $\T$ with
$\norm{\phi}_{U(d)}^*\leq c$ and  $\norm{\psi}_1\leq\eta$.
\end{proposition}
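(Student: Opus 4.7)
The plan is to mimic, verbatim, the Hahn-Banach argument used to deduce the Dual Form of the Inverse Theorem from the Inverse Theorem itself, with $\T$ replacing $\Z_N$ and Proposition~\ref{prop:12_6_T} replacing the Inverse Theorem. Fix $\eta>0$, let $c=c(X,F,\eta)$ be as in Proposition~\ref{prop:12_6_T}, and choose a parameter $\lambda=\lambda(X,F,\eta)>0$ to be determined. Consider the balanced, convex, closed subset
$$K \;=\; B_{U(\T,d)^{*}}(\lambda) \;+\; B_{L^{1}(\T)}(\eta)$$
of $L^{2^{d-1}/(2^{d-1}-1)}(\T)$, so that the desired decomposition is exactly the assertion $h\in K$.

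Suppose instead that $h\notin K$. Then by Hahn-Banach separation in the duality between $L^{2^{d-1}}(\T)$ (carrying the norm $\norm\cdot_{U(\T,d)}$) and its dual containing the anti-uniform space of Section~\ref{subsec:approx}, there is a test function $f$ on $\T$ with
$$\langle f;h\rangle \;>\; \sup_{k\in K}\langle f;k\rangle \;=\; \lambda\,\norm{f}_{U(\T,d)}\;+\;\eta\,\norm{f}_{\infty}.$$
After rescaling so that $\norm{f}_{\infty}=1$ (and reducing to the case $|f|\leq 1$ by approximation if needed), this gives in particular $|\langle f;h\rangle|>\eta$, i.e.\
$$\Bigl|\int_{0}^{1} f(t)\,F\bigl(\exp(tu)\cdot x\bigr)\,dt\Bigr|\;>\;\eta,$$
so Proposition~\ref{prop:12_6_T} forces $\norm{f}_{U(\T,d)}\geq c$. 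Substituting back into the separation inequality yields
$$\lambda c + \eta \;<\; \langle f;h\rangle \;\leq\; \norm{f}_{\infty}\,\norm{h}_{1}\;\leq\;\norm{F}_{\infty},$$
a contradiction as soon as $\lambda\geq(\norm{F}_{\infty}-\eta)/c$. For any such $\lambda$, which depends only on $(X,F,\eta)$, we conclude $h\in K$; the resulting bound $\norm{\phi}_{U(\T,d)}^{*}\leq\lambda$ is then absorbed into the constant denoted $c$ in the statement.

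The delicate point I anticipate is not the algebra but the infinite-dimensional duality set-up on $\T$: one must identify the correct predual pairing so that Hahn-Banach separation produces an $f$ for which both $\norm\cdot_{\infty}$ and $\norm\cdot_{U(\T,d)}$ are finite and to which Proposition~\ref{prop:12_6_T} literally applies, and one must check that $K$ is closed in the corresponding topology. The framework of Section~\ref{subsec:approx}, identifying the anti-uniform space as the dual of $L^{2^{d-1}}(\T)$ under $\norm\cdot_{U(\T,d)}$, supplies exactly this machinery and in particular the required weak-$*$ compactness of $B_{U(\T,d)^{*}}(\lambda)$. Once these technicalities are in place, the argument is a mechanical translation of the deduction of the Dual Form of the Inverse Theorem, as announced by the phrase ``by duality'' in the paper.
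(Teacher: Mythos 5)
Your Hahn--Banach separation argument is exactly the ``by duality'' deduction the paper intends (it gives no written proof beyond that phrase, its template being the earlier deduction of the Dual Form of the Inverse Theorem), and it is correct: the separating functional is automatically in $L^\infty$, Proposition~\ref{prop:12_6_T} applies after normalization, and the closedness of $K$ is handled by working in the $L^1$--$L^\infty$ pairing (note $K$ lives naturally in $L^1(\T)$, not in $L^{2^{d-1}/(2^{d-1}-1)}(\T)$, since the $L^1$-ball is not contained there). The only cosmetic difference is that you obtain the bound $\norm{\phi}_{U(d)}^*\leq\norm{F}_\infty/c$ rather than literally $c$, which matches the paper's loose use of $c(X,F,\eta)$ as ``some constant depending only on $X,F,\eta$.''
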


We can ask the analog of Question~\ref{qu:smooth_dual} for
the group $\T$:
\begin{question}
\label{qu:smooth_dual_T}
Let $X=G/\Gamma$ be a $(d-1)$-step nilmanifold. Does there exist 
an integer 
$k\geq 1$ and a positive constant $c$ such that for all choices of a 
function $F\in\CC^k(X)$, $u$ in the Lie algebra 
of $G$, and $x\in X$, writing $h$ for the restriction of 
the function $t\mapsto F\bigl(\exp(tu)\cdot x\bigr)$ to $[0,1)$,
considered a function on $\T$,
 we have
$$
 \norm {h}_{U(d)}^*\leq c\norm F_{\CC^k(X)}?
$$
\end{question}
Analogous to Proposition~\ref{prop:embed-ZN},
 the answer to this question is positive 
under the additional hypothesis that 
$t\mapsto\exp(tu)\cdot x$ is an 
\emph{embedding} of $\T$ in $X$, meaning that 
 $\exp(u)\cdot x=x$.

\section{Multiplicative  structure}
\subsection{Higher order Fourier Algebras}

In light of Theorem~\ref{th:k}, the
family of functions $g$ on $Z$ of the form $g=\CD_d f$ for
$f\in L^{2^k} (\mu)$ for some $k\geq d-1$ is relevant, and more generally,
cubic convolution products for functions $f_\vepsilon$, $\vepsilon\in\wt
V_d$,
belonging to $L^{2^k}(\mu)$ for some $k\geq d-1$. 
We only consider the case $k=d-1$,
as it gives rise to interesting algebras.

\begin{definition}
For an integer $d\geq 1$, define 
$A(d)$ to be the space of functions $g$ on $Z$ that can be written as
\begin{equation}
\label{eq:defAd}
 g(x)=\sum_{j=1}^\infty \CD_{d}(f_{j,\vepsilon}\colon\vepsilon\in\wt V_{d})
\end{equation}
where  all the functions $f_{j,\vepsilon}$ belong to 
$L^{2^{d-1}}(\mu)$ and
\begin{equation}
\label{eq:defAd2}
\sum_{j=1}^\infty\prod_{\vepsilon\in \wt V_{d}}
\norm{f_{j,\vepsilon}}_{2^{d-1}}<+\infty.
\end{equation}

For $g\in A(d)$, we define
\begin{equation}
\label{eq:defNormAd}
\norm g_{A(d)}=\inf 
\sum_{j=1}^\infty\prod_{\vepsilon\in \wt V_{d}}
\norm{f_{j,\vepsilon}}_{2^{d-1}},
\end{equation}
where the infimum is taken over all families of functions
$f_{j,\vepsilon}$ in $L^{2^{d-1}}(\mu)$ 
satisfying~\eqref{eq:defAd} and~\eqref{eq:defAd2}.
\end{definition}

We call $A(d)$ the \emph{Fourier algebra of order 
$d$}; we show in this section that it is a  Banach algebra.

It follows from the definitions that $A(1)$ consists of the 
constant functions with the norm $\norm{\cdot}_{A(1)}$ 
being absolute value.  
Clearly, if $Z$ is finite
and $d\geq 2$, then every function on $Z$ 
belongs to 
$A(d)$ and we can replace each series by a finite sum in the
definitions.

It is easy to check that $A(d)$ is a vector space of 
functions.

Furthermore, by~\eqref{eq:boud_DProduct} and 
Lemma~\ref{prop:continuous},
condition~\eqref{eq:defAd2} implies that the series
in~\eqref{eq:defAd} converges under the uniform norm and that 
every function in $A(d)$ is a continuous function on $Z$.
Moreover, by~\eqref{eq:boud_DProduct},
$\norm g_\infty\leq\norm g_{A(d)}$ and 
$\norm\cdot_{A(d)}$ is a norm on $A(d)$.

For every $g\in A(d)$, we have 
that $g$ belongs to that anti-uniform space of level $d$ and that 
$$\norm g_{U(d)}^*\leq\norm g_{A(d)}.$$

If $(g_n)_{n\in\N}$ is a sequence in $A(d)$ with 
$g=\sum_{n=1}^\infty\norm{g_n}_{A(d)}<+\infty$, then the series 
$\sum_{n=1}^\infty g_n$ converges under the uniform norm, the 
sum $g$ of this series belongs to $A(d)$, and the series 
converges to $g$ in $A(d)$. This shows that the space $A(d)$ endowed 
with the norm $\norm\cdot_{A(d)}$ is a Banach space.

Let $\CC(Z)$ denote the space of continuous functions 
on $Z$.  We summarize:
\begin{proposition}
\label{prop:Ad_continuous}
$A(d)$ is a linear subspace of $\CC(Z)$ and of the anti-uniform space 
of level $d$. For every $g\in A(d)$, we 
have that $\norm g_\infty\leq\norm g_{A(d)}$. The space $A(d)$ endowed 
with the norm $\norm\cdot_{A(d)}$ is a Banach space.
\end{proposition}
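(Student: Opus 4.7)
My plan is to proceed by unwinding the definition of $A(d)$ and verifying each claimed property in turn, relying on the pointwise bound from Lemma~\ref{lem:Ddf}, the continuity from Lemma~\ref{prop:continuous}, and the dual estimate from Lemma~\ref{lem:norm_Dprod}.

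First I would establish that $A(d)$ is a vector space and that $\norm\cdot_{A(d)}$ is a seminorm. Homogeneity is immediate by rescaling one of the factors $f_{j,\vepsilon}$ in each representation~\eqref{eq:defAd}. For the triangle inequality, given $g, g' \in A(d)$ with representations that realize the infimum to within $\varepsilon$, I would concatenate the two series into a single expansion of $g+g'$, yielding $\norm{g+g'}_{A(d)} \leq \norm g_{A(d)} + \norm{g'}_{A(d)} + 2\varepsilon$. Definiteness will follow once the sup-norm bound is established.

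Next I would verify the key inequality $\norm g_\infty \leq \norm g_{A(d)}$ and the continuity statement simultaneously. By Lemma~\ref{lem:Ddf}, each summand satisfies
\[
\bigl\lVert\CD_{d}(f_{j,\vepsilon}\colon \vepsilon \in \wt V_d)\bigr\rVert_\infty
\leq \prod_{\vepsilon\in\wt V_d}\norm{f_{j,\vepsilon}}_{2^{d-1}},
\]
so the summability condition~\eqref{eq:defAd2} forces the series in~\eqref{eq:defAd} to converge uniformly on $Z$. Each partial sum is continuous by Lemma~\ref{prop:continuous}, hence the uniform limit $g$ lies in $\CC(Z)$, and passing to the infimum gives $\norm g_\infty \leq \norm g_{A(d)}$. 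In particular $\norm g_{A(d)} = 0$ forces $g \equiv 0$. For the dual-norm bound, I would apply Lemma~\ref{lem:norm_Dprod} to each summand and use that $\norm\cdot_{U(d)}^{*}$ is a norm to dominate $\norm g_{U(d)}^{*}$ term by term, again taking the infimum to get $\norm g_{U(d)}^{*} \leq \norm g_{A(d)}$.

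The one part that needs slightly more care is completeness. I would use the standard criterion that a normed space is Banach if and only if every absolutely convergent series converges in norm. So I take a sequence $(g_n)$ in $A(d)$ with $S := \sum_{n\geq 1} \norm{g_n}_{A(d)} < \infty$. For each $n$ I choose a representation of $g_n$ as in~\eqref{eq:defAd} whose sum $\sum_{j}\prod_{\vepsilon}\norm{f_{j,\vepsilon}^{(n)}}_{2^{d-1}}$ is at most $\norm{g_n}_{A(d)} + 2^{-n}$; concatenating all these representations into one doubly-indexed series gives a representation whose total weight is at most $S + 1$. The partial sums of $\sum_n g_n$ converge uniformly (by the bound just proved) to a continuous function $g$, which by construction admits the concatenated representation and so lies in $A(d)$. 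Applying the same construction to the tails $\sum_{n>N}g_n$ shows $\norm{g - \sum_{n\leq N} g_n}_{A(d)} \to 0$, which gives the Banach property. The main (minor) obstacle is simply the bookkeeping that the concatenated family still satisfies~\eqref{eq:defAd2}, but this is ensured by the geometric correction $2^{-n}$.
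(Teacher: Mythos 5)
Your proposal is correct and follows essentially the same route as the paper, whose proof is the discussion preceding the proposition: the uniform bound~\eqref{eq:boud_DProduct} together with Lemma~\ref{prop:continuous} gives uniform convergence, continuity, and $\norm g_\infty\leq\norm g_{A(d)}$; Lemma~\ref{lem:norm_Dprod} gives $\norm g_{U(d)}^*\leq\norm g_{A(d)}$; and completeness is obtained, as you do, from the absolutely-convergent-series criterion by concatenating near-optimal representations. The only difference is that you spell out the bookkeeping (the $2^{-n}$ corrections and the tail estimate) that the paper leaves implicit.
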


\subsection{Tao's uniform almost periodicity norms}
In~\cite{T}, Tao introduced a sequence of norms, 
the uniform almost periodicity norms, that also play a 
dual role to the Gowers uniformity norms:
\begin{definition}[Tao~\cite{T}]
For $f\colon Z\to \C$, define 
$\norm f_{\uap^0(Z)}$ to be 
equal to $|c|$ if $f$ is equal to the constant $c$, 
and to be infinite otherwise.  For $d\geq 1$, define $\norm f_{\uap^{d+1}(Z)}$ 
to be the infimum of all constants $M> 0$ such that for all $n\in\Z$,
$$T^nf = M\E_{h\in H}(c_{n,h}g_h),
$$
for some finite nonempty set $H$, collection of functions 
$(g_h)_{h\in H}$ from $Z$ to $\C$ satisfying $\norm{g_h}_{L^\infty(Z)}\leq 1$, 
collection of functions $(c_{n,h})_{n\in Z, h\in H}$ from
$Z$ to $\C$ satisfying $\norm{c_{n,h}}_{\uap^d(Z)}\leq 1$, and a
random variable $h$ taking values in $H$.  

When the underlying group is clear, we omit it from the notation 
and write $\norm f_{\uap^d(Z)} = \norm f_{\uap^d}$.  
\end{definition}

\begin{remark}
The definition given in~\cite{T} implicitly assumes that $Z$ 
is finite; to extend to the case that $Z$ is infinite, take 
$H$ to be an arbitrary probability space and view 
the functions $g_h$ and $c_{n,h}$ as random variables.
\end{remark}

Tao shows that this defines finite norms $\uap^d$ for $d\geq 1$ 
and that the uniformly almost periodic functions of order $d$ (meaning 
functions for which the $\uap^d$ norm is bounded) form a 
Banach algebra:
$$
\norm{fg}_{\uap^d} \leq \norm{f}_{\uap^d} \norm{g}_{\uap^d}.
$$

The $\uap^{d-1}$ and $A(d)$ norms are related: both 
are algebra norms and they satisfy similar properties, such as 
$$
\norm{f}_{\uap^{d-1}} \geq \norm{f}_{U(d)}^*
$$
and 
$$
\norm{f}_{A(d)}  \geq \norm{f}_{U(d)}^*.
$$
For $d=2$, the two norms are in fact the same (an exercise in~\cite{TV} 
due to Green and Section~\ref{sec:d=2} below). 
However, in general we do not know if they 
are equal:
\begin{question}
For a function $f\colon Z\to\C$, is 
$$
\norm{f}_{A(d)} = \norm{f}_{\uap^{d-1}}
$$
for all $d\geq 2$?
\end{question}

In particular, while the $\uap$ norms satisfy 
$$
\norm f_{\uap(d-1)} \geq \norm f_{\uap(d)}
$$ 
for all $d\geq 2$, we do not know if the same inequality 
holds for the norms $A(d)$.

\subsection{The case $d=2$}
\label{sec:d=2}
We give a further description for $d=2$, relating these 
notions to the classical objects in Fourier analysis.  

We have that $\wt V_2=\{01,10,11\}$.
Every function $g$ defined  as a cubic convolution product of $f_{\vepsilon}$, 
$\vepsilon\in\wt V_{2}$, satisfies
\begin{align}
\label{eq:2_3}
\sum_{\xi\in\wh Z}|\wh{g}(\xi)|^{2/3} = & 
\sum_{\xi\in\wh Z}\prod_{\vepsilon\in\wt V_{2}}|\wh{f_{\vepsilon}}(\xi)|^{2/3}\\
\nonumber
\leq & \prod_{\vepsilon\in\wt V_{2}}\bigl(\sum_{\xi\in\wh Z}|\wh{f_{\vepsilon}}(\xi)
|^{2}\bigr)^{1/3} = 
\prod_{\vepsilon\in\wt V_{2}}\norm{f_{\vepsilon}}_{L^{2}(\mu)}^{2/3}.
\end{align}
Thus
$$
 \sum_{\xi\in\wh Z}|\wh g(\xi)|
\leq \prod_{\vepsilon\in\wt V_2}\norm{f_\vepsilon}_{L^2(\mu)}.
$$
It follows that for $g\in A(d)$, we have that
$$
\sum_{\xi\in\wh Z}|\wh g(\xi)|\leq \norm g_{A(2)}.
$$

On the other hand, let  $g$ be a continuous function on $Z$ with
$\sum_{\xi\in\wh Z}|\wh g(\xi)|$ $<+\infty$. 
This function  can be written as (in this example, we 
make an exception to our convention that all functions are 
real-valued)
$$
 g(x)=\sum_{\xi\in\wh Z}\wh g(\xi)\,\xi(x)=
\sum_{\xi\in\wh Z}\wh g(\xi)\E_{t_1,t_2\in Z}\xi(x+t_1)
 \xi(x+t_2)\overline\xi(x+t_1+t_2).
$$
It follows that $g\in A(d)$ and $\norm g_{A(2)}\leq
\sum_{\xi\in\wh Z}|\wh g(\xi)|$.

We summarize these calculations:
\begin{proposition}
The space $A(2)$ coincides with the Fourier 
algebra $A(Z)$ of $Z$:
$$
A(Z):=\bigl\{g\in\CC(Z)\colon \sum_{\xi\in\wh Z}|\wh 
g(\xi)|<+\infty\bigr\}
$$
and, for $g\in A(Z)$, $\norm g_{A(2)}=\norm g_{A(Z)}$, 
which is equal 
by definition  to the sum of this series.
\end{proposition}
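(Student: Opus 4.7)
My plan is to establish both set equality $A(2) = A(Z)$ and the norm identity $\norm g_{A(2)} = \norm g_{A(Z)}$ by combining the two explicit computations presented immediately before the proposition statement; the proof is essentially organizational.

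First, for the direction $A(2) \subseteq A(Z)$ with $\norm g_{A(Z)} \le \norm g_{A(2)}$, I would start from an arbitrary representation $g = \sum_j \CD_2(f_{j,\vepsilon}\colon\vepsilon\in\wt V_2)$ satisfying the summability condition~\eqref{eq:defAd2}. Applying the computation displayed in~\eqref{eq:2_3} to each summand gives $\sum_\xi |\wh{g_j}(\xi)| \leq \prod_{\vepsilon\in\wt V_2}\norm{f_{j,\vepsilon}}_{L^2(\mu)}$, using that the Fourier transform turns the cubic convolution product into a pointwise product and that H\"older in $\ell^{2/3}$ with Parseval converts the $\ell^{2/3}$-sum into a product of $L^2$-norms. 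Summing over $j$ and then taking the infimum over representations yields $\sum_\xi |\wh g(\xi)| \le \norm g_{A(2)}$, which simultaneously shows $g\in A(Z)$ and gives the inequality of norms in one direction.

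For the reverse direction $A(Z) \subseteq A(2)$ with $\norm g_{A(2)} \le \norm g_{A(Z)}$, I would use the identity $\xi(x) = \E_{t_1,t_2\in Z}\,\xi(x+t_1)\xi(x+t_2)\overline{\xi(x+t_1+t_2)}$ (an immediate consequence of $\xi$ being a character, since the integrand equals $\xi(x)$ identically). For $g$ with $\sum_\xi|\wh g(\xi)|<\infty$, absolute convergence of its Fourier series lets me write
\[
g(x)=\sum_{\xi\in\wh Z}\wh g(\xi)\,\CD_2(\xi,\xi,\overline\xi)(x),
\]
exhibiting $g$ as a countable sum of cubic convolution products. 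Since $\norm \xi_{L^2(\mu)} = 1$ for every character, this representation contributes exactly $\sum_\xi|\wh g(\xi)|$ to the infimum in~\eqref{eq:defNormAd}, and so $\norm g_{A(2)} \leq \norm g_{A(Z)}$.

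There is no serious obstacle here; the two main points to flag explicitly are that one must temporarily allow complex-valued functions (noted by the paper as an exception to its real-valued convention) in order to expand $g$ in characters, and that in the second step one needs absolute convergence of $\sum_\xi \wh g(\xi)\,\CD_2(\xi,\xi,\overline\xi)$ in the $A(2)$-norm, which is immediate from the bound $\norm{\CD_2(\xi,\xi,\overline\xi)}_{A(2)} \le 1$ and the completeness of $A(2)$ proved in Proposition~\ref{prop:Ad_continuous}. Combining the two inequalities gives $A(2) = A(Z)$ with matching norms.
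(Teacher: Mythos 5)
Your proof is correct and follows essentially the same route as the paper: the forward inequality comes from the computation~\eqref{eq:2_3} applied summand by summand (together with the embedding of $\ell^{2/3}$ into $\ell^{1}$), and the reverse inclusion from expanding $g$ in its absolutely convergent Fourier series and noting that each character is a cubic convolution product $\E_{t_1,t_2}\xi(x+t_1)\xi(x+t_2)\overline\xi(x+t_1+t_2)$ of functions of $L^2$-norm one. Your extra remarks (complex-valued functions, convergence in $A(2)$) match the paper's parenthetical caveat and are harmless, though the second is not strictly needed since the definition of $A(2)$ already admits countable sums satisfying~\eqref{eq:defAd2}.
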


\subsection{$A(d)$ is an algebra of functions}
\begin{theorem}
\label{th:Ad_algebra}
The Banach space $A(d)$ is invariant under  pointwise multiplication and 
$\norm\cdot_{A(d)}$ is an algebra norm, meaning that 
for all $g,g'\in A(d)$, 
\begin{equation}
\label{eq:algebra}
\quad\ \norm{gg'}_{A(d)}\leq\norm g_{A(d)}\,\norm 
{g'}_{A(d)}.
\end{equation}
\end{theorem}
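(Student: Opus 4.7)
The plan is to express $g g'$ as an integral over $Z^d$ of cubic convolution products and then bound its $A(d)$ norm by a Loomis--Whitney-type multilinear inequality. By bilinearity of pointwise multiplication and the series definition of $A(d)$, it suffices to establish
\[
\norm{\CD_d(f_\vepsilon)\cdot \CD_d(f'_\vepsilon)}_{A(d)}\leq \prod_{\vepsilon\in\wt V_d}\norm{f_\vepsilon}_{2^{d-1}}\cdot\prod_{\vepsilon\in\wt V_d}\norm{f'_\vepsilon}_{2^{d-1}}
\]
for two single cubic convolution products. I will first treat the case $f_\vepsilon,f'_\vepsilon\in L^\infty(\mu)$; the general case $f_\vepsilon,f'_\vepsilon\in L^{2^{d-1}}(\mu)$ then follows by density, since this bound forces the sequence of products of $L^\infty$ approximants to be Cauchy in the Banach space $A(d)$.

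The key computational identity is obtained by substituting $\vt=\vs+\vu$ in the double integral defining the product:
\[
\CD_d(f_\vepsilon)(x)\cdot \CD_d(f'_\vepsilon)(x)=\E_{\vu\in Z^d}\CD_d\bigl(H^{(\vu)}_\vepsilon\colon\vepsilon\in\wt V_d\bigr)(x),
\]
where $H^{(\vu)}_\vepsilon(y):=f_\vepsilon(y+\vepsilon\cdot\vu)\,f'_\vepsilon(y)$. For $L^\infty$ factors, continuity of translation in $L^{2^{d-1}}(\mu)$ combined with the bound $\norm{\CD_d(h_\vepsilon)}_{A(d)}\leq\prod_\vepsilon\norm{h_\vepsilon}_{2^{d-1}}$ (immediate from the definition of $\norm\cdot_{A(d)}$) shows that $\vu\mapsto \CD_d(H^{(\vu)}_\vepsilon)$ is continuous from the compact group $Z^d$ into the Banach space $A(d)$. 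Its Bochner integral therefore lies in $A(d)$, equals $g g'$ pointwise, and satisfies
\[
\norm{g g'}_{A(d)}\leq \int_{Z^d}\prod_{\vepsilon\in\wt V_d}\norm{H^{(\vu)}_\vepsilon}_{2^{d-1}}\,d\mu^d(\vu).
\]

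Setting $G_\vepsilon(z):=\E_y |f_\vepsilon(y+z)|^{2^{d-1}}|f'_\vepsilon(y)|^{2^{d-1}}$, we have $\norm{H^{(\vu)}_\vepsilon}_{2^{d-1}}=G_\vepsilon(\vepsilon\cdot\vu)^{1/2^{d-1}}$ and, by Fubini, $\norm{G_\vepsilon}_1=\norm{f_\vepsilon}_{2^{d-1}}^{2^{d-1}}\norm{f'_\vepsilon}_{2^{d-1}}^{2^{d-1}}$, so the required bound reduces to the Loomis--Whitney-type multilinear inequality
\[
\int_{Z^d}\prod_{\vepsilon\in\wt V_d} h_\vepsilon(\vepsilon\cdot\vu)\,d\mu^d(\vu)\leq \prod_{\vepsilon\in\wt V_d}\norm{h_\vepsilon}_{2^{d-1}},
\]
applied to $h_\vepsilon:=G_\vepsilon^{1/2^{d-1}}$. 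I would prove this inequality by induction on $d$: writing $\vu=(\vu',u_d)$ and $\vepsilon=(\vepsilon',\epsilon_d)$ and splitting $\wt V_d=A\cup B$ with $A=\wt V_{d-1}\times\{0\}$ and $B=V_{d-1}\times\{1\}$, the integral over $u_d$ of the factors indexed by $B$ is the integral of a product of $2^{d-1}$ translates of the $h_\vepsilon$ in $u_d$ and is bounded by $\prod_{\vepsilon\in B}\norm{h_\vepsilon}_{2^{d-1}}$ via the generalized H\"older inequality (all exponents equal to $2^{d-1}=|B|$); the remaining integral over $\vu'$ involves only the factors indexed by $A$ and is handled by the induction hypothesis, after upgrading $\norm\cdot_{2^{d-2}}$ to $\norm\cdot_{2^{d-1}}$ using $\mu(Z)=1$. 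I expect the main obstacle to be setting up this multilinear inequality cleanly; once it is in hand, the rest of the argument is routine.
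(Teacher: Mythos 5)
Your proposal is correct and follows essentially the same route as the paper's proof: the substitution $\vt=\vs+\vu$, expressing $gg'$ as an average over $\vu\in Z^d$ of cubic convolution products of the functions $y\mapsto f_\vepsilon(y+\vepsilon\cdot\vu)\,f'_\vepsilon(y)$, is exactly the paper's decomposition, and your ``Loomis--Whitney-type'' inequality is precisely Lemma~\ref{lem:Ddf} evaluated at a single point, which the paper re-derives in line by the same H\"older/induction scheme that you sketch. Your Bochner-integral and density arguments tidy up a step the paper treats tersely, but they are refinements of the same argument rather than a genuinely different approach.
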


\begin{proof}
Assume that 
$$g(x)= \CD_{d}(f_{\vepsilon}\colon\vepsilon\in \wt V_{d})(x) \ \text{ and } \ 
g'(x)=\CD_{d}(f'_{\vepsilon}\colon\vepsilon\in \wt V_{d})(x),
$$
where $f_\vepsilon$ and $f'_\vepsilon\in L^{2^{d-1}}(\mu)$ for
every $\vepsilon\in V_d$.  Once we show that $gg'\in A(d)$ and
\begin{equation}
\label{eq:product}
 \norm{gg'}_{A(d)}\leq\prod_{\vepsilon\in V_d}\norm {f_\vepsilon}_{2^{d-1}}
\norm{f'_\vepsilon}_{2^{d-1}},
\end{equation}
the statement of the theorem follows from the definitions of the 
space $A(d)$ and its norm.

We have
$$
 g(x)g'(x)=
\E_{\vs\in\Z^d}\Bigl(\E_{\vt\in Z^d}
\prod_{\vepsilon\in\wt V_d}
f_\vepsilon(x+\vepsilon\cdot \vt)\;
f'_\vepsilon(x+\vepsilon\cdot \vs)
\Bigr)
$$

Writing $\vu=\vs-\vt$, we have that 
\begin{multline*}
 g(x)g'(x)=\E_{\vu\in\Z^d}\Bigl(\E_{\vt\in Z^d}
\prod_{\vepsilon\in\wt V_d}
f_\vepsilon(x+\vepsilon\cdot 
\vt)\;f'_\vepsilon(x+\vepsilon\cdot \vu+\vepsilon\cdot \vt)
\Bigr)\\
=\E_{\vu\in\Z^d}\Bigl(\E_{\vt\in Z^d}\prod_{\vepsilon\in\wt V_d}
\bigl(f_\vepsilon\;.\, f'_{\vepsilon,\vepsilon\cdot \vu})(x+\vepsilon\cdot \vt)
\Bigr)
=\E_{\vu\in\Z^d} g^{(\vu)}(x),
\end{multline*}
where
$$
 g^{(\vu)}(x):= \E_{\vt\in Z^d}\prod_{\vepsilon\in\wt V_d}
\bigl(f_\vepsilon\; .\, f'_{\vepsilon,\vepsilon\cdot \vu})(x+\vepsilon\cdot \vt).
$$

Then
\begin{multline*}
 \E_{\vu\in Z^d}\prod_{\vepsilon\in\wt V_d}
\norm{f_{\vepsilon}.f'_{\vepsilon, \vepsilon\cdot \vu}}_{2^{d-1}}\\
=\E_{u_1,\dots,u_{d-1}\in Z}
\Bigl(
\prod_{\substack{\vepsilon\in\wt V_d \\ \epsilon_d=0}}
\norm{f_\vepsilon\; .\,f'_{\vepsilon,\epsilon_1u_1+\dots+\epsilon_{d-1}u_{d-1}}}_{2^{d-1}}
\qquad\strut\\
\prod_{\substack{\vepsilon\in\wt V_d \\ \epsilon_d=1}}
\Bigl(
\E_{u_d\in Z}\norm{f_\vepsilon\;.\,
f'_{\vepsilon,\epsilon_1u_1+\dots+\epsilon_{d-1}u_{d-1}+u_d}}
_{2^{d-1}}^{2^{d-1}}
\Bigr)^{1/2^{d-1}}\Bigr).
\end{multline*}

But, for all $u_1,u_2,\dots,u_{d-1}\in Z$ and every
$\vepsilon\in\wt V_d$ with $\epsilon_d=1$,
\begin{multline*}
 \E_{u_d\in Z}
\norm{ f_\vepsilon\;.\,
f'_{\vepsilon,\epsilon_1u_1+\dots+\epsilon_{d-1}u_{d-1}+u_d}}
_{2^{d-1}}^{2^{d-1}}\\
=  \E_{v\in Z}\norm{ f_\vepsilon\;.\,f'_{\vepsilon,v} }_{2^{d-1}}^{2^{d-1}}
=\norm{f_\vepsilon}_{2^{d-1}}^{2^{d-1}}\;
\norm{f'_\vepsilon}_{2^{d-1}}^{2^{d-1}}.
\end{multline*}
On the other hand,
\begin{align*}
\E_{u_1,\dots,u_{d-1}\in Z} &
\prod_{\substack{\vepsilon\in\wt V_d \\ \epsilon_d=0}}
\norm{f_\vepsilon\;.\,
f'_{\vepsilon,\epsilon_1u_1+\dots+\epsilon_{d-1}u_{d-1}}}_{2^{d-1}}
\\
\leq \prod_{\substack{\vepsilon\in\wt V_d \\ \epsilon_d=0}} &
\Bigl(
\E_{u_1,\dots,u_{d-1}\in Z}
\norm{f_\vepsilon\;.\,f'_{\vepsilon,\epsilon_1u_1+\dots+\epsilon_{d-1}u_{d-1}
}}_{2^{d-1}}^{2^{d-1}}
\Bigr)^{1/2^{d-1}}.
\end{align*}

But, for $\vepsilon\in\wt V_d$ with $\epsilon_d=0$, we have that 
$\epsilon_1,\dots,\epsilon_{d-1}$ are not all equal to $0$ 
and
\begin{align*}
 \E_{u_1,\dots,u_{d-1}\in Z}
\norm{f_\vepsilon\;.\,f'_{\vepsilon,\epsilon_1u_1+\dots+\epsilon_{d-1}u_{d-1}
}}_{2^{d-1}}^{2^{d-1}}
= & \E_{w\in Z}\norm{f_\vepsilon\;.\,f'_{\vepsilon,w}}_{2^{d-1}}^{2^{d-1}}\\
= & \norm{f_\vepsilon}_{2^{d-1}}^{2^{d-1}}\;
\norm{f'_\vepsilon}_{2^{d-1}}^{2^{d-1}}.
\end{align*}
Combining these relations, we obtain that 
$$
 \E_{\vu\in Z^d} \prod_{\vepsilon\in\wt V_d}
\norm{f\;.\,f'_{\vepsilon\cdot \vu}} _{2^{d-1}}\leq
\prod_{\vepsilon\in V_d}\norm {f_\vepsilon}_{2^{d-1}}\;
\norm{f'_{\vepsilon}}_{2^{d-1}}.
$$
Therefore, for $\mu\times\dots\times\mu$-almost every $\vu\in Z^d$ 
and for every $\vepsilon\in\wt V_d$, the function
$f_\vepsilon.f'_{\vepsilon,\vepsilon\cdot \vu}$ belongs to 
$L^{2^{d-1}}(\mu)$.  It follows that 
for $\mu\times\dots\times\mu$-almost every $\vu\in Z^d$,
the function $g^{(\vu)}$ 
belongs to $A(d)$ and that 
$$
 \E_{\vu\in Z^d}\norm{g^{(\vu)}}_{A(d)}\leq 
\prod_{\vepsilon\in V_d}\norm {f_\vepsilon}_{2^{d-1}}\;
\norm{f'_\vepsilon}_{2^{d-1}}.
$$
Since $gg'(x)=\E_{\vu \in Z^d}g^{(\vu)}(x)$, Inequality~\eqref{eq:product} follows.
\end{proof}

\subsection{Decomposable functions on $Z_d$}

Recall that $Z_{d}$ is the subset of $Z^{2^d}$ defined in~\eqref{eq:Zd} 
and the elements $\bx\in Z_{d}$ are written as 
$\bx = (x_\vepsilon\colon \vepsilon\in
V_d)$.

\begin{definition}
The space $D(d)$ of \emph{decomposable functions} consists in 
functions $F$ on $Z_d$ that can be written as
\begin{equation}
\label{eq:def_decomp}
F(\bx)= \sum_{j=1}^\infty\prod_{\vepsilon\in V_d}f_{j,\vepsilon}
(x_\vepsilon),
\end{equation}
where all the functions $f_{j,\vepsilon}$ belong to 
$L^{2^d}(\mu)$ and
\begin{equation}
\label{eq:def_decomp2}
 \sum_{j=1}^\infty\prod_{\vepsilon\in V_d}
\norm{f_{j,\vepsilon}}_{L^{2^d}(\mu)}<+\infty.
\end{equation}

For $F\in D(d)$, define
$$
 \norm F_{D(d)}=\inf\sum_{j=1}^\infty\prod_{\vepsilon\in V_d}
\norm{f_{j,\vepsilon}}_{2^d},
$$
where the infimum is taken over all families of functions 
$f_{j,\vepsilon}$ in $L^{2^d}(\mu)$ satisfying~\eqref{eq:def_decomp} 
and~\eqref{eq:def_decomp2}.
\end{definition}

By the remark 
following~\eqref{eq:boud_Norm}, a function $F\in D(d)$ belongs to $L^2(\mu_d)$ and
$$
\norm F_{L^2(\mu_d)}\leq 
\norm F_{D(d)}.
$$

Clearly, if $Z$ is finite, then every function on $Z_d$ belongs to 
$D(d)$ and in the definition, we can replace the 
series by a finite sum.  

We summarize the properties of the space $D(d)$:
\begin{proposition}
$D(d)$ is a linear subspace of $L^2(\mu_d)$ and for $F\in D(d)$, we 
have that $\norm F_{L^2(\mu_d)}\leq\norm F_{D(d)}$. The space 
$D(d)$ endowed with the norm $\norm\cdot_{D(d)}$ is a Banach space.
\end{proposition}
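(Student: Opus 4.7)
The proof parallels that of Proposition~\ref{prop:Ad_continuous} for the algebra $A(d)$, and has three ingredients: an $L^{2}(\mu_{d})$ bound on a single elementary product, linearity, and completeness via absolutely summable series. The core analytic input is Corollary~\ref{cor:boud_2d}, extended as indicated in the remark following~\eqref{eq:boud_Norm} to functions in $L^{2^{d-1}}(\mu)$.

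\medskip

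\noindent\textbf{Step 1: The $L^{2}(\mu_{d})$ bound.} For functions $f_{\vepsilon}\in L^{2^{d}}(\mu)$, $\vepsilon\in V_{d}$, note that $f_{\vepsilon}^{2}\in L^{2^{d-1}}(\mu)$. Applying Corollary~\ref{cor:boud_2d} (in its extended form) to the functions $f_{\vepsilon}^{2}$ gives
$$
\Bigl\lVert\prod_{\vepsilon\in V_{d}}f_{\vepsilon}(x_{\vepsilon})\Bigr\rVert_{L^{2}(\mu_{d})}^{2}
=\int_{Z_{d}}\prod_{\vepsilon\in V_{d}}f_{\vepsilon}^{2}(x_{\vepsilon})\,d\mu_{d}(\bx)
\leq\prod_{\vepsilon\in V_{d}}\lVert f_{\vepsilon}^{2}\rVert_{2^{d-1}}
=\prod_{\vepsilon\in V_{d}}\lVert f_{\vepsilon}\rVert_{2^{d}}^{2}.
$$
Therefore any series of the form~\eqref{eq:def_decomp} satisfying~\eqref{eq:def_decomp2} converges absolutely in $L^{2}(\mu_{d})$. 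In particular every $F\in D(d)$ lies in $L^{2}(\mu_{d})$, and passing to the infimum over all admissible decompositions one obtains $\norm F_{L^{2}(\mu_{d})}\leq\norm F_{D(d)}$.

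\medskip

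\noindent\textbf{Step 2: $D(d)$ is a normed linear space.} If $F,G\in D(d)$ with decompositions given by families $\{f_{j,\vepsilon}\}$ and $\{g_{j,\vepsilon}\}$, concatenating them yields a decomposition of $F+G$, whence $\norm{F+G}_{D(d)}\leq\norm F_{D(d)}+\norm G_{D(d)}$. For $\lambda\in\R$ with $\lambda\neq0$, multiplying one factor in each product by $\lambda$ and taking infima gives $\norm{\lambda F}_{D(d)}=|\lambda|\norm F_{D(d)}$. Positivity (if $\norm F_{D(d)}=0$ then $F=0$) follows from Step~1. Hence $\norm\cdot_{D(d)}$ is a genuine norm on the linear space $D(d)$.

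\medskip

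\noindent\textbf{Step 3: Completeness.} We use the standard criterion: a normed space is complete if every absolutely convergent series converges. Let $(F_{n})_{n\in\N}$ be a sequence in $D(d)$ with $S:=\sum_{n}\norm{F_{n}}_{D(d)}<+\infty$. For each $n$, choose a representation
$$
F_{n}(\bx)=\sum_{j=1}^{\infty}\prod_{\vepsilon\in V_{d}}f_{n,j,\vepsilon}(x_{\vepsilon})
\quad\text{with}\quad
\sum_{j=1}^{\infty}\prod_{\vepsilon\in V_{d}}\norm{f_{n,j,\vepsilon}}_{2^{d}}\leq\norm{F_{n}}_{D(d)}+2^{-n}.
$$
Reindexing the double family $(f_{n,j,\vepsilon})$ over a single index gives a candidate decomposition of $F:=\sum_{n}F_{n}$, and the sum of products of norms is bounded by $S+1$. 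By Step~1 the corresponding series converges in $L^{2}(\mu_{d})$, so the rearranged sum equals $F$ pointwise $\mu_{d}$-a.e. Thus $F\in D(d)$ with $\norm F_{D(d)}\leq S+1$. Applying the same construction to the tails $\sum_{n\geq N}F_{n}$ shows that the partial sums converge to $F$ in $D(d)$.

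\medskip

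There is no real obstacle: the only nontrivial input is the product-to-$L^{2}$ bound in Step~1, which is a direct application of the already-established Cauchy--Schwarz--Gowers inequality to squares. Steps~2 and~3 are formal, identical in spirit to the analogous arguments for $A(d)$.
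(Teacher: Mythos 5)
Your proof is correct and follows essentially the route the paper intends: the $L^{2}(\mu_{d})$ bound comes from applying the extended form of Corollary~\ref{cor:boud_2d} to the squares $f_{\vepsilon}^{2}\in L^{2^{d-1}}(\mu)$ (this is exactly what the remark following~\eqref{eq:boud_Norm} is invoked for), and the linearity and completeness arguments are the same absolutely-summable-series argument the paper carries out for $A(d)$ and leaves implicit for $D(d)$. No gaps.
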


\subsection{Diagonal translations}

\begin{definition}
For $t\in Z$, we write $t^\Delta=(t,t,\dots,t)\in Z_d$.
The map $\bx\mapsto \bx+t^\Delta$ is called the \emph{diagonal 
translation by $t$}.

Let $\CI(d)$ denote the subspace of $L^2(\mu_d)$ consisting of 
functions invariant under all diagonal translations. The orthogonal 
projection $\pi$ on $\CI(d)$ is given by
$$
 \pi F(\bx)=\E_{t\in Z}F(\bx+t^\Delta).
$$
\end{definition}

\begin{proposition}
\label{prop:proj_pi}
If $F$ belongs to $D(d)$, then $\pi F$ belongs to $D(d)$ and
$\norm{\pi F}_{D(d)}\leq\norm F_{D(d)}$. Furthermore, 
$\pi F$ is a continuous function on $Z_d$ satisfying
$\norm{\pi F}_\infty\leq \norm F_{D(d)}$.

In particular, functions $F$ belonging to $D(d)\cap\CI(d)$ are continuous 
on $Z_d$ and satisfy
$\norm{F}_\infty\leq \norm F_{D(d)}$.
\end{proposition}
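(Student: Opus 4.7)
The plan is to treat pure tensors $G(\bx)=\prod_{\vepsilon\in V_d}f_\vepsilon(x_\vepsilon)$ with $f_\vepsilon\in L^{2^d}(\mu)$ first, and then extend to general $F\in D(d)$ using the absolute convergence in the defining series. For a pure tensor, the formula
\[
\pi G(\bx)=\E_{t\in Z}\prod_{\vepsilon\in V_d}f_\vepsilon(x_\vepsilon+t)
\]
defines a function pointwise on $Z^{2^d}$. H\"older's inequality applied with all $2^d$ exponents equal to $2^d$, together with translation invariance of $\mu$, gives $|\pi G(\bx)|\leq\prod_\vepsilon\norm{f_\vepsilon}_{2^d}$. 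The standard $L^{2^d}$-continuity of the translation map $x\mapsto f_\vepsilon(\,\cdot\,+x)$, combined with a telescoping expansion of $\pi G(\bx)-\pi G(\bx')$ and one more application of H\"older, shows that $\pi G$ is continuous on $Z^{2^d}$, and hence on $Z_d$.

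For membership of $\pi G$ in $D(d)$, I consider the map $\Psi\colon Z\to D(d)$ defined by $\Psi(t)(\bx)=G(\bx+t^\Delta)=\prod_\vepsilon f_\vepsilon(x_\vepsilon+t)$. Each $\Psi(t)$ is again a pure tensor whose factors have the same $L^{2^d}$-norms as the $f_\vepsilon$ by translation invariance, so $\norm{\Psi(t)}_{D(d)}\leq\prod_\vepsilon\norm{f_\vepsilon}_{2^d}$ uniformly in $t$. The same telescoping argument shows that $\Psi$ is continuous from $Z$ into the Banach space $D(d)$, so that the Bochner integral $\int_Z\Psi(t)\,d\mu(t)$ is a well-defined element of $D(d)$ with norm at most $\prod_\vepsilon\norm{f_\vepsilon}_{2^d}$. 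Pairing this integral against any element of $L^2(\mu_d)$ and applying Fubini's theorem identifies it with $\pi G$ in $L^2(\mu_d)$. For a general decomposition $F=\sum_jG_j$ with $\sum_j\prod_\vepsilon\norm{f_{j,\vepsilon}}_{2^d}\leq\norm F_{D(d)}+\eta$, linearity of $\pi$ on $L^2(\mu_d)$ yields $\pi F=\sum_j\pi G_j$, a series that converges absolutely in $D(d)$ and uniformly on $Z_d$ by the pure-tensor estimates. Letting $\eta\to 0$ gives $\pi F\in D(d)$ with $\norm{\pi F}_{D(d)}\leq\norm F_{D(d)}$, and the continuous representative of $\pi F$ (the uniform limit of the continuous functions $\pi G_j$) satisfies $\norm{\pi F}_\infty\leq\norm F_{D(d)}$. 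The last assertion is then immediate: if $F\in D(d)\cap\CI(d)$ then $\pi F=F$, so $F$ inherits both continuity and the sup-norm bound.

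The principal difficulty is turning the pointwise superposition $\pi G(\bx)=\E_t\prod_\vepsilon f_\vepsilon(x_\vepsilon+t)$, which is a continuous average of pure tensors parametrized by $t\in Z$, into a countable sum of pure tensors as required by the definition of $D(d)$. The Bochner integration step above accomplishes this cleanly; an equivalent hands-on alternative is to approximate Haar measure on $Z$ by uniform measures on finite equidistributed sets, obtaining finite averages of $\pi G$ that are visibly finite sums of pure tensors of $D(d)$-norm at most $\prod_\vepsilon\norm{f_\vepsilon}_{2^d}$, and then to invoke completeness of $D(d)$ to extract the limit.
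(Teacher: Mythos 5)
Your proof is correct and follows essentially the same route as the paper: the paper's very terse argument rests on exactly the two ways of writing $\pi F$ that you use, namely as an average over $t$ of diagonally translated pure tensors (giving membership in $D(d)$ and the norm bound) and, for fixed $\bx$, as an integral in $t$ controlled by H\"older and the $L^{2^d}$-continuity of translation (giving continuity and the sup-norm bound). Your Bochner-integral step simply makes rigorous the point the paper leaves implicit, that an average over $t\in Z$ of elements of $D(d)$ with uniformly bounded norm again lies in $D(d)$ with the expected bound.
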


\begin{proof}
Assume that $f$ is given by~\eqref{eq:def_decomp} where the functions 
$f_{j,\vepsilon}$ belong to $L^{2^d}(\mu)$ 
and~\eqref{eq:def_decomp2} is satisfied. Then
\begin{align*}
\pi F(\bx) 
&=\E_{t\in Z}\sum_{j=1}^\infty \prod_{\vepsilon\in V_d} 
f_{j,\vepsilon,t}(x_\vepsilon)\\
&=\E_{t\in Z}\sum_{j=1}^\infty \prod_{\vepsilon\in V_d} 
f_{j,\vepsilon,x_\vepsilon}(t).
\end{align*}
The first equality gives the first part of the proposition and the second 
implies the second part.
\end{proof}

\begin{theorem}
\label{th:decomposable}
For $F\in D(d)$ and $G\in D(d)\cap\CI(d)$, 
we have that $FG$ belongs to  $D(d)$ and that 
$\norm{FG}_{D(d)}\leq\norm F_{D(d)}\norm G_{D(d)}$.

In particular, $ D(d)\cap\CI(d)$, endowed with pointwise 
multiplication and the norm $\norm\cdot_{B(d)}$, is
a Banach algebra.
\end{theorem}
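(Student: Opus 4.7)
The plan is to reduce to representing $FG$ as an average (over a diagonal translation) of manifestly decomposable functions, then control the resulting norms via a $2^d$-fold H\"older inequality. Fix $\eta>0$ and choose representations
\begin{equation*}
F(\bx)=\sum_j\prod_{\vepsilon\in V_d}f_{j,\vepsilon}(x_\vepsilon)
\quad\text{and}\quad
G(\bx)=\sum_k\prod_{\vepsilon\in V_d}g_{k,\vepsilon}(x_\vepsilon),
\end{equation*}
with $f_{j,\vepsilon},g_{k,\vepsilon}\in L^{2^d}(\mu)$, such that $\sum_j\prod_\vepsilon\norm{f_{j,\vepsilon}}_{2^d}\le\norm F_{D(d)}+\eta$ and similarly for $G$.

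The key step uses the hypothesis $G\in\CI(d)$: since $\pi G=G$, Proposition~\ref{prop:proj_pi} gives $G(\bx)=\E_{t\in Z}\sum_k\prod_\vepsilon g_{k,\vepsilon,t}(x_\vepsilon)$, and hence
\begin{equation*}
F(\bx)G(\bx)=\E_t U_t(\bx),
\qquad
U_t(\bx):=\sum_{j,k}\prod_{\vepsilon\in V_d}\bigl(f_{j,\vepsilon}\cdot g_{k,\vepsilon,t}\bigr)(x_\vepsilon).
\end{equation*}
For fixed $t$, $U_t$ formally lies in $D(d)$ with $\norm{U_t}_{D(d)}\le\sum_{j,k}\prod_\vepsilon\norm{f_{j,\vepsilon}\cdot g_{k,\vepsilon,t}}_{2^d}$, provided this sum is finite. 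To control it, apply H\"older's inequality with $2^d$ factors (one per $\vepsilon\in V_d$, each of exponent $2^d$) and then translation-invariance of $\mu$:
\begin{equation*}
\E_t\prod_\vepsilon\norm{f_{j,\vepsilon}\cdot g_{k,\vepsilon,t}}_{2^d}
\le\prod_\vepsilon\Bigl(\E_t\norm{f_{j,\vepsilon}\cdot g_{k,\vepsilon,t}}_{2^d}^{2^d}\Bigr)^{1/2^d}
=\prod_\vepsilon\norm{f_{j,\vepsilon}}_{2^d}\norm{g_{k,\vepsilon}}_{2^d}.
\end{equation*}
Summing over $j,k$ via Tonelli's theorem yields $\E_t\norm{U_t}_{D(d)}\le(\norm F_{D(d)}+\eta)(\norm G_{D(d)}+\eta)$; in particular $U_t\in D(d)$ for almost every $t$, and $t\mapsto\norm{U_t}_{D(d)}$ is integrable.

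The final step is to promote $\E_t U_t$ to a genuine element of $D(d)$ with norm bounded by $\E_t\norm{U_t}_{D(d)}$. Since $D(d)$ is a Banach space and since strong continuity of translation on $L^{2^d}(\mu)$ renders each map $t\mapsto g_{k,\vepsilon,t}$ continuous (hence $t\mapsto U_t$ Bochner measurable with integrable norm), the Bochner integral $\E_t U_t$ exists in $D(d)$ and, being pointwise equal to $FG$, realises $FG$ as an element of $D(d)$ with norm $\le\E_t\norm{U_t}_{D(d)}$. Letting $\eta\to 0$ gives the inequality $\norm{FG}_{D(d)}\le\norm F_{D(d)}\norm G_{D(d)}$. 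The Banach algebra statement follows at once: $\CI(d)$ is plainly closed under pointwise multiplication, so $D(d)\cap\CI(d)$ is closed under multiplication with submultiplicative norm, and its completeness is inherited from $D(d)$ since $\CI(d)$ is closed in $L^2(\mu_d)$ and $D(d)\hookrightarrow L^2(\mu_d)$ is continuous. The main obstacle is this last Bochner integration step --- showing that an integral over $t$ of elements of $D(d)$ is again a genuine decomposable function --- which can alternatively be handled by approximating $\E_t$ by finite Riemann sums, where each approximant is obviously in $D(d)$, and passing to the limit using completeness of $D(d)$.
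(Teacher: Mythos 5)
Your proof follows essentially the same route as the paper: using $\pi G=G$ to write $FG=\E_{t\in Z}U_t$ as an average over diagonal translations of product functions, then applying H\"older in $t$ with $2^d$ factors and translation invariance of $\mu$ to obtain the submultiplicative bound. The only differences are cosmetic: the paper first treats single product functions and sums over the decompositions afterwards, and it leaves the vector-valued integration step $\norm{FG}_{D(d)}\leq\E_{t\in Z}\norm{U_t}_{D(d)}$ implicit, which you justify (somewhat more carefully than the paper) via a Bochner-integral argument.
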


\begin{proof}
Since $\pi G=G$ when $G\in D(d)\cap \CI(d)$, it suffices to show that
for all $F,G\in D(d)$,  we have $F.\pi(G)\in D(d)$ and
\begin{equation}
\label{eq:product1}
\norm{F.\pi G}_{D(d)}\leq \norm F_{D(d)}\norm G_{D(d)}.
\end{equation}

First consider the case that  $F$ and $G$ are
 product function:
$$
F(\bx)=\prod_{\vepsilon\in V_d}f_\vepsilon(x_\vepsilon),\
G(\bx)=\prod_{\vepsilon\in V_d}g_\vepsilon(x_\vepsilon),
$$
where $f_\vepsilon$ and $g_\vepsilon\in L^{2^d}(\mu)$ for every 
$\vepsilon\in V_{d}$.
Then
$$
 (F.\pi G)(\bx)=\E_{t\in Z}\prod_{\vepsilon\in V_d}
(f_\vepsilon.g_{\vepsilon,t})(x_\vepsilon)=E_{t\in Z} H^{(t)}(\bx)\ ,
$$
where 
$$
H^{(t)}(\bx)\prod_{\vepsilon\in V_d}
(f_\vepsilon.g_{\vepsilon,t})(x_\vepsilon)\ .
$$
Furthermore,
$$
 \E_{t\in Z}\prod_{\vepsilon\in V_d}
\norm{f_\vepsilon.g_{\vepsilon,t}}_{2^d}
\leq \prod_{\vepsilon\in V_d}\Bigl(
\E_{t\in Z}\norm{f_\vepsilon.g_{\vepsilon,t}}_{2^d}^{2^d}
\Bigr)^{1/2^d}
=\prod_{\vepsilon\in V_d}\norm{f_\vepsilon}_{2^d}\norm{g_\vepsilon}_{2^d}\ .
$$
Thus for $\mu$-almost every $t\in Z$, we have that 
$f_\vepsilon.g_{\vepsilon,t}$ belongs to $L^{2^d}$ for every $\vepsilon$
and the function $H^{(t)}$  belongs to $B(d)$. 
Finally, 
$$
 \norm{F.\pi G}_{B(d)}\leq E_{t\in Z}\norm{H^{(t)}}_{B(d)}
\leq  E_{t\in Z}\prod_{\vepsilon\in V_d}
\norm{f_\vepsilon}_{2^d}\norm{g_\vepsilon}_{2^d}
$$
and the statement of the theorem follows from the definitions of the 
space $D(d)$ and its norm.
\end{proof}

\section{A result of finite approximation}

\subsection{A decomposition theorem}
For a probability space $(X,\mu)$, we assume throughout 
that it belongs to one of the two following classes:
\begin{itemize}
\item
$\mu$ is nonatomic. We refer to this case as the \emph{infinite case}.
\item 
$X$ is finite and $\mu$ is the uniform probability measure on $X$. 
We refer to this case as the \emph{finite case}.
\end{itemize}
This is not a restrictive assumption: Haar measure on a compact abelian group always falls into 
one of these two categories.

As usual, all subsets or partitions of $X$ are implicitly assumed to 
be measurable.

\begin{definition}
\label{def:almost-uniform}
Let $m\geq 2$ be an integer and let $(X_1,\dots,X_m)$ a partition of 
the probability space $(X,\mu)$. 
This partition is \emph{almost uniform} if:
\begin{itemize}
\item in the infinite case, $\mu(X_i)=1/m$ for every $i$.
\item In the finite case, $|X_i|=\lfloor |X|/m\rfloor$ or $\lceil 
|X|/m \rceil$ for every $i$.
\end{itemize}
\end{definition}

The main result of this paper is:
\begin{theorem}
\label{th:main}
Let $d\geq 1$ be an integer and let $\delta>0$.  There exists an integer 
$M=M(d,\delta)\geq 2$ and 
a constant $C=C(d,\delta)>0$ such that the following holds:
if $f_\vepsilon$, $\vepsilon\in\wt V_{d+1}$, are $2^{d+1}-1$ functions 
belonging to $L^{2^d}(\mu)$ with 
$\norm{f_\vepsilon}_{L^{2^d}(\mu)}\leq 1$ and 
$$
 \phi(x)=
 \CD_{d+1}(f_{\vepsilon}\colon\vepsilon\in \wt V_{d+1})(x),
$$
then for every $\delta>0$ there exist an almost 
uniform partition $(X_1,\dots,X_m)$ of $Z$ with $m\leq M$ 
sets, a nonnegative function $\rho$ on $Z$, 
and for $1\leq i\leq m$ and every $t\in Z$, a function 
$\phi_{i}^{(t)}$ on $Z$ such that
\begin{enumerate}
\item $\dis \norm\rho_{L^2(\mu)}\leq\delta$;
\item
$\norm{\phi_{i}^{(t)}}_\infty\leq 1$ and 
$\dis \norm{\phi_{i}^{(t)}}_{A(d)}\leq C$ for every $i$ and every $t$;
\item
\begin{equation}
\label{eq:main}
\Bigl| \phi(x+t)-\sum_{i=1}^m1_{X_i}(x)\phi_{i}^{(t)}(x)
\Bigr|\leq\rho(x)
\text{ for all }x,t\in Z.
\end{equation}
\end{enumerate}
\end{theorem}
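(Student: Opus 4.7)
The plan is to factor the cubic convolution product of level $d+1$ into an ``outer'' level-$d$ factor and a scalar ``inner'' factor $H$ on $Z^d$, apply a variant of the Szemer\'edi regularity lemma to $H$, and identify the pieces $\phi_i^{(t)}$ using the Banach algebra structure of $A(d)$ from Theorem~\ref{th:Ad_algebra}. Writing $\vt = (\vs, u) \in Z^d \times Z$ and $\vepsilon \in \wt V_{d+1}$ as $(\vepsilon', \alpha)$ with $\vepsilon' \in V_d$, $\alpha \in \{0,1\}$, the substitution $u \mapsto u-x$ in the inner integral eliminates the $x$-dependence of the $\alpha=1$ factors and yields
$$\phi(x+t) = \int_{Z^d} H(\vs)\,\prod_{\vepsilon' \in \wt V_d} f_{\vepsilon' 0}(x+t+\vepsilon'\cdot \vs)\,d\vs,$$
where $H(\vs) = \int_Z \prod_{\vepsilon' \in V_d} f_{\vepsilon' 1}(u+\vepsilon' \cdot \vs)\,du$ is independent of $x,t$ and satisfies $\|H\|_\infty \leq \prod \|f_{\vepsilon' 1}\|_{L^{2^d}} \leq 1$ by H\"older.

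Next, I apply a variant of the Szemer\'edi regularity lemma on the product space $Z^d$: for any $\delta' > 0$, there exist an almost uniform partition $(X_1,\ldots,X_m)$ of $Z$ with $m \leq M(\delta')$ and coefficients $c_{i_1,\ldots,i_d} \in [-1,1]$ such that the step function $\wt H(\vs) := \sum c_{i_1,\ldots,i_d} \prod_{k=1}^d 1_{X_{i_k}}(s_k)$ on $Z^d$ satisfies $\|H - \wt H\|_{L^2(\mu^d)} \leq \delta'$. This follows from iterating the classical energy-increment argument across the $d$ coordinates and then subdividing any overlarge cells to achieve almost uniformity at the cost of a bounded factor in $m$.

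Substituting $\wt H$ for $H$ produces a main term $\wt\phi(x+t)$, which must be rewritten in the required form $\sum_i 1_{X_i}(x)\,\phi_i^{(t)}(x)$ with $\phi_i^{(t)} \in A(d)$ of uniformly bounded norm. The idea is to insert the partition of unity $\sum_i 1_{X_i}(x) = 1$, combine the $\vs$-cell indicators with the shifted factors $f_{\vepsilon' 0}(\cdot + t)$ via changes of variables so that each resulting summand is a cubic convolution product of level $d$ built from $f_{\vepsilon' 0}(\cdot+t)$ and cell indicators of $(X_j)$, and invoke the algebra property of $A(d)$ (Theorem~\ref{th:Ad_algebra}) to recombine into $A(d)$-bounded expressions with $\|\phi_i^{(t)}\|_{A(d)} \leq C(d,\delta)$. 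A truncation at height $1$ yields $|\phi_i^{(t)}| \leq 1$; the truncation error is absorbed into $\rho$ using $|\phi| \leq 1$ from Lemma~\ref{lem:Ddf}.

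For the remainder $\phi - \wt\phi$, Cauchy-Schwarz in $\vs$ combined with Lemma~\ref{lem:Ddf} applied to $|f_{\vepsilon' 0}|^2$ yields
$$|\phi(x+t) - \wt\phi(x+t)| \leq \|H - \wt H\|_{L^2(\mu^d)} \cdot \|\CD_d(|f_{\vepsilon' 0}|^2)\|_\infty^{1/2} \leq \delta'$$
uniformly in $x,t$, so taking $\rho \equiv \delta'$ (enlarged slightly to absorb the truncation error) and choosing $\delta'$ small enough in terms of $\delta$ completes the argument. The main obstacle is the piecewise-$A(d)$ decomposition in the third step: converting an $L^2$-approximation of $H$ on $Z^d$ into a uniform-in-$t$ piecewise decomposition of $\phi(\cdot+t)$ with $A(d)$-bounded pieces requires carefully tracking how cell indicators on $\vs$ interact with the translates of $f_{\vepsilon' 0}$, and is where the algebra structure of $A(d)$ is essential.
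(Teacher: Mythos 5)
Your overall shape is close to the paper's: you separate the $\vepsilon 1$ factors into an inner average, plan to regularize it, keep the $\vepsilon'0$ factors as the level-$d$ skeleton, and control the error by Cauchy--Schwarz plus Lemma~\ref{lem:Ddf} (that last estimate is fine). But there are two genuine gaps, and they are exactly the places where the paper's proof does real work.

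First, your regularity step claims that for any bounded $H$ on $Z^d$ one gets, with $m\leq M(\delta')$ cells, a step function $\wt H$ with $\norm{H-\wt H}_{L^2(\mu^d)}\leq\delta'$, ``by iterating the classical energy-increment argument.'' Energy increment only yields a \emph{weak} approximation (smallness against bounded step functions, i.e.\ a cut-norm type bound), not an $L^2$ bound with complexity depending only on $\delta'$; for a quasirandom bounded kernel the best $L^2$ approximation by bounded-complexity step functions has error of order $1$, so the statement as you use it is false in general. It can only hold here because of the special structure of the inner factor, and upgrading weak to strong approximation is precisely the content of Lemma~\ref{lem:prod} and Proposition~\ref{th:RL2} in the paper, which require the inner factor to be \emph{decomposable}: the paper views it as $F(\bx)=\E_u\prod_{\vepsilon\in V_d}f_{\vepsilon 1}(x_\vepsilon+u)$ on the cube group $Z_d$, where it lies in $D(d)$ with $\norm F_{D(d)}\leq 1$, and runs the Regularity Lemma there (with the measure $\mu_d$). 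In your parametrization by $\vs\in Z^d$, the factors of $H(\vs)=\E_u\prod_{\vepsilon'}f_{\vepsilon'1}(u+\vepsilon'\cdot\vs)$ depend on the sums $\vepsilon'\cdot\vs$, not on single coordinates $s_k$, so $H$ is not decomposable over your coordinates and the weak-to-strong upgrade has no evident substitute.

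Second, the step you yourself flag as ``the main obstacle'' is not an implementation detail but the crux, and in your coordinates it does not come out. After substituting a step function $\wt H(\vs)=\sum c_{\vec\imath}\prod_k 1_{X_{i_k}}(s_k)$, the resulting terms $\E_{\vs}\prod_k 1_{X_{i_k}}(s_k)\prod_{\vepsilon'}f_{\vepsilon'0}(x+t+\vepsilon'\cdot\vs)$ are \emph{not} cubic convolution products: the indicators are evaluated at $s_k$, not at $x+\vepsilon'\cdot\vs$, and rewriting $1_{X_{i_k}}(s_k)$ as a function of $x+s_k$ would require an $x$-dependent translate, destroying any uniform $A(d)$ bound; moreover no partition of the $x$-variable (the factor $1_{X_i}(x)$ required by the theorem, with the \emph{same} partition) arises at all, and the algebra property of $A(d)$ does not repair this. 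The paper resolves both issues at once by doing the regularization in the cube coordinates $x_\vepsilon$ on $Z_d$: since $x=x_{\bzero}$ is itself one of the coordinates, the rectangle structure of $F_\CP$ automatically produces the factor $1_{A_{j_{\bzero}}}(x)$, and each remaining piece is a genuine cubic convolution product of the functions $1_{A_{j_\vepsilon}}\cdot f_{\vepsilon 0}(\cdot+t)$, giving $\norm{\phi_i^{(t)}}_{A(d)}\leq m^{2^d-1}$ directly (no appeal to Theorem~\ref{th:Ad_algebra} is needed), while the error term is handled by conditional expectation and Cauchy--Schwarz much as you propose. To fix your argument you would need to abandon the $\vs$-coordinates for the cube coordinates, or else supply a new proof of both the strong approximation of $H$ and the piecewise-$A(d)$ structure, neither of which is sketched.
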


Combining this theorem with an approximation result, this gives 
insight into properties of the dual norm.

\begin{remark}
In fact we show a bit more: 
each function $\phi_{i}^{(t)}$ is the sum of a bounded number of functions 
that are cubic convolution products of functions with $L^{2^{d-1}}(\mu)$ norm 
bounded by $1$.  
\end{remark}

\begin{remark}
The function $\phi$ in the statement of 
Theorem~\ref{th:main} satisfies 
$|\phi|\leq 1$ and thus $0\leq \rho \leq 2$.

Furthermore, the function $\phi$ belongs to $A(d+1)$, with 
$\norm\phi_{A(d+1)}\leq 1$. But Theorem~\ref{th:main} can not be 
extended to all functions belonging to $A(d+1)$, even for $d=1$.
\end{remark}

\begin{remark}
Theorem~\ref{th:main} holds for $d=1$, 
keeping in mind that $A(1)$ consists of constant functions and that 
$\norm\cdot_{A(1)}$ is the absolute value. 
In this case, the results can be proven directly and we sketch 
this approach.  In Section~\ref{sec:d=2}, we showed 
that the Fourier coefficients of the function $\phi$ satisfy
$$\sum_{\xi\in\wh Z}|\wh{\phi}(\xi)|^{2/3}\leq 1.
$$
Let $\psi$ be the trigonometric polynomial obtained by 
removing the Fourier coefficients in $\phi$ that are less than 
$\delta^{3}$.  The error term satisfies $\norm{\phi-\psi}_{\infty}\leq\delta$
and so the function $\rho$ in the theorem can be taken 
to be the constant $\delta$.   
There are at most $1/\delta^{2}$ characters so that $\xi$ 
such that $\wh{\psi}(\xi)\neq 0$.  
Taking a finite partition such that 
each of these characters is essentially constant 
on each set in the partition, we have that for every $t$ 
the function $\phi_{t}$ is essentially constant on each 
piece of the partition.  
\end{remark}

Before turning to the proof, we need some definitions, notation, and 
further results.  
Throughout the remainder of this section, we assume that an integer $d\geq 1$ 
is fixed, and the dependence of all constants on $d$ is 
implicit in all statements.  For notational convenience, 
we study functions belonging to $A(d+1)$ instead of 
$A(d)$.

\subsection{Regularity Lemma}
\begin{definition}
\label{def:partition}
Fix an integer $D\geq 2$. Let $(X,\mu)$ be a probability space of one 
of the two types considered in Definition~\ref{def:almost-uniform}.

Let $\nu$ be a measure on $Z^D$ such that each of its projections on 
$Z$ is equal to $\mu$.

Let $\CP$ be a partition of $Z$.
An atom of the product partition $\CP\times\ldots\times \CP$
($D$ times) of $Z^D$ is called a {\em rectangle} of $\CP$.  

A {\em $\CP$-function} on $Z^D$ is a 
function $f$ that is constant on each
rectangle of $\CP$.

For a function $F$ on $Z^D$, we define $F_\CP$ to be 
the $\CP$-function 
obtained by averaging over each rectangle with respect to the 
measure $\nu$: for every $x\in Z^D$, if $R$ is the rectangle containing 
$x$, then 
$$F_\CP(x) =
\begin{cases}
 \dis\frac{1}{\nu(R)}\int F\,d\nu & \text{ if }\nu(R)\neq 0;\\

0 & \text{ if }\nu(R)=0.
\end{cases}
$$

An $m$-step function is a $\CP$-function for some partition $\CP$ into 
at most $m$ sets. 
\end{definition}

As with $d$, we assume that the integer $D$ is fixed 
throughout and omit the explicit dependencies of the 
statements and constants on $D$.

We make use of the following version of the Regularity 
Lemma, a modification of the analytic version of 
Szemer\'edi's Regularity Lemma in~\cite{Lovasz}: 
\begin{theorem}[Regularity Lemma, revisited]
\label{th:regularity}
For every $D$ and  $\delta > 0$, there exists 
$M=M(D,\delta)$ such that if 
$(X,\mu)$ and $\nu$ are as in Definition~\ref{def:partition},
then for every function
$F$ on $Z^D$ with $|F|\leq 1$, there is an almost uniform
partition $\CP$ of $Z$ into $m\leq M$ sets  such that
for every $m$-step function  $U$ on $Z^D$ with $|U|\leq 1$, 
$$
\Bigl|\int U(F-F_\CP)\,d\nu \Bigr|\leq\delta  \ .
$$
\end{theorem}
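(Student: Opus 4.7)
The plan is to prove this by an energy-increment iteration, in the spirit of the analytic Szemerédi regularity lemma of Lovász-Szegedy. For a partition $\CQ$ of $Z$ into finitely many sets, define the \emph{energy} $\CE(\CQ) = \norm{F_\CQ}_{L^2(\nu)}^2$. The fundamental observation is that if $\CQ'$ refines $\CQ$, then $F_\CQ$ is the conditional expectation of $F_{\CQ'}$ with respect to the $\sigma$-algebra on $Z^D$ generated by the rectangles of $\CQ$, so by Pythagoras
$$\CE(\CQ') \;=\; \CE(\CQ) \,+\, \norm{F_{\CQ'} - F_\CQ}_{L^2(\nu)}^2.$$
Since $|F|\le 1$, we have $\CE(\CQ)\le 1$ throughout.

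I would construct a sequence of almost uniform partitions $\CP_0,\CP_1,\dots$ with $|\CP_k|=m_k$ iteratively. Start with $\CP_0$ any almost uniform partition of $Z$ with a small initial number of parts. At step $k$, test whether $\CP_k$ satisfies the desired conclusion with $m=m_k$. If it does, stop. If not, pick an $m_k$-step function $U$ with $|U|\le 1$, associated with some partition $\CP'$ of $Z$ with $|\CP'|\le m_k$, such that $\bigl|\int U(F-F_{\CP_k})\,d\nu\bigr|>\delta$. Let $\CQ$ be the common refinement of $\CP_k$ and $\CP'$, so that $|\CQ|\le m_k^2$ and $U$ is a $\CQ$-function. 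Then
$$\int U(F-F_{\CP_k})\,d\nu \;=\; \int U(F_\CQ-F_{\CP_k})\,d\nu,$$
and Cauchy-Schwarz together with $\norm{U}_{L^2(\nu)}\le 1$ yields $\norm{F_\CQ-F_{\CP_k}}_{L^2(\nu)}\ge\delta$, hence $\CE(\CQ)\ge\CE(\CP_k)+\delta^2$.

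Next I would refine $\CQ$ to an almost uniform partition $\CP_{k+1}$: in the finite case, split each atom of $\CQ$ into blocks of size $\lfloor|Z|/N_{k+1}\rfloor$ or $\lceil|Z|/N_{k+1}\rceil$ for an integer $N_{k+1}$ chosen large enough compared to $m_k^2$ so the result is almost uniform, and similarly in the nonatomic case using a measurable splitting. Since $\CP_{k+1}$ refines $\CQ$, the Pythagoras identity gives $\CE(\CP_{k+1})\ge\CE(\CQ)\ge\CE(\CP_k)+\delta^2$. Because the energy is bounded above by $1$, the iteration must halt after at most $\lceil 1/\delta^2\rceil$ steps, producing the desired partition. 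Tracking the size of $\CP_k$ through the recursion $m_{k+1}\le C\,m_k^2$ (for some constant absorbing the almost uniform refinement) gives an explicit, iterated-exponential bound $M=M(D,\delta)$; note that $D$ enters only implicitly, through the definition of rectangles and the space $L^2(\nu)$ on which energy is measured.

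The main obstacle, as I see it, is not the energy argument itself but the bookkeeping required to keep the refinements almost uniform while controlling the number of parts. One must carefully specify the splitting procedure so that the almost uniform refinement of $\CQ$ introduces only a controlled multiplicative blow-up in the number of parts, and so that the two cases of Definition~\ref{def:almost-uniform} can be treated in parallel. A secondary subtlety is that the class of $m$-step functions grows with $m$, so at each step the "test class" against which $\CP_k$ is measured is larger than at the previous step; this is handled automatically because the energy increment only requires the existence of \emph{some} violating $U$, not uniformity over all such $U$.
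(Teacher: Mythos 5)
Your energy-increment skeleton is sound and is in fact the same mechanism the paper uses (the Lov\'asz--Szegedy Hilbert-space lemma is just a packaged form of this iteration), but there is a genuine gap at the step ``refine $\CQ$ to an almost uniform partition $\CP_{k+1}$.'' Such a refinement does not exist in general: in the nonatomic case an almost uniform partition has all parts of measure exactly $1/N$, so a partition refining $\CQ$ would force every atom of $\CQ$ to have measure an integer multiple of $1/N$, which is false for generic atoms; in the finite case the atoms of $\CQ$ need not be expressible as unions of sets of size $\lfloor |Z|/N\rfloor$ or $\lceil |Z|/N\rceil$ (an atom smaller than $\lfloor |Z|/N\rfloor$ already blocks it). Consequently the inequality $\CE(\CP_{k+1})\geq\CE(\CQ)$, which you justify by Pythagoras and which is what makes the iteration terminate, is unjustified as stated: once $\CP_{k+1}$ is only an \emph{approximate} refinement (all but a few ``bad'' parts contained in atoms of $\CQ$), the energy can drop, and you must prove the drop is small. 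This is not bookkeeping about the number of parts, as your closing paragraph suggests, but the real content of the uniformization: one needs the hypothesis that every projection of $\nu$ equals $\mu$ to bound the $\nu$-measure of the rectangles meeting the exceptional set by $D\cdot(\text{its $\mu$-measure})$, and this is exactly where $D$ enters the quantitative choices --- your remark that ``$D$ enters only implicitly'' is a symptom of the missing step.

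The gap is repairable in two ways. Either keep your iteration but build $\CP_{k+1}$ by cutting each atom of $\CQ$ into parts of measure $1/N$ plus a remainder, collecting the remainders into ``bad'' parts, and then show $\CE(\CP_{k+1})\geq\CE(\CQ)-Dr/N$ (where $r=|\CQ|$) by comparing conditional expectations on the good product set $G^D$ with $\nu(Z^D\setminus G^D)\leq D\mu(Z\setminus G)$, absorbing the loss by running the increment with $\delta^2/2$; or do as the paper does: run the increment argument without any uniformity requirement (this is Lemma~\ref{lemma:lovasz}, giving a step function $F^*$ with $\bigl|\int U(F-F^*)\,d\nu\bigr|\leq\varepsilon$ for all $U$ in a sufficiently rich step class), and then perform a single uniformization at the end, estimating $\int U(F_\CP-F^*)\,d\nu$ on the good part by constructing a $\CP$-measurable sign function $U''$ with $\int U''(F-F_\CP)\,d\nu=0$ and invoking the bound for $U''$ against $F-F^*$. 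Without one of these arguments the proof does not close.
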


We defer the proof to Appendix~\ref{appendix:regularity}. 
In the remainder of this section, we carry out the proof 
of Theorem~\ref{th:main}.

\subsection{An approximation result for decomposable functions}
We return to our usual definitions and notation.
We fix $d\geq 1$ and apply the Regularity Lemma to the probability 
space $(Z,\mu)$, $D=2^d$ and the probability measure $\mu_d$ on 
$Z^{2^d}$.

In this section, we show an approximation result that allows 
to go from weak to strong approximations:
\begin{proposition}
\label{th:RL2}
Let $F$ be a function on $Z_d$ belonging to $D(d)$ with $\norm 
F_{D(d)}\leq 1$ and $\norm F_\infty\leq 1$.
Let $\theta >0$ and 
 $\CP$ be the partition of $Z$ 
associated to $F$ and $\theta$ by the Regularity Lemma 
(Theorem~\ref{th:regularity}).
Then there exist constants $C=C(d)>0$
and $c=c(d)>0$  such that 
$$
 \norm {F-F_\CP}_2\leq (C\theta^c+\theta)^{1/2}\ .
$$
\end{proposition}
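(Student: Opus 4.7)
The plan is to convert the Regularity Lemma's weak bound (against bounded step functions) into the sought $L^2$ bound by exploiting the structural hypothesis $F\in D(d)$. The starting point is the identity
\begin{equation*}
\norm{F-F_\CP}_{L^2(\nu)}^2 = \int F\cdot(F-F_\CP)\,d\nu,
\end{equation*}
which holds because $F_\CP$, defined by rectangle averaging against $\nu = \mu_d$, is precisely the $L^2(\nu)$-conditional expectation of $F$ onto the sigma-algebra generated by the rectangles of $\CP$, so that $F-F_\CP$ is orthogonal to every $\CP$-function. Splitting $F = U + (F-U)$ for an $m$-step function $U$ (with $m = |\CP|\leq M(2^d,\theta)$) satisfying $\norm U_\infty \leq C_1 = C_1(d)$, and applying the Regularity Lemma to the first piece and Cauchy--Schwarz to the second, yields the quadratic inequality $\norm{F-F_\CP}_2^2 \leq C_1\theta + \norm{F-U}_2\cdot\norm{F-F_\CP}_2$, which rearranges to $\norm{F-F_\CP}_2^2 \leq 2C_1\theta + 2\norm{F-U}_2^2$. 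The proposition thus reduces to producing such a $U$ with $\norm{F-U}_2^2 \leq C(d)\,\theta^{c(d)}$.

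To construct $U$ I would exploit the decomposition $F = \sum_{j=1}^\infty\prod_{\vepsilon\in V_d} f_{j,\vepsilon}$ with $\sum_j\prod_\vepsilon \norm{f_{j,\vepsilon}}_{2^d} \leq 1+o(1)$: first truncate the outer sum at some level $J$, the tail being small in $L^2(\nu)$ since $\norm{\cdot}_{L^2(\mu_d)}\leq\norm{\cdot}_{D(d)}$; then replace each surviving $f_{j,\vepsilon}$ by its truncation $f_{j,\vepsilon}\,1_{|f_{j,\vepsilon}|\leq K}$, the error being small in $L^{2^d}(\mu)$ by dominated convergence; then approximate each bounded truncated factor by a step function $h_{j,\vepsilon}$ on a single partition $\CQ$ of $Z$ with $|\CQ|\leq m$; and finally set $U$ equal to the pointwise clip to $[-1,1]$ of $\sum_{j\leq J}\prod_\vepsilon h_{j,\vepsilon}$, which does not increase the $L^2$-error since $|F|\leq 1$ and ensures $\norm U_\infty\leq 1$. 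Error propagation through products uses the estimate $\norm{\prod_\vepsilon g_\vepsilon}_{L^2(\mu_d)}\leq\prod_\vepsilon\norm{g_\vepsilon}_{2^d}$, a consequence of applying the extended inequality~\eqref{eq:boud_2d} to the squares $g_\vepsilon^2$, together with a telescoping expansion $\prod a_\vepsilon - \prod b_\vepsilon = \sum_\vepsilon(\ldots)(a_\vepsilon-b_\vepsilon)(\ldots)$ across the $2^d$ factors.

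The principal obstacle is calibrating the three parameters $J$, $K$, and $|\CQ|$ as functions of $\theta$ so that simultaneously $|\CQ|\leq M(2^d,\theta)$ and $\norm{F-U}_2\leq C\theta^{c/2}$, with $C,c$ depending only on $d$ and not on the particular decomposition of $F$. The delicate point is that for an arbitrary element of the unit ball of $L^{2^d}(\mu)$ there is no uniform rate of step-function approximation, so uniformity in $F$ must be extracted from the combined hypotheses: $\norm F_{D(d)}\leq 1$ provides quantitative control on the tail and total mass of the decomposition, while $\norm F_\infty\leq 1$ ensures that each truncation costs only in $L^2$ rather than in $L^\infty$. Balancing these trade-offs --- pushing $|\CQ|$ up to $M(2^d,\theta)$ and tuning $J$ and $K$ so that each of the three error sources is of order $\theta^{c(d)/2}$ --- is what produces the polynomial decay in $\theta$ asserted by the proposition.
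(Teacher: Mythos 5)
Your first step is fine, and in fact slightly cleaner than the paper's: since $F_\CP$ is the conditional expectation of $F$ with respect to $\nu=\mu_d$ on the $\sigma$-algebra generated by the rectangles of $\CP$, the difference $F-F_\CP$ is orthogonal to every $\CP$-function, so indeed $\norm{F-F_\CP}_{L^2(\mu_d)}^2=\int F\,(F-F_\CP)\,d\nu$ (the paper instead tests against $F_\CP$ and pays an extra $\theta$). The gap is in the second half, which carries all the content. You reduce the proposition to constructing an $m$-step function $U$ with $\norm{F-U}_{L^2(\mu_d)}\leq C(d)\,\theta^{c(d)}$, where $m$ is the number of sets of the partition produced by Theorem~\ref{th:regularity}; this reduction cannot be completed along the lines you sketch. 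First, $\norm F_{D(d)}\leq 1$ only asserts that \emph{some} decomposition $F=\sum_j\prod_\vepsilon f_{j,\vepsilon}$ has $\sum_j\prod_\vepsilon\norm{f_{j,\vepsilon}}_{2^d}$ close to $1$; it gives no rate for the decay of the tail $\sum_{j>J}$, so your cutoff $J$ cannot be chosen as a function of $d$ and $\theta$ alone. Second, truncating a factor at height $K$ costs $\norm{f_{j,\vepsilon}1_{|f_{j,\vepsilon}|>K}}_{2^d}$, and over the unit ball of $L^{2^d}(\mu)$ this tends to $0$ with no uniform rate (dominated convergence provides none); moreover $\norm F_\infty\leq 1$ is a bound on the summed product and transfers no boundedness or uniform integrability to the individual factors $f_{j,\vepsilon}$. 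Third, even granting quantitative factor approximation, the common refinement $\CQ$ you need has size roughly exponential in $J\cdot 2^d$, while the regularity guarantee only covers $m$-step functions for the specific $m\leq M$ it outputs, with no lower bound on $m$ stated. In effect the statement you reduce to --- uniform $L^2(\mu_d)$-approximation of the set $\{F\in D(d)\colon\norm F_{D(d)}\leq 1,\ \norm F_\infty\leq 1\}$ by $m$-step functions with polynomial-in-$\theta$ error --- is essentially as strong as the proposition itself (a posteriori it follows from it, with $U=F_\CP$), so the argument is close to circular.

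The paper avoids these uniformity problems by dualizing: it truncates the \emph{test} functions rather than the decomposition of $F$. Lemma~\ref{lem:prod} proves $\norm{F-F_\CP}_{D(d)}^*\leq C\theta^c$: each test factor $f_\vepsilon$ with $\norm{f_\vepsilon}_{2^d}\leq 1$ is split at a $\theta$-dependent height $\eta$, the bounded parts are expressed via the layer-cake formula as averages of products of indicator functions (step functions, to which the regularity bound applies directly), and the parts above height $\eta$ are controlled by Corollary~\ref{cor:alpha} together with a Chebyshev estimate, at cost $\eta^{-2^d+1}$; optimizing $\eta$ gives $C\theta^c$. The pairing $\int F\,(F-F_\CP)\,d\nu$ is then bounded by $\norm{F-F_\CP}_{D(d)}^*\cdot\norm F_{D(d)}$, i.e., the dual bound is applied linearly to the whole series defining $F$ at once, so no quantitative control on the tail or on the individual factors is ever required. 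If you want to salvage your plan, the fix is exactly this switch: establish the dual-norm bound on $F-F_\CP$ by truncating the test functions, instead of trying to approximate $F$ itself by a step function.
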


We first prove a result that allows us to 
pass from sets to functions:
\begin{lemma}
\label{lem:prod}
Assume that $F$ is a function  on $Z_d$ with $\norm F_\infty\leq 
1$.  Let $\theta >0$ and let $\CP$ be the partition of $Z$ 
associated to $F$ and $\theta$ by the Regularity Lemma 
(Theorem~\ref{th:regularity}).
If $f_\vepsilon$, $\vepsilon\in V_d$, are functions on $Z$ satisfying
$\norm{f_\vepsilon}_{2^d}\leq 1$ for every $\vepsilon$, then 
\begin{equation}
\label{eq:prod}
 \Bigl|\E_{\bx\in Z_d}(F-F_\CP)(\bx)
\prod_{\vepsilon\in V_d}f_\vepsilon(x_\vepsilon)\Bigr|
\leq C\theta^c,
\end{equation}
where $c=c(d)$ and $C=C(d)$ are positive constants.
\end{lemma}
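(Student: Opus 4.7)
The plan is a two-stage approximation: first truncate the $f_\vepsilon$ to put them in $L^\infty$, then approximate each truncation uniformly by a simple function so that the resulting product becomes a step function on $Z^{2^d}$ to which the Regularity Lemma applies.

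Set $G = F - F_\CP$, so $|G| \leq 2$. For a parameter $M\geq 1$ to be chosen, write $f_\vepsilon = g_\vepsilon + h_\vepsilon$ with $g_\vepsilon = f_\vepsilon\,\one_{\{|f_\vepsilon|\leq M\}}$, giving $\norm{g_\vepsilon}_\infty\leq M$ and, by Chebyshev applied to $\norm{f_\vepsilon}_{2^d}\leq 1$, $\norm{h_\vepsilon}_{2^{d-1}}\leq M^{-1}$. Expanding $\prod_\vepsilon f_\vepsilon = \sum_{S\subseteq V_d}\prod_{\vepsilon\in S}h_\vepsilon\prod_{\vepsilon\notin S}g_\vepsilon$ and applying Corollary~\ref{cor:boud_2d} to absolute values, together with $|G|\leq 2$, the contribution of every $S\neq\emptyset$ to the integral is at most $2M^{-|S|}$, since $\norm{g_\vepsilon}_{2^{d-1}}\leq\norm{f_\vepsilon}_{2^d}\leq 1$. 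Summing over the at most $2^{2^d}$ nonempty $S$ gives a truncation error of $O(M^{-1})$.

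For the main term $\int G \prod_\vepsilon g_\vepsilon\, d\mu_d$, approximate each $g_\vepsilon$ uniformly by a simple function $\psi_\vepsilon$ taking at most $K$ values with $\norm{g_\vepsilon-\psi_\vepsilon}_\infty\leq M/K$. Then $\prod_\vepsilon\psi_\vepsilon(x_\vepsilon)$ is constant on the rectangles of the common refinement of the level sets of the $\psi_\vepsilon$, which is a partition of $Z$ into at most $K^{2^d}$ sets, and is pointwise bounded by $(2M)^{2^d}$. Provided $K^{2^d}$ is dominated by the complexity $m$ of $\CP$, applying the Regularity Lemma to $\prod_\vepsilon\psi_\vepsilon/(2M)^{2^d}$ yields $\bigl|\int G\prod_\vepsilon\psi_\vepsilon\,d\mu_d\bigr|\leq (2M)^{2^d}\theta$. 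The difference $\prod_\vepsilon g_\vepsilon - \prod_\vepsilon\psi_\vepsilon$ telescopes into $2^d$ products, each with one factor $g_\vepsilon-\psi_\vepsilon$ and the others bounded by $2M$, so it is pointwise $O(M^{2^d}/K)$ and contributes $O(M^{2^d}/K)$ after integration against $G$.

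Balancing the three error sources $M^{-1}$, $M^{2^d}\theta$, and $M^{2^d}/K$ by choosing $M=\theta^{-1/(2^d+1)}$ and $K=\theta^{-1}$ yields the asserted bound $C\theta^c$ with $c=1/(2^d+1)$. The main obstacle is ensuring that the Regularity Lemma supplies a $\CP$ of complexity $m\geq K^{2^d}$: this requires either that Theorem~\ref{th:regularity} controls step functions whose complexity is polynomial in $\theta^{-1}$ independent of $|\CP|$, or that the case of small $m$ is treated separately via a direct bound using the explicit low-complexity form of $F_\CP$. Once this technical point is handled, the rest is routine bookkeeping.
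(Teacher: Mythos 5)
There is a genuine gap, and it is exactly the point you flag at the end but do not close: your main term requires the regularity estimate to hold against step functions of complexity $K^{2^d}$ with $K$ polynomial in $\theta^{-1}$ (with your choices, complexity $\theta^{-2^d}$), whereas Theorem~\ref{th:regularity} only controls $m$-step functions for $m=|\CP|$, and its statement gives no lower bound on $m$ at all (its proof produces some $m$ slightly larger than $D/\theta$, which is far below $\theta^{-2^d}$). So the inequality $\bigl|\int (F-F_\CP)\prod_\vepsilon\psi_\vepsilon\,d\mu_d\bigr|\leq (2M)^{2^d}\theta$ is not justified. The proposed fallback, ``treat small $m$ separately via a direct bound using the explicit low-complexity form of $F_\CP$,'' does not work either: when $m$ is small there is no smallness available for $F-F_\CP$ in any norm you can integrate against $\prod_\vepsilon\psi_\vepsilon$ --- indeed bounding $\norm{F-F_\CP}_2$ is precisely what Proposition~\ref{th:RL2} is trying to deduce \emph{from} this lemma, so invoking it would be circular. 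Choosing $K$ adapted to $m$ (say $K\approx m^{1/2^d}$) would also require a quantitative lower bound on $m$ that the theorem as stated does not provide.

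The paper's proof avoids this mismatch with a layer-cake device that is the idea missing from your argument. After truncating at level $\eta$ (as you do, and your treatment of the unbounded parts via Chebyshev/H\"older is fine and parallels the paper's use of Corollary~\ref{cor:alpha}), one does \emph{not} discretize the truncated functions into many-valued simple functions. Instead one writes each truncated function as an integral of indicators, $f'_\vepsilon(x)=\int_0^\eta \one_{A(\vepsilon,u)}(x)\,du$, and applies Fubini: for each fixed vector of levels $\bu$, the test function $\bx\mapsto\prod_{\vepsilon\in V_d}\one_{A(\vepsilon,u_\vepsilon)}(x_\vepsilon)$ is bounded by $1$ and is constant on rectangles of the common refinement of the $2^d$ two-set partitions, hence has complexity at most $2^{2^d}$ --- a constant depending only on $d$, not on $\theta$ (so it is an $m$-step function except in the trivial regime where $\theta$ is bounded below, where the lemma holds by enlarging $C$). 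The regularity estimate then gives $\theta$ for each $\bu$, and integrating over $\bu\in[0,\eta]^{2^d}$ costs only the factor $\eta^{2^d}$; balancing $\eta^{2^d}\theta$ against the truncation error $\eta^{-2^d+1}$ yields the bound with $c=c(d)>0$. Without this (or some equivalent way of reducing to test functions of complexity bounded in terms of $d$ alone), your argument does not go through.
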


In other words, writing $\norm\cdot_{D(d)}^*$ for the dual norm of 
the norm $\norm\cdot_{D(d)}$, we have that 
$$
\norm {F-F_\CP}_{D(d)}^*\leq C\theta^c.
$$

\begin{proof}
By construction, 
$\CP$ is  an almost uniform partition of $Z$ into $m<M(\eta)$ 
pieces and the function $F=F_\CP$ satisfies
\begin{equation}
\label{eq:regul}
|\E_{Z_d}U(F-F_\CP)|\leq\eta
\end{equation}
for every $m$-step function $U$ on $Z_d$ with $|U|\leq 1$.
We show~\eqref{eq:prod}.

By possibly changing the constant $C$, we can further assume that the 
functions $f_\vepsilon$ are all non-negative.  
Let $\eta>0$ be a parameter, with its value to be determined.  For 
$\vepsilon\in\{0,1\}^d$, set
$$
f'_\vepsilon(x)=\min\bigl(f_\vepsilon(x),\eta\bigr)
\text{ and }
f''_\vepsilon(x)=f_\vepsilon-f'_\vepsilon(x).
$$
Thus the average of~\eqref{eq:prod} can be written as a sum of 
$2^d$ averages, which we deal with separately.  

\noindent\textbf{a)} We first show that 
\begin{equation}
\label{eq:maj1}
\Bigl|\E_{\bx\in Z_d}(F-F_\CP)(\bx)
\prod_{\vepsilon\in  V_d}f'_\vepsilon(x_\vepsilon)\Bigr|\leq 
\eta^{2^d}\theta.
\end{equation}

For $u\in\R_+$, write
$$
A(\vepsilon,u)=\{x\in Z\colon f_\vepsilon(x)\leq u\}.
$$
For each $\vepsilon\in \{0,1\}^d$, we have that
$$
 f'_\vepsilon(x)=\int_0^\eta \one_{A (\vepsilon,u)}(x)\,du
$$
and so the average of the left hand side of~\eqref{eq:maj1} 
is the integral over $\bu=(u_\vepsilon\colon\vepsilon\in V_d)\in 
[0,\eta]^{2^d}$ of 
$$
 \E_{\bx\in Z_d}(F-F_\CP)(\bx)\prod_{\vepsilon\in V_d}\one_{A 
(\vepsilon,u_\vepsilon)}(x_\vepsilon).
$$
By~\eqref{eq:regul}, for each $\bu\in 
[0,\eta]^{2^d}$, the absolute value of this average is bounded by 
$\theta$.  Integrating, we have the bound~\eqref{eq:maj1}.

\medskip\noindent\textbf{b)} 
Assume now that for each $\vepsilon\in \{0,1\}^d$, the function
$g_\vepsilon$ is equal either to $f'_\vepsilon$ or to $f''_\vepsilon$, 
and that there exists $\valpha\in \{0,1\}^d$  with 
$g_\valpha=f''_\vepsilon$. We show that 
$$
 \Bigl|\E_{\bx\in Z_d}(F-F_\CP)(\bx)
\prod_{\vepsilon\in V_d}g_\vepsilon(x_\epsilon)\Bigr|
\leq 2\eta^{-2^d+1}.
$$
Since $|F-F_\CP|\leq 2$ and the functions $g_\vepsilon$ are 
nonnegative, it suffices to show that
$$
 \E_{\bx\in Z_d}
\prod_{\vepsilon\in \{0,1\}^d}g_\vepsilon(x_\vepsilon)\leq 
\eta^{-2^d+1}.
$$
By Corollary~\ref{cor:alpha}, the left hand side is bounded by
\begin{multline*}
 \prod_{\substack{\vepsilon\in V_d\\ \vepsilon\neq\valpha}}
\norm{g_\vepsilon}_{L^{2^{d-1}}(\mu)}\cdot\norm {g_\valpha}_{L^1(\mu)}
\leq \norm {g_\valpha}_{L^1(\mu)}
=\int 1_{f_\valpha>\eta}(x) f_\valpha(x)\\
\leq 
\norm{f_\valpha}_{2^d}
\mu\{x\in Z\colon f_\valpha(x)\geq\eta\}^{(2^d-1)/2^d}
\leq \eta^{-2^d+1}
\end{multline*}
and we have the statement.

\medskip\noindent\textbf{c)}  The left hand side of~\eqref{eq:prod} 
is thus bounded by 
$$\eta^{2^d}\theta+2(2^d-1)\eta^{-2^d+1}.
$$
Taking $\eta=\theta^{-1/(2^{d+1}-1)}$, we have the bound~\eqref{eq:prod}.
\end{proof}

We now use this to prove the proposition:
\begin{proof}[Proof of Proposition~\ref{th:RL2}]
Since $F$ belongs to $D(d)$ with $\norm F_{D(d)}\leq 1$, it follows
from the definition of this norm and from Lemma~\ref{lem:prod} that
$|\E_{\bx\in Z_d}(F-F_\CP)(\bx)F(\bx)|\leq C\theta^c$.

On the other hand, $F_\CP$ is an $m$-step function and by the property 
of the partition $\CP$ given by Theorem~\ref{th:regularity}, we have 
that
$|\E_{\bx\in Z_d}(F-F_\CP)(\bx)F_\CP(\bx)|\leq\theta$.
Finally,
$\E_{\bx\in Z_d}\bigl((F-F_\CP)(\bx)^2\bigr)
\leq C\theta^c+\theta$.
\end{proof}


\subsection{Proof of Theorem~\ref{th:main}}

We use the notation and hypotheses from the statement of 
Theorem~\ref{th:main}.

\subsection*{a) A decomposition}
Define $\Proj\,\,\colon L^1(\mu_d)\to L^1(\mu)$ to be the operator of 
conditional expectation. The most convenient definition of this 
operator is by duality: 
for $h\in L^\infty(\mu)$ and $H\in L^1(\mu_d)$, 
$$
\int_Z h(x)\, \Proj H(x)\,d\mu(x)=
\int_{Z_d} h(x_\bzero) H(\bx)\,d\mu_d(\bx).
$$
Recall that $\norm{\Proj H}_{L^{1}(\mu_{d}}\leq\norm H_{L^1(\mu_d)}$. 

By definition, when
$$
 H(x)=\prod_{\vepsilon\in V_d}f_\vepsilon(x_\vepsilon),
$$
where the functions $f_\vepsilon$ belong to $L^{2^{d-1}(\mu)}$, then
\begin{equation}
\label{eq:PH_Product}
 \Proj H(x)=\E_{\vt\in Z_d}\prod_{\vepsilon\in V_d} 
f_\vepsilon(x+\vepsilon\cdot\vt).
\end{equation}

For $\bx\in Z_d$, define
\begin{gather*}
G(\bx)=\bigotimes_{\vepsilon\in\wt V_d} f_{\vepsilon 0}(\bx)=
\prod_{\vepsilon\in \wt V_d}f_{\vepsilon 0}(x_\vepsilon)\\
F(\bx)
=\Bigl(\pi\bigotimes_{\vepsilon \in V_d} f_{\vepsilon 1}\Bigr)
(\bx)
=\E_{u\in Z}\prod_{\vepsilon\in V_d}
f_{\vepsilon 1}(x_\vepsilon+u).
\end{gather*}

For $x\in Z$, we have
\begin{align*}
 \phi(x)= & \E_{\vs\in Z_d}\prod_{\vepsilon\in\wt V_d}
\Bigl(
f_{\vepsilon 0}(x+\vepsilon\cdot \vs)\,\E_{u\in Z}
\prod_{\vepsilon\in V_d}f_{\vepsilon 1}(x+\vepsilon\cdot \vs+u)
\Bigr)\\
= & \Proj(G\cdot F).
\end{align*}

Recall that for $t\in Z$, $\phi_t$ is 
the function on $Z$ defined by $\phi_t(x)=\phi(x+t)$.

For $t\in Z$ and $\bx\in Z_d$, define
$$
 G_{t^\Delta}(\bx)=G(x+t^\Delta)=
\prod_{\vepsilon\in \wt V_d}f_{\vepsilon 0}(x_\vepsilon+t).
$$
Since the function $F$ is invariant under diagonal translations, for 
$x,t\in Z$ we have that
$$
 \phi_t(x)=\Proj (G_{t^\Delta}\cdot F)(x).
$$
By Proposition~\ref{prop:proj_pi}, the function $F$ belongs to $D(d)$
and $\norm F_{D(d)}\leq 1$.  Thus $\norm F_\infty\leq 1$.

Let $\delta>0$. Let $c$ and $C$ be as in Proposition~\ref{th:RL2} and 
let $\theta>0$ be such that $(C\theta^c+\theta)^{1/2}<\delta$.
Let  $\CP$ and $F_\CP$ be associated to $F$ and 
$\theta$ as in the Regularity Lemma.
Let $\CP=(A_1,\dots, A_m)$.

For $x,t\in Z$, we have that 
$$
  \phi_t(x)=\Proj (G_{t^\Delta}\cdot (F-F_\CP))+ 
\Proj(G_{t^\Delta}\cdot F_\CP)
$$
and we study the two parts of this sum separately.

\subsection*{b) Bounding the rest}
Define
$$
 \rho(x)= \bigl(\Proj(F-F_\CP)^2\bigr)^{1/2}.
$$
We have that 
$$\norm{\rho}_{2}= \norm{\Proj(F-F_{\CP})^{2}}_{L^{2}(\mu_{d})}^{1/2}
\leq\norm{(F-F_{\CP})^{2}}_{L^{1}(\mu_{d})}^{1/2} = 
\norm{F-F_\CP}_{L^{2}(\mu_{d})}\leq\delta, 
$$
where the last inequality follows from Proposition~\ref{th:RL2}.

Moreover, 
$$
\bigl| \Proj (G_{t^\Delta}\cdot (F-F_\CP))\bigr|\leq
\bigl(\Proj (G_{t^\Delta}^2)\bigr)^{1/2}\cdot
\bigl(\Proj(F-F_\CP)^2\bigr)^{1/2}\leq \rho(x)
$$
by~\eqref{eq:PH_Product} and Lemma~\ref{lem:Ddf}.

\subsection*{c) The main term}
We write elements of $\{1,\dots,m\}^{2^d}$ as
$$
 \bj=(j_\vepsilon\colon\vepsilon\in V_d).
$$
For $\bj=(j_\vepsilon\colon \vepsilon\in V_d)\in
\{1,\dots,m\}^{2^d}$, write
$$
 R_{\bj}=\prod_{\vepsilon\in V_d} A_{j_\vepsilon}.
$$
The function $F_\CP$ is equal to a constant on each 
rectangle $R_\bj$.
Let $c_\bj$ be this constant. We have that $|c_{\bj}|\leq 1$.

For $1\leq i\leq m$ and $t,x\in Z$, define
$$
 \phi_i^{(t)}(x):=\E_{\vs\in Z^d}
\sum_{\substack{ \bj\in\{1,\dots,m\}^{2^d} \\
j_\bzero = i}}
c_\bj
\prod_{\vepsilon\in \wt V_d}
1_{A_{j_\vepsilon}}(x+\vepsilon\cdot \vs).
\phi_{\vepsilon 0}(x+\vepsilon\cdot \vs).
$$
Since distinct rectangles are disjoint, it follows that 
$$
 \Bigl|\sum_{\substack{ \bj\in\{1,\dots,m\}^{2^d} \\
j_\bzero = i}}
c_\bj
\prod_{\vepsilon\in \wt V_d}
1_{A_{j_\vepsilon}}(x+\vepsilon\cdot \vs).
\phi_{\vepsilon 0}(x+\vepsilon\cdot \vs)\Bigr|
\leq 
\prod_{\vepsilon\in \wt V_d}
|\phi_{\vepsilon 0}(x+\vepsilon\cdot\vs)|.
$$
Thus
$$
 |\phi_i^{(t)}(x)|\leq 1.
$$
On the other hand, the function $\phi_i^{(t)}$ is the sum of
$m^{2^d-1}$ functions belonging to $A(d)$ with norm $\leq 1$ and thus
$$
 \norm{\phi_i^{(t)}}_{A(d)}\leq C=M^{2^d-1}.
$$

We claim that
\begin{equation}
\label{eq:FP_phiit}
P(G_{t^\Delta}\cdot\CF_\CP)=
\sum_{i=1}^m 1_{A_i}(x)\phi_i^{(t)}(x).
\end{equation}
Via the definitions, we have that 
$$
 (G_{t^\Delta}\cdot\CF_\CP)(\bx)
=\sum_{\bj\in\{1,\dots,m\}^{2^d}}
c_\bj
\prod_{\vepsilon\in \vt V_d}f_{\vepsilon 0}(x_\vepsilon)
\prod_{\vepsilon\in V_d}1_{A_{j_\vepsilon}}(x_\vepsilon).
$$
Grouping together all terms of the sum with $j_\bzero=i$ and 
using~\eqref{eq:PH_Product}, we obtain~\eqref{eq:FP_phiit}.
This completes the proof of Theorem~\ref{th:main}.
\qed

\section{Further directions}

We have carried this study of Gowers norms and associated 
dual norms in the setting of 
compact abelian groups.  This leads to a natural question: 
what is the analog of the Inverse Theorem for groups other than 
$\Z_{N}$?   What would be the generalization for other finite groups or 
for infinite groups such as the torus, or perhaps even for totally 
disconnected (compact abelian) groups?

In Section~\ref{sec:embedding}, we 
give examples of functions with small dual norm,
obtained by embedding 
in a nilmanifold.  One can 
ask if this process is general: does one obtain all 
functions with small dual norm, up to a small error in $L^{1}$ 
in this way?  
In particular, for $\Z_{N}$ this would mean that in the Inverse 
Theorem we can replace the family $\CF(d,\delta)$ by 
a family of nilsequences with ``bounded complexity'' 
that are periodic, with period $N$, meaning that 
they all come from embeddings of $\Z_N$ in a nilmanifold.

By the computations in Section~\ref{sec:d=2}, 
we see a difference between $A(2)$ and the dual functions:
the cubic convolution product 
$f$ of  functions belonging to $L^{2}(\mu)$ 
satisfies $\sum|\wh{f}|^{2/3}< \infty$,  while $A(2)$ is the 
family of functions $f$ such that $\sum|\wh{f}(\xi)|< +\infty$. 
It is natural to ask what analogous distinctions are for 
$d> 2$.

\appendix
\section{Proof of the regularity lemma}
\label{appendix:regularity}

We make use of the following version of the Regularity Lemma 
in a Hilbert space introduced in~\cite{Lovasz}:

\begin{lemma}[Lovasz and Szegedy~\cite{Lovasz}]
\label{lemma:lovasz}
Let $K_1, K_2, \ldots$ be arbitrary nonempty subsets of a Hilbert 
space $\CH$.  Then for every $\varepsilon > 0$ and $f\in\CH$, there 
exists 
$k\leq \lceil 1/\varepsilon^2\rceil $ and  $f_i\in K_i$, $i=1, 
\ldots, k$ and $\gamma_1, \ldots, \gamma_k\in \R$ such that for every 
$g\in K_{k+1}$, 
$$
|\langle g, f-(\gamma_1f_1+\ldots + \gamma_kf_k)\rangle| \leq 
\varepsilon\cdot\nnorm g \cdot \nnorm f \ .
$$
\end{lemma}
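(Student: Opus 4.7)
The plan is a greedy energy-increment argument. Initialize $h_0 = 0$ and $V_0 = \{0\}$. At step $i \geq 1$, ask whether some $g \in K_i$ satisfies $|\langle g, f - h_{i-1}\rangle| > \varepsilon \nnorm g\, \nnorm f$. If yes, select such a $g$, call it $f_i$, let $V_i = V_{i-1} + \R f_i$, and define $h_i$ to be the orthogonal projection of $f$ onto $V_i$. If no such $g$ exists, stop with $k = i-1$. The output $h_k$ lies in $V_k$ so has the form $\gamma_1 f_1 + \ldots + \gamma_k f_k$ for some $\gamma_i \in \R$, and the stopping condition at step $k+1$ is exactly the required inequality for all $g \in K_{k+1}$.

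What remains is to show the procedure halts within $\lceil 1/\varepsilon^2 \rceil$ steps. Since $V_{i-1} \subset V_i$ and $h_{i-1}$, $h_i$ are the corresponding orthogonal projections of $f$, Pythagoras gives $\nnorm{f - h_{i-1}}^2 = \nnorm{f - h_i}^2 + \nnorm{h_i - h_{i-1}}^2$. Writing $f_i^\perp$ for the component of $f_i$ orthogonal to $V_{i-1}$, the increment $h_i - h_{i-1}$ is the projection of $f$ onto the line $\R f_i^\perp$, and using $f - h_{i-1} \perp V_{i-1}$ one checks that $\langle f, f_i^\perp \rangle = \langle f - h_{i-1}, f_i\rangle$. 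Hence
$$\nnorm{h_i - h_{i-1}}^2 = \frac{|\langle f - h_{i-1}, f_i\rangle|^2}{\nnorm{f_i^\perp}^2} \;\geq\; \frac{|\langle f - h_{i-1}, f_i\rangle|^2}{\nnorm{f_i}^2} \;>\; \varepsilon^2 \nnorm f^2,$$
the final inequality coming from the selection criterion for $f_i$. Summing telescopically, $\nnorm{f - h_k}^2 \leq \nnorm f^2 - k \varepsilon^2 \nnorm f^2$, and nonnegativity of the left side forces $k \leq 1/\varepsilon^2$, hence $k \leq \lceil 1/\varepsilon^2 \rceil$.

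The main subtlety, and really the only nonroutine step, is the passage from $\nnorm{f_i^\perp}^2$ to $\nnorm{f_i}^2$ in the denominator above: this is what makes the energy gain $\varepsilon^2 \nnorm f^2$ insensitive to how skew each newly selected $f_i$ is with respect to the previously chosen directions. (A minor edge case: if $f_i$ already lay in $V_{i-1}$, then $f_i^\perp = 0$ and $\langle f - h_{i-1}, f_i\rangle = 0$, contradicting the violation that triggered its selection; so the fraction is always well-defined.) Everything else is bookkeeping.
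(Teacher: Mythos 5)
Your proof is correct. The paper itself does not prove this lemma --- it is quoted from Lov\'asz and Szegedy --- and your argument is essentially the standard one from that source: a greedy energy-increment iteration in which each violating $g\in K_{i}$ forces the squared distance from $f$ to the current approximation to drop by more than $\varepsilon^{2}\nnorm f^{2}$, so the process stops within $\lceil 1/\varepsilon^{2}\rceil$ steps. The only cosmetic difference is that you project onto the full span $V_i$ of the previously chosen vectors rather than performing the rank-one correction $h_{i-1}+\langle f_i,f-h_{i-1}\rangle f_i/\nnorm{f_i}^{2}$ used in the original; your comparison $\nnorm{f_i^{\perp}}\leq\nnorm{f_i}$ shows the full projection decreases the energy at least as much, so the same counting applies, and your handling of the degenerate case $f_i\in V_{i-1}$ is also correct.
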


For the proof of Theorem~\ref{th:regularity}, we follow the 
proof of the strong form of the Regularity Lemma in~\cite{Lovasz}.

\begin{proof}[Proof of Theorem~\ref{th:regularity}]
We only consider the infinite case only, as the proof in the finite 
case is similar.

Choose a sequence  of integers $(1)< s(2)< \ldots$ such that 
$$
(s(1)s(2)\ldots s(i))^{2} < s(i+1)$$ for each $i\in\N$ 
and such that $D/\varepsilon<s(1)$.

Let $\CQ$ be a partition of $Z$ into  at most $s(i)$ sets and 
let $K_i$ consist of $\CQ$-functions.

By Lemma~\ref{lemma:lovasz}, 
there exists $k\leq \lceil 1/\varepsilon ^2\rceil $ and there exists an
$s(1)\ldots s(k)$-step function  $F^*$  such that 
\begin{equation}
\label{eq:bound1}
\Bigl|\int U(F-F^*)\, d\nu\Bigr|\leq  \varepsilon
\end{equation}
for any $s(k+1)$-step function $U$.   Choose $m$ with $D/\epsilon 
<m<s(k+1)$ and refine the partition defining $F^*$ into a partition 
$\CS=\{S_1,\dots,S_m\}$ into $m$ sets. Then 
$F^*$ is a $\CS$-function and the
bound~\eqref{eq:bound1} 
remains valid for every $m$-step function $U$.

Partition each set $S_i$ into
subsets of measure $1/m^2$ and a remainder set of measure 
smaller than $1/m^2$. 
Take the union of all these remainder sets and partition this union into sets 
of measure $1/m^2$. Thus we obtain a partition $\CP=\{A_1,\dots,A_{m^2}\}$
of $Z$ into $m^2$ sets of equal measure.

At least $m^2-m$ of these $m^2$ sets are \emph{good}, 
meaning that the set is included in some set of the partition $\CS$.
Let $G$ denote the union of these good sets and call it the 
\emph{good part} of $Z$.
We have that
$$
 \nu\bigl(Z^D\setminus G^D\bigr)\leq D/m\leq\varepsilon\ .
$$

We claim that if $U$ is an $m$-step function with $|U|\leq 1$, then 
$$
\Bigl\vert \int U(F-F_\CP)\,d\nu
\Bigr\vert \leq 4\varepsilon \ .
$$
To show this, set $U'=\one_{G}\cdot U$. Then 
$$
 \Bigl\vert \int(U-U')(F-F_\CP)\,d\nu\Bigr|
\leq 2\int|U-U'|\,d\nu \leq 2\varepsilon\ .
$$
Moreover,  $U'$ is an $m$-step function with $|U'|\leq 1$ and by 
hypothesis,
$$
\Bigl|  \int U'(F-F^*)\,d\nu\Bigr|\leq\varepsilon
$$
and we are reduced to showing that
$$
 \Bigl\vert\int U'(F^*-F_\CP)\,d\nu
\Bigr\vert \leq \varepsilon \ .
$$

Instead, assume that 
$$
 \int U'(F^*-F_\CP)\,d\nu
 > \varepsilon 
$$
and we derive a contradiction (the opposite bound is proved in the 
same way).

Define a new function $U''$ on $Z^D$.   Set $U''=0=U'$ outside 
$G^D$.  
Let $R$ be a product of good sets. The functions $F^*$ and $F_\CP$ 
are constant on $R$ and thus the function $F^*-F_\CP$ is constant on 
$R$. Define $U''$ on $R$ to be equal to $1$ if this constant is 
positive and to be $-1$ if this constant is negative. Then 
$U''(F^*-F_\CP)\geq U'(F-F_\CP)$ on $R$
and so 
$$
 \int U''(F^*-F_\CP)\,d\nu
 \geq \int U'(F^*-F_\CP)\,d\nu>  \varepsilon \ .
$$
On the other hand, $U''$ is a $\CP$-function and so by definition of 
$F_\CP$,  $\int U''(F-F_\CP)\,d\nu=0$ and 
$$
\int U''(F^*-F)\,d\nu
 >  \varepsilon  \ .
$$
But $U''$ is an $m$-step function with $|U''|\leq 1$ and this integral 
is $<\varepsilon$ by~\eqref{eq:bound1}, leading to a contradiction.

\end{proof}

\end{document}